\definecolor{darkblue}{rgb}{0,0,0.8}
\newtheorem{theorem}{Theorem}[section]
\newtheorem{conjecture}[theorem]{Conjecture}
\newtheorem{lemma}[theorem]{Lemma}
\newtheorem{proposition}[theorem]{Proposition}
\newtheorem{corollary}[theorem]{Corollary}
\newtheorem{problem}[theorem]{Problem}
\theoremstyle{definition}
\newtheorem{remark}[theorem]{Remark}
\newtheorem{example}[theorem]{Example}
\newtheorem*{Conjecture}{Conjecture}
\newtheorem*{Definition}{Definition}
\DeclareMathOperator{\SL}{\mathrm{SL}}
\DeclareMathOperator{\Wr}{wr}
\DeclareMathOperator{\Aut}{Aut}
\DeclareMathOperator{\Sym}{Sym}
\DeclareMathOperator{\Alt}{Alt}
\newcommand{\la}{\langle}
\newcommand{\ra}{\rangle}
\renewcommand{\a}{\alpha}
\renewcommand{\b}{\beta}
\newcommand{\e}{\epsilon}
 \renewcommand{\L}{\Lambda}
\renewcommand{\l}{\lambda} \renewcommand{\O}{\Omega}
\newcommand{\imod}[1]{\allowbreak\mkern4mu({\operator@font mod}\,\,#1)}
 \renewcommand{\to}{\rightarrow}
\newcommand{\leqs}{\leqslant}
\newcommand{\geqs}{\geqslant}
\newcommand{\what}{\widehat} 
 \newcommand{\vs}{\vspace{3mm}}
\begin{document}

\title{On the Saxl graph of a permutation group}

\author{Timothy C. Burness}
\address{T.C. Burness, School of Mathematics, University of Bristol, Bristol BS8 1TW, UK}
\email{t.burness@bristol.ac.uk}

\author{Michael Giudici}

\address{M. Giudici, Center for the Mathematics of Symmetry and Computation, Department of Mathematics and Statistics, The University of Western Australia, 35 Stirling Highway, Crawley WA 6009, Australia}
\email{michael.giudici@uwa.edu.au}

\date{\today}

\begin{abstract}
Let $G$ be a permutation group on a set $\Omega$. A subset of $\Omega$ is a base for $G$ if its pointwise stabiliser in $G$ is trivial. In this paper we introduce and study an associated graph $\Sigma(G)$, which we call the Saxl graph of $G$. The vertices of $\Sigma(G)$ are the points of $\Omega$, and two vertices are adjacent if they form a base for $G$. This graph encodes some interesting properties of the permutation group. We investigate the connectivity of $\Sigma(G)$ for a finite transitive group $G$, as well as its diameter, Hamiltonicity, clique and independence numbers, and we present several open problems. For instance, we conjecture that if $G$ is a primitive group with a base of size $2$, then the diameter of $\Sigma(G)$ is at most $2$. Using a probabilistic approach, we establish the conjecture for some families of almost simple groups. For example, the conjecture holds when $G=S_n$ or $A_n$ (with $n>12$) and the point stabiliser of $G$ is a primitive subgroup. In contrast, we can construct imprimitive groups whose Saxl graph is disconnected with arbitrarily many connected components, or connected with arbitrarily large diameter.
\end{abstract}

\maketitle

\section{Introduction}\label{s:intro}

Let $G \leqslant \Sym(\Omega)$ be a permutation group on a set $\Omega$. Recall that a subset of $\Omega$ is a \emph{base} for $G$ if its pointwise stabiliser in $G$ is trivial. The \emph{base size} of $G$, denoted by $b(G)$, is the minimal cardinality of a base for $G$. Clearly, $b(G)=1$ if and only if $G$ has a regular orbit (that is, $G$ has a trivial point stabiliser). Similarly, $b(G)=2$ if and only if a point stabiliser has a regular orbit on $\O$; in this situation, we say that $G$ is a \emph{base-two} group. 

Bases for finite permutation groups have been studied indirectly since the early days of group theory in the nineteenth century, finding a wide range of applications. For example, if $|\O|=n$ then the definition of a base immediately implies that $|G| \leqs n^{b(G)}$ and thus upper bounds on $b(G)$ can be used to bound the order of $G$ in terms of its degree (see \cite{Babai}, for example). More recently, bases have been used extensively in the computational study of permutation groups (see \cite[Chapter 4]{Seress} for more details). We refer the reader to the excellent survey article \cite{BC} for more background on bases and their connections to other invariants of groups and graphs.

In recent years, there have been significant advances in our understanding of base sizes of primitive permutation groups. Much of this interest stems from an influential conjecture of Cameron and Kantor \cite{CK}, which asserts that there is an absolute constant $c$ such that $b(G) \leqs c$ for all so-called \emph{non-standard} finite almost simple primitive groups $G$. Roughly speaking, an almost simple primitive group $G \leqs {\rm Sym}(\O)$ with socle $T$ is \emph{standard} if $T=A_n$ and $\O$ is a set of subsets or partitions of $\{1, \ldots, n\}$, or $T$ is a classical group and $\O$ is a set of subspaces of the natural module for $T$ (otherwise, $G$ is \emph{non-standard}). This conjecture was proved by Liebeck and Shalev \cite{LSh99} using probabilistic methods. In later work \cite{Bur7, BGS1, BLS, BOW}, it was shown that $c=7$ is the best possible constant. 

In fact, it is possible to show that $b(G)=2$ for many non-standard groups $G$ and it is natural to seek a complete classification. This ambitious project was initiated by Jan Saxl in the broader context of finite primitive permutation groups and we briefly recall some of the main results. By combining the main theorems in \cite{BGS1,James2006}, one obtains a complete classification of the base-two primitive actions of symmetric and alternating groups. There is an analogous result for almost simple primitive groups with a sporadic socle in \cite{BOW,orb} and partial results in \cite{BGS, BGS2, James} for classical groups. For more general primitive groups, see \cite{Fawcett1, Fawcett2} for results on diagonal-type and twisted wreath product groups. In the context of a primitive affine group $G = VH$, where $H \leqs {\rm GL}(V)$ is irreducible, we get $b(G)=2$ if and only if $H$ has a regular orbit on $V$. In this setting, the base-two problem can be viewed as a natural problem in representation theory (indeed, it is closely related to the famous $k(GV)$-problem \cite{Schmid}, which establishes part of a conjecture of Brauer on defect groups of blocks). For example, see \cite{FMOW,FOS} for some recent results on base-two affine groups with a quasisimple point stabiliser.

\vs

We introduce the following graph associated to a base-two permutation group.

\begin{Definition}
Let $G \leqs {\rm Sym}(\O)$ be a permutation group on a set $\O$. The \emph{Saxl graph} of $G$, denoted by $\Sigma(G)$, has vertex set $\O$ and two vertices are adjacent if and only if they form a base for $G$.
\end{Definition}

The main goal of this paper is to investigate the Saxl graphs of finite transitive permutation groups $G$. In Section \ref{s:obs}, we start by recording some elementary properties of $\Sigma(G)$ and we present several examples. Next, in Sections \ref{s:val} and \ref{s:con} we study the valency and connectivity of $\Sigma(G)$. For example, in Proposition \ref{p:prime} we classify the transitive groups such that $\Sigma(G)$ has prime valency, and for primitive groups we address the existence of Eulerian cycles in Propositions \ref{p:eul} and \ref{p:eulprim}. In Section \ref{ss:prob}, we develop a probabilistic approach to compute bounds on the valency of $\Sigma(G)$ (the main result is Lemma \ref{l:key}) and we study some related asymptotics in Section \ref{ss:asymp}. 

It is easy to see that the Saxl graph of a base-two primitive permutation group is connected. In fact, it appears that these graphs are highly connected and we propose the following striking conjecture (see Conjecture \ref{c:diam2}).

\begin{Conjecture}
\emph{Let $G$ be a finite primitive permutation group with $b(G)=2$ and Saxl graph $\Sigma(G)$. Then any two vertices in $\Sigma(G)$ have a common neighbour.}
\end{Conjecture}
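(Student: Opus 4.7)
The plan is to reduce the conjecture to a valency estimate for $\Sigma(G)$ and then attack the latter by a fixed-point-ratio argument in the spirit of Liebeck--Shalev \cite{LSh99}. By transitivity of $G$ on $\O$, the graph $\Sigma(G)$ is vertex-transitive with some common valency $v$, and for any two vertices $\alpha, \beta$ the inclusion-exclusion bound yields at least $2v - |\O|$ common neighbours. Thus it suffices, in many cases, to prove $v > |\O|/2$. Setting $H = G_\alpha$ and $P(G) = (|\O| - v)/|\O|$, a point $\gamma$ is a non-neighbour of $\alpha$ iff some $1 \neq x \in H$ fixes $\gamma$, and the standard identity $|\Fix(x,\O)|/|\O| = |x^G \cap H|/|x^G|$ together with a union bound over $H \setminus \{1\}$ gives
$$P(G) \leqs \sum_{C} \frac{|C \cap H|^{2}}{|C|}$$
summed over the nontrivial conjugacy classes $C$ of $G$. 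The target is to push this below $1/2$; this is a natural strengthening of the Liebeck--Shalev criterion $<1$ which already implies $b(G)=2$, and is presumably what Lemma \ref{l:key} is designed to deliver.

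For the estimation itself, in the almost simple case with socle $T$ one partitions the nontrivial classes of $G$ according to whether they meet $T$ or lie in $G \setminus T$, bounds $|C \cap H|$ via the maximal subgroup structure of $H$ (which is maximal by primitivity), and plugs in sharp fixed-point ratio bounds such as those developed for non-standard actions in \cite{Bur7, BGS1, BGS2}. For the families highlighted in the introduction, for instance $G = S_n$ or $A_n$ with $H$ a primitive subgroup of degree $n$, Jordan's theorem forces every nontrivial element of $H$ to have support of size a positive proportion of $n$, and standard fixed-point estimates on $k$-subsets and partitions then yield $P(G) \to 0$ as $n \to \infty$; a finite check deals with small $n$, in the same spirit as \cite{James2006}.

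The main obstacle is that there exist base-two primitive groups whose Saxl graph has valency well below $|\O|/2$ --- typically when $H$ has only a small number of regular orbits on $\O$ and many short non-regular orbits --- and for these the pigeonhole shortcut above fails. In such cases one must argue structurally: if $\alpha, \beta$ were to have no common neighbour, then every $\gamma \in \O$ would be stabilised nontrivially by $G_\alpha$ or $G_\beta$, so $\O$ is covered by the fixed-point sets of nontrivial elements drawn from $G_\alpha \cup G_\beta$. One then hopes to combine the primitivity of $G$ (maximality of $G_\alpha$ and $G_\beta$) with the structure of the orbitals of $G$, possibly via the theory of maximal factorisations of almost simple groups, to derive a contradiction. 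This structural step is, in my view, the real crux of the general conjecture and appears to require genuinely new ideas beyond the probabilistic toolkit, which is no doubt why the authors state the diameter-two property only as a conjecture.
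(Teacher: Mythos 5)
This statement is Conjecture \ref{c:diam2}: it is not proved in the paper, so there is no complete proof to compare against, and your proposal rightly stops short of claiming one. The partial strategy you outline is essentially the one the authors use for the families they do verify: your inclusion--exclusion reduction to valency $>|\O|/2$ is Lemma \ref{l:q2} (with $t=2$), your union bound $\sum_{C}|C\cap H|^{2}/|C|$ is exactly the quantity $\widehat{Q}(G)$ of Lemma \ref{l:key} (the paper restricts the sum to prime-order classes, a harmless refinement), and for socle $A_n$ with $H$ primitive the paper combines Mar\'oti's order bound with the Guralnick--Magaard minimal degree theorem \cite{GM} in place of Jordan's theorem, together with a finite check for $n<42$. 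You also correctly identify the obstruction: for families such as ${\rm PGL}_2(q)$ acting on cosets of $D_{2(q-1)}$ (Example \ref{ex:pgl2q}) one has $Q(G,2)\to 1$, and the paper settles its remaining cases only by explicitly identifying $\Sigma(G)$ (e.g.\ as a Johnson graph) or by machine computation in Sections \ref{s:sym} and \ref{s:spor}, not by the structural covering argument you sketch, which is indeed the open crux of the general conjecture.
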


Notice that this implies that $\Sigma(G)$ has diameter at most $2$ and that every edge in $\Sigma(G)$ is contained in a triangle. We present some evidence for the conjecture at the end of Section \ref{s:con}, which is extended in Sections \ref{s:sym} and \ref{s:spor}, where we use probabilistic methods to verify the conjecture for various families of almost simple groups with socle an alternating or sporadic group. It is easy to see that the primitivity hypothesis is essential -- in fact, we can construct imprimitive base-two groups $G$ such that $\Sigma(G)$ is either disconnected with arbitrarily many connected components, or connected with arbitrarily large diameter (see Propositions \ref{p:c2} and \ref{p:c3}). 

Finally, we refer the reader to Section \ref{ss:op} for a number of open problems.

\bigskip

\noindent \textbf{Acknowledgements.} Both authors thank Peter Cameron, Robert Guralnick, Martin Liebeck and Eamonn O'Brien for helpful comments. They also thank an anonymous referee for detailed comments and suggestions that have significantly improved the clarity of the paper. The second author is supported by ARC Discovery Project DP160102323. 

\section{First observations}\label{s:obs}

Let $G \leqslant \Sym(\Omega)$ be a transitive permutation group of degree $n$ with point stabiliser $H$ and Saxl graph $\Sigma(G)$. In this preliminary section, we discuss some of the elementary properties of $\Sigma(G)$. Notice that if $G$ is regular, then $\Sigma(G)$ is a complete graph, so let us assume otherwise. Therefore, $H \ne 1$ and $b(G) \geqs 2$. Of course, if $b(G)>2$ then $\Sigma(G)$ is empty, so we will also assume that $G$ has a base of size $2$.  

\begin{lemma}\label{l:1}
Let $G \leqslant \Sym(\Omega)$ be a finite transitive permutation group with point stabiliser $H$. Suppose $b(G)=2$ and let $\Sigma(G)$ be the Saxl graph of $G$.
\begin{itemize}\addtolength{\itemsep}{0.2\baselineskip}
\item[{\rm (i)}] $G \leqs {\rm Aut}(\Sigma(G))$ acts transitively on the set of vertices of $\Sigma(G)$. In particular, $\Sigma(G)$ is a vertex-transitive graph with no isolated vertices.
\item[{\rm (ii)}] $\Sigma(G)$ is connected if $G$ is primitive.
\item[{\rm (iii)}] $\Sigma(G)$ is complete if and only if $G$ is Frobenius. 
\item[{\rm (iv)}] $\Sigma(G)$ is arc-transitive if $G$ is $2$-transitive, and edge-transitive if $G$ is $2$-homogeneous.   
\item[{\rm (v)}] $\Sigma(G)$ has valency $r|H|$, where $r$ is the number of regular orbits of $H$ on $\O$. 
\item[{\rm (vi)}] $\Sigma(G)$ is a subgraph of $\Sigma(K)$ for every subgroup $K$ of $G$.
\item[{\rm (vii)}] $G$ acts semiregularly on the set of arcs of $\Sigma(G)$.
\item[{\rm (viii)}] If $G_\alpha=G_\beta$, then $\alpha$ and $\beta$ have the same set of neighbours in $\Sigma(G)$.
\end{itemize}
\end{lemma}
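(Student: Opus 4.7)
The plan is to go through the eight parts in order, most of which follow directly from the definition $\{\alpha,\beta\}\in E(\Sigma(G))\iff G_\alpha\cap G_\beta=1$ and the identity $G_{\alpha^g}=G_\alpha^g$.

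For (i), I would verify that $G$ preserves the edge relation by observing $(G_\alpha\cap G_\beta)^g=G_{\alpha^g}\cap G_{\beta^g}$, so bases of size $2$ are sent to bases of size $2$; transitivity on $\O$ then gives vertex-transitivity, and the hypothesis $b(G)=2$ produces at least one edge, so by vertex-transitivity no vertex is isolated. Part (ii) would follow from (i): any automorphism group of a graph that is transitive on vertices permutes the connected components, so $G$ acts transitively on them; the stabiliser of a component is a block of imprimitivity, and since $G$ is primitive this block is either a singleton (impossible by (i)) or all of $\O$. For (iii), $\Sigma(G)$ is complete precisely when $G_\alpha\cap G_\beta=1$ for all distinct $\alpha,\beta$, which is the Frobenius condition. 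Part (iv) is immediate: if $G$ is $2$-transitive then $G$ is transitive on the ordered pairs of distinct points and in particular on those pairs forming an edge, yielding arc-transitivity; $2$-homogeneity gives the unordered version.

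For (v), I would identify the neighbours of a fixed point $\alpha$: $\beta$ is adjacent to $\alpha$ iff $H\cap G_\beta=1$, where $H=G_\alpha$, and since $|H\cap G_\beta|=|H_\beta|$ this is equivalent to $\beta$ lying in a regular $H$-orbit on $\O$. Summing over the $r$ regular orbits, each of length $|H|$, gives valency $r|H|$. For (vi), if $K\leqs G$ and $\{\alpha,\beta\}$ is an edge of $\Sigma(G)$, then $K_\alpha\cap K_\beta\leqs G_\alpha\cap G_\beta=1$, so $\{\alpha,\beta\}$ is also an edge of $\Sigma(K)$, viewing both graphs on the common vertex set $\O$. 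Part (vii) is the observation that if $g\in G$ fixes an arc $(\alpha,\beta)$, then $g\in G_\alpha\cap G_\beta=1$. Finally for (viii), the neighbours of $\alpha$ are the points $\gamma$ with $G_\alpha\cap G_\gamma=1$, and the neighbours of $\beta$ are those with $G_\beta\cap G_\gamma=1$; since $G_\alpha=G_\beta$ these two sets coincide.

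None of these steps is really an obstacle; the statements are essentially immediate consequences of the definition, and the only mildly nontrivial point is the block argument in (ii) and the counting in (v), both standard. The only thing to be a little careful about is that in (vi) the subgroup $K$ need not be transitive on $\O$, so ``$\Sigma(K)$'' should be understood as the graph on vertex set $\O$ whose edges are the $2$-element bases for $K$; with this convention the containment is literal.
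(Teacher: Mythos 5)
Your proposal is correct and follows essentially the same route as the paper: every part is derived directly from the equivalence $\{\alpha,\beta\}$ is an edge iff $G_{\alpha,\beta}=1$, with the components-as-blocks argument for (ii) and the identification of the neighbourhood of $\alpha$ with the union of the regular $G_\alpha$-orbits for (v). The only detail worth noting is that in (iii) the Frobenius characterisation also uses $H\neq 1$, which is automatic from the hypothesis $b(G)=2$.
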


\begin{proof}
First observe that if $\{\a,\b\}$ is a base for $G$, then so is $\{\a^g,\b^g\}$ for all $g \in G$. This gives part (i), and (ii) follows from the fact that the connected components of $\Sigma(G)$ form a system of imprimitivity for $G$. Part (iii) is also clear: $\Sigma(G)$ is complete if and only if every two-point stabiliser is trivial, which is the defining property of a Frobenius group (since $H \ne 1$). In particular, note that $\Sigma(G)$ is complete if $G$ is $2$-transitive (here the base-two condition implies that $G$ is sharply $2$-transitive). Part (iv) is immediate from the definitions (note that if $G$ is sharply $2$-homogeneous, then $G$ acts transitively on edges, but not on arcs). Since $\{\alpha,\beta\}$ is a base for $G$ if and only if $G_\alpha$ acts regularly on $\beta^{G_\alpha}$, it follows that the set of neighbours of $\alpha$ in $\Sigma(G)$ is the union of the regular orbits of $G_\alpha$. This gives part (v). For (vi), observe that if $\{\alpha,\beta\}$ is a base for $G$ then $K_{\alpha,\beta}\leqslant G_{\alpha,\beta}=1$ and so $\{\alpha,\beta\}$ is a base for $K$. In particular, each edge of $\Sigma(G)$ is an edge of $\Sigma(K)$. Part (vii) is clear because arcs in $\Sigma(G)$ correspond to ordered bases for $G$, so the stabiliser of an arc is trivial. Finally, if $G_\alpha=G_\beta$ then $G_{\alpha,\gamma}=G_{\beta,\gamma}$ for all $\gamma\in\Omega$ and so (viii) follows.
\end{proof}

\begin{remark}\label{r:og}
Fix a point $\alpha \in\Omega$. Recall that the orbits of $G$ on $\Omega\times \Omega$ are called \emph{orbitals} and each orbital defines the set of arcs of an \emph{orbital digraph} for $G$ with vertex set $\Omega$. For each orbital $\Delta$, the set $\Delta(\alpha)=\{\beta\in\Omega \,: \, (\alpha,\beta)\in\Delta\}$ is an orbit of $G_\alpha$, which we call a \emph{suborbit} of $G$. This gives a one-to-one correspondence between orbitals and suborbits. As noted in the proof of Lemma \ref{l:1}(v), the union of the regular orbits of $G_\alpha$ form the set of neighbours of $\alpha$ in $\Sigma(G)$. Each regular suborbit gives an orbital of size $|\Omega||G_\alpha|$ and the arcs of $\Sigma(G)$ are the elements of these orbitals. In particular, $\Sigma(G)$ is the \emph{generalised orbital graph} corresponding to the regular suborbits of $G$.
\end{remark}

\begin{example}\label{ex:1}
Let $q$ be a prime power and consider the natural action of $G = {\rm GL}_{2}(q)$ on the set $\O$ of nonzero vectors in the natural module $\mathbb{F}_q^2$. Here two vectors are adjacent in the Saxl graph $\Sigma(G)$ if and only if they are linearly independent, so $\Sigma(G)$ is the complete multipartite graph with $q+1$ parts of size $q-1$. 
\end{example}

Many other permutation groups have a complete multipartite Saxl graph: 

\begin{itemize}\addtolength{\itemsep}{0.2\baselineskip}
\item[{\rm (a)}] Let $L$ be a group of order $n$ acting regularly on itself, let $K$ be a Frobenius group on a set $\Delta$ of size $m$ and consider the imprimitive action of $G=L\times K$ on the set $\Omega=L\times\Delta$. Since $G_{(\ell_1,\delta),(\ell_2,\delta)}=K_\delta\neq 1$ and  $G_{(\ell_1,\delta_1),(\ell_2,\delta_2)}=K_{\delta_1,\delta_2}=1$ (for $\delta_1 \ne \delta_2$), it follows that $\Sigma(G)$ is a complete multipartite graph with $m$ parts of size $n$.
\item[{\rm (b)}] Let $p$ be a prime and consider the action of $G={\rm AGL}_{1}(p^f) = (C_p)^f{:}C_{p^f-1}$ on the set of cosets of $C_{(p^f-1)/r}$, where $r>1$ is a proper divisor of $p^f-1$. Then $G$ is imprimitive, with a system of imprimitivity comprising $p^f$ blocks of size $r$, and  $G$ acts sharply $2$-transitively on the set of blocks. Since elements in the same block have the same stabiliser, we deduce that $\Sigma(G)$ is a complete multipartite graph with $p^f$ parts of size $r$.
\item[{\rm (c)}] Take the action of $G={\rm GL}_n(p)$ on the set of cosets of a subgroup 
$(C_p)^{n-1}$, which is the socle of the stabiliser in $G$ of a $1$-dimensional subspace of the natural module $\mathbb{F}_p^n$ (here $p$ is a prime). Then $G$ is imprimitive, with a system of imprimitivity comprising $m=(p^n-1)/(p-1)$ blocks of size $\ell=(p-1)|{\rm GL}_{n-1}(p)|$.  Since the stabiliser of a point is normal in the stabiliser of a block, all points in the same block have the same stabiliser, and so $\Sigma(G)$ is multipartite with $m$ parts of size $\ell$.  Moreover, the socles of the stabilisers of distinct $1$-spaces intersect trivially and so $\Sigma(G)$ is complete multipartite.
\end{itemize}

As the next two examples demonstrate, other familiar graphs can also arise as Saxl graphs.

\begin{example}\label{ex:payley}
Set $V = \mathbb{F}_{q^2}$, where $q$ is odd, and fix $\xi \in V$ of order $(q+1)/2$ in the multiplicative group $\mathbb{F}_{q^2}^{\times}$. Notice that $\langle \xi\rangle$ acts on $V$ by right multiplication, with $2(q-1)$ regular orbits. Let $\sigma:V\rightarrow V$ be the Frobenius map $x^\sigma=x^q$ and note that $\xi^\sigma=\xi^q=\xi^{-1}$, so 
\[
H:=\langle \xi,\sigma\rangle\cong D_{q+1}.
\] 
Moreover, $\sigma$ permutes the orbits of $\langle \xi\rangle$. Now $\sigma$ fixes the orbit containing $x$ if and only if $x^q=x\xi^i$ for some $i$, which holds if and only if $x^{q-1}\in \langle \xi\rangle$. Thus the orbits of $H$ on $V$ of size $(q+1)/2$ consist precisely of the elements of $V$ lying in the subgroup of $\mathbb{F}_{q^{2}}^{\times}$ of order $(q^2-1)/2$, that is, the nonzero squares. In particular, the non-squares in $V$ lie in regular $H$-orbits. Therefore, if we consider the affine group $G=V{:}H < {\rm A\Gamma L}(V)$, then $\Sigma(G)$ has valency $(q^2-1)/2$ and two elements are adjacent if and only if their difference is a non-square. If $\lambda\in V$ is a non-square, then the map $x\mapsto \lambda x$ gives an isomorphism from $\Sigma(G)$ to its complement, which is the familiar \emph{Paley graph} $P(q^2)$ on $V$, where two elements are adjacent if and only if their difference is a square. In particular, $\Sigma(G)$ is self-complementary. 
\end{example}

\begin{example}\label{ex:johnson}
Let $G={\rm PGL}_2(q)$ and note that $G$ acts $3$-transitively on the set $\Delta$ of 
$1$-dimensional subspaces of $\mathbb{F}_{q}^2$. Let us view $G$ as a permutation group on  
the set $\Omega$ of $2$-subsets of $\Delta$ and fix $\alpha=\{\langle e_1\rangle,\langle e_2\rangle\}\in\Omega$, so $G_\alpha = D_{2(q-1)}$. Notice that the $3$-transitivity of $G$ on $\Delta$ implies that if $\b = \{\langle u\rangle ,\langle v\rangle\}\in \Omega$, then  
$$\Omega_{\langle u\rangle ,\langle v\rangle} := {\big\{}\{\langle u\rangle ,\langle w\rangle\},\{\langle v\rangle ,\langle w\rangle\} \, : \, \langle w\rangle \neq \langle u\rangle, \langle v\rangle{\big\}}$$
is an orbit of $G_{\b}$ of size $2(q-1)$. In particular, $G_{\a}$ has a regular orbit on $\O$ and thus $b(G)=2$. Moreover, if $\l_1,\l_2 \in \mathbb{F}_q^{\times}$ and $\l_1 \ne \l_2$, then the element of $G_{\alpha}$ with preimage 
$$\begin{pmatrix}
0&\lambda_1\\
\lambda_2^{-1} &0
\end{pmatrix}$$
fixes $\{\langle e_1+\lambda_1e_2\rangle,\langle e_1+\lambda_2 e_2\rangle\} \in \O$ and we deduce that $G_\alpha$ has a unique regular suborbit. It follows that two vertices in the corresponding Saxl graph $\Sigma(G)$ are adjacent if and only if they have a point in common. Therefore, $\Sigma(G)$ is the \emph{Johnson graph} $J(q+1,2)$, which is also known as the \emph{triangular graph} $T(q+1)$. 
\end{example}

\begin{remark}
Notice that in all of the above examples, the given Saxl graphs are \emph{strongly regular}, that is, all vertices have the same valency and there are constants $\lambda$ and $\mu$ such that every pair of adjacent (respectively, non-adjacent) vertices have $\lambda$ (respectively, $\mu$) common neighbours. However, this is not a general property of Saxl graphs. For example, the graphs constructed in Proposition \ref{p:c3} (see Section \ref{s:con}) are not strongly regular.
\end{remark}

It is natural to study the behaviour of Saxl graphs under some standard constructions. With this in mind, recall that if $\Gamma$ and $\Sigma$ are graphs with vertex sets $V\Gamma$ and $V\Sigma$ respectively, then their \emph{direct product} $\Gamma\times\Sigma$ is the graph with vertex set $V\Gamma\times V\Sigma$ and $\{(u_1,v_1),(u_2,v_2)\}$ is an edge if and only if $\{u_1,u_2\}$ and $\{v_1,v_2\}$ are edges in $\Gamma$ and $\Sigma$, respectively. 

\begin{lemma}\label{l:prod}
Suppose $G \leqs {\rm Sym}(\O)$ and $K \leqs {\rm Sym}(\Delta)$ are both base-two groups and consider the natural action of $G \times K$ on $\Omega\times\Delta$. Then $b(G \times K)=2$ and $\Sigma(G\times K)=\Sigma(G)\times\Sigma(K)$.
\end{lemma}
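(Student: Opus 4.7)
The plan is to reduce everything to a direct computation of pointwise stabilisers in the product action. For the natural action of $G\times K$ on $\Omega\times\Delta$, one has
\[
(G\times K)_{(\omega,\delta)}=G_{\omega}\times K_{\delta},
\]
and, more generally, the pointwise stabiliser of any subset $S\subseteq\Omega\times\Delta$ factors as
\[
\Bigl(\bigcap_{(\omega,\delta)\in S}G_{\omega}\Bigr)\times\Bigl(\bigcap_{(\omega,\delta)\in S}K_{\delta}\Bigr).
\]
Specialising to a two-element set, the pointwise stabiliser of $\{(\omega_1,\delta_1),(\omega_2,\delta_2)\}$ is $(G_{\omega_1}\cap G_{\omega_2})\times(K_{\delta_1}\cap K_{\delta_2})$.

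First I would establish $b(G\times K)=2$. Since $G$ and $K$ are base-two, neither is regular, so $G_{\omega}\ne 1$ and $K_{\delta}\ne 1$ for all $\omega,\delta$; hence every point stabiliser in $G\times K$ is nontrivial and $b(G\times K)\geqs 2$. For the upper bound, pick a base $\{\omega_1,\omega_2\}$ for $G$ and a base $\{\delta_1,\delta_2\}$ for $K$; the stabiliser formula above immediately gives $(G_{\omega_1}\cap G_{\omega_2})\times(K_{\delta_1}\cap K_{\delta_2})=1$, so $\{(\omega_1,\delta_1),(\omega_2,\delta_2)\}$ is a base for $G\times K$.

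Next I would verify $\Sigma(G\times K)=\Sigma(G)\times\Sigma(K)$ by matching vertex and edge sets. The vertex set of both is $\Omega\times\Delta$ by definition. A pair of \emph{distinct} vertices $\{(\omega_1,\delta_1),(\omega_2,\delta_2)\}$ is an edge of $\Sigma(G\times K)$ if and only if $G_{\omega_1}\cap G_{\omega_2}=1$ and $K_{\delta_1}\cap K_{\delta_2}=1$. A quick observation using non-regularity of $G$ and $K$ shows these two conditions force $\omega_1\ne\omega_2$ and $\delta_1\ne\delta_2$ (otherwise one of the intersections would coincide with a nontrivial point stabiliser). Hence the edge condition is equivalent to $\{\omega_1,\omega_2\}$ being an edge of $\Sigma(G)$ and $\{\delta_1,\delta_2\}$ being an edge of $\Sigma(K)$, which is exactly the definition of an edge of the direct product $\Sigma(G)\times\Sigma(K)$.

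There is no real obstacle here: the entire argument is a bookkeeping exercise with the stabiliser formula, and the only mildly delicate point is remembering that the base-two hypothesis rules out regularity, which is what guarantees the automatic distinctness of coordinates in the edge analysis of the second step.
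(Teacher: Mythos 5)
Your proposal is correct and follows essentially the same route as the paper: both rest on the identity $(G\times K)_{(\omega_1,\delta_1),(\omega_2,\delta_2)}=G_{\omega_1,\omega_2}\times K_{\delta_1,\delta_2}$ and then read off both the base-size claim and the edge correspondence. You spell out two details the paper leaves implicit (that $b(G\times K)\geqs 2$ because base-two groups are non-regular, and that the edge conditions force the coordinates to be distinct), but the underlying argument is the same.
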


\begin{proof}
Let $(\omega_1,\delta_1),(\omega_2,\delta_2)\in\Omega\times\Delta$ and note that $(G\times K)_{(\omega_1,\delta_1),(\omega_2,\delta_2)}=G_{\omega_1,\omega_2}\times K_{\delta_1,\delta_2}$. Therefore, $\{(\omega_1,\delta_1),(\omega_2,\delta_2)\}$ is a base for $G\times K$ if and only if $\{\omega_1,\omega_2\}$ and $\{\delta_1,\delta_2\}$ are bases for $G$ and $K$, respectively. Moreover, $\{(\omega_1,\delta_1),(\omega_2,\delta_2)\}$ is an edge of $\Sigma(G\times K)$ if and only if $\{\omega_1,\omega_2\}$ is an edge of $\Sigma(G)$ and $\{\delta_1,\delta_2\}$ is an edge of $\Sigma(K)$. We conclude that $\Sigma(G\times K)=\Sigma(G)\times\Sigma(K)$.
\end{proof}

Given permutation groups $L \leqslant \Sym(\Delta)$ and $P\leqslant S_k$, we can consider the product action of the wreath product $G=L \Wr P$ on $\Omega=\Delta^k$. Bases for $G$ were investigated in \cite[Theorem 2.13]{BC} and by inspecting the proof we obtain the following result. Here the stabiliser in $P$ of a partition $\mathcal{P} = \{X_1, \ldots, X_t\}$ of $\{1, \ldots, k\}$ is defined to be $\bigcap_{i}P_{X_i}$, where $P_{X_i}$ is the setwise stabiliser of $X_i$ in $P$.

\begin{lemma}\label{lem:findbase}
Let $\omega=(\a_1,\ldots,\a_k)$ and $\omega'=(\b_1,\ldots,\b_k)$ be elements of $\Omega=\Delta^k$ and let $\mathcal{P}$ be the partition of $\{1,\ldots,k\}$ with the property that $i$ and $j$ lie in the same part if and only if $(\a_i,\b_i)^x = (\a_j,\b_j)$ for some $x \in L$. Then $\{\omega,\omega'\}$ is a base for $L \Wr P$ if and only if each $\{\a_i,\b_i\}$ is a base for $L$ and the stabiliser of $\mathcal{P}$ in $P$ is trivial.
\end{lemma}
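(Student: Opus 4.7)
The plan is a direct unpacking of the product action of $G = L \Wr P$ on $\Omega = \Delta^k$. Write a general element of $G$ as $g = (l_1,\ldots,l_k;\sigma)$ with $l_i \in L$ and $\sigma \in P$, and fix the convention that $\sigma$ permutes the coordinates after the $l_i$ act componentwise. Then a short calculation shows that $g$ fixes both $\omega$ and $\omega'$ if and only if, for every $i$,
\[
(\alpha_{i^\sigma},\beta_{i^\sigma}) = (\alpha_i,\beta_i)^{l_i},
\]
where $L$ acts diagonally on $\Delta\times\Delta$. This is the single identity that will drive both directions.

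For the backward implication, suppose each $\{\alpha_i,\beta_i\}$ is a base for $L$ and the stabiliser of $\mathcal{P}$ in $P$ is trivial. Take $g \in G_{\omega,\omega'}$. The displayed equation forces $i$ and $i^\sigma$ to lie in the same part of $\mathcal{P}$ for every $i$; hence $\sigma$ stabilises $\mathcal{P}$, and by hypothesis $\sigma = 1$. With $\sigma = 1$, the equation becomes $(\alpha_i,\beta_i)^{l_i} = (\alpha_i,\beta_i)$, so $l_i \in L_{\alpha_i,\beta_i} = 1$ for each $i$. Therefore $g = 1$ and $\{\omega,\omega'\}$ is a base for $G$.

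For the forward implication I would argue by contrapositive, exhibiting a nontrivial element of $G_{\omega,\omega'}$ in each failure case. If $L_{\alpha_i,\beta_i} \neq 1$ for some $i$, pick a nontrivial $l \in L_{\alpha_i,\beta_i}$ and set $g = (1,\ldots,1,l,1,\ldots,1;1)$ with $l$ in position $i$; then $g \neq 1$ lies in $G_{\omega,\omega'}$. Otherwise, suppose there is a nontrivial $\sigma \in P$ stabilising $\mathcal{P}$. By the defining property of $\mathcal{P}$, for each $i$ there exists $l_i \in L$ with $(\alpha_{i^\sigma},\beta_{i^\sigma}) = (\alpha_i,\beta_i)^{l_i}$ (choose any such $l_i$). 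Then $g = (l_1,\ldots,l_k;\sigma)$ satisfies the displayed identity and hence fixes both $\omega$ and $\omega'$, while $g \neq 1$ because $\sigma \neq 1$.

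There is no real obstacle here; the only thing to be careful about is bookkeeping with the wreath-product convention so that the identity $(\alpha_{i^\sigma},\beta_{i^\sigma}) = (\alpha_i,\beta_i)^{l_i}$ comes out correctly, and the observation that the choice of $l_i$ in the forward direction is possible (and not unique) precisely because $i$ and $i^\sigma$ lie in the same part of $\mathcal{P}$. Once the identity is established, both directions follow immediately and the lemma is really a restatement of the definition of $\mathcal{P}$.
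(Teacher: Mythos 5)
Your proof is correct. The paper gives no argument of its own for this lemma --- it is stated as a consequence of inspecting the proof of \cite[Theorem 2.13]{BC} --- and your direct computation with the product action (reducing everything to the identity $(\alpha_{i^\sigma},\beta_{i^\sigma})=(\alpha_i,\beta_i)^{l_i}$ and then reading off both directions, with the contrapositive construction of a nontrivial stabilising element when either condition fails) is exactly the standard argument that underlies that citation; note only that ``stabiliser of $\mathcal{P}$'' here means the intersection of the setwise stabilisers of the parts, which is what your backward direction uses when concluding $\sigma=1$ from $i$ and $i^\sigma$ always lying in the same part.
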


Recall that the \emph{distinguishing number} $D(G)$ of a permutation group $G \leqs {\rm Sym}(\O)$ is the smallest size of a partition of $\O$ whose stabiliser in $G$ is trivial. For example, $D({\rm Sym}(\O)) = |\O|$ and $D({\rm Alt}(\O)) = |\O|-1$.

\begin{corollary}\label{cor:base2}
If $b(L)=2$ then $L \Wr P$ has a base of size $2$ on $\Omega = \Delta^k$ if and only if the number of orbits of $L$ on ordered bases of size $2$ is at least the distinguishing number of $P$.
\end{corollary}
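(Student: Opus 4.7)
The plan is to deduce the corollary directly from Lemma \ref{lem:findbase}, by identifying the partitions $\mathcal{P}$ of $\{1,\ldots,k\}$ that can arise from ordered base-pairs $(\omega,\omega')$ of $L\Wr P$ with the partitions of $\{1,\ldots,k\}$ into at most $N$ parts, where $N$ denotes the number of orbits of $L$ on ordered bases of size $2$.

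First I would observe that the relation in Lemma \ref{lem:findbase} is precisely the $L$-orbit equivalence on ordered pairs: $i$ and $j$ lie in the same part of $\mathcal{P}$ if and only if $(\a_i,\b_i)$ and $(\a_j,\b_j)$ lie in the same $L$-orbit on $\Delta\times\Delta$. Combined with the requirement that each $\{\a_i,\b_i\}$ be a base of $L$, the map $i\mapsto\text{$L$-orbit of }(\a_i,\b_i)$ is a well-defined function from $\{1,\ldots,k\}$ into the set of $L$-orbits on ordered bases of size $2$. Thus the number of parts of $\mathcal{P}$ is at most $N$.

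For the reverse direction, suppose $N\geqs D(P)$ and let $\mathcal{Q}=\{Q_1,\ldots,Q_t\}$ be a partition of $\{1,\ldots,k\}$ with $t=D(P)\leqs N$ parts and trivial stabiliser in $P$. Choose $t$ distinct $L$-orbits $O_1,\ldots,O_t$ on ordered bases of size $2$ and, for each $i\in Q_j$, pick $(\a_i,\b_i)\in O_j$. Setting $\omega=(\a_1,\ldots,\a_k)$ and $\omega'=(\b_1,\ldots,\b_k)$, the associated partition in Lemma \ref{lem:findbase} is exactly $\mathcal{Q}$ (elements in the same $Q_j$ are $L$-related by construction; elements in different parts cannot be $L$-related because they lie in distinct $L$-orbits), so $\{\omega,\omega'\}$ is a base for $L\Wr P$. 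Conversely, if $L\Wr P$ has a base $\{\omega,\omega'\}$ of size $2$, then by Lemma \ref{lem:findbase} the associated partition $\mathcal{P}$ has trivial stabiliser in $P$, and by the paragraph above it has at most $N$ parts, whence $D(P)\leqs N$.

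No step is genuinely difficult: the only point that requires care is recognising that the equivalence relation implicit in Lemma \ref{lem:findbase} is precisely the $L$-orbit equivalence on ordered pairs in $\Delta\times\Delta$, and that, since any two ordered bases in a common $L$-orbit are interchangeable, the partition $\mathcal{P}$ can be freely prescribed subject only to the constraint that its number of parts is at most $N$.
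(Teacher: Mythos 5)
Your proof is correct and follows exactly the intended route: the paper states Corollary \ref{cor:base2} as an immediate consequence of Lemma \ref{lem:findbase} without writing out the details, and your argument (identifying the partition $\mathcal{P}$ with the fibres of the map sending $i$ to the $L$-orbit of $(\a_i,\b_i)$, bounding its number of parts by the number of orbits on ordered bases, and conversely realising any distinguishing partition with $D(P)$ parts by choosing pairs from distinct orbits) is precisely the deduction the authors have in mind.
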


This corollary shows that $\Sigma(L)$ and $\Sigma(L \Wr P)$ can have very different properties.  For example, suppose $L \leqs {\rm Sym}(\Delta)$ is $2$-transitive and $b(L)=2$, in which case $\Sigma(L)$ is complete. If we take a non-trivial subgroup $P \leqs S_k$, then Corollary \ref{cor:base2} implies that $b(L \Wr P) \geqs 3$ and thus $\Sigma(L \Wr P)$ is empty. 

\vs

These preliminary observations suggest a number of natural problems, some of which we will investigate in this paper. For instance, it is interesting to seek conditions on $G$ that are sufficient for the connectedness of $\Sigma(G)$ (as noted in part (ii) of Lemma \ref{l:1}, primitivity is one such condition). Moreover, if $\Sigma(G)$ is connected then we can  investigate its diameter, as well as the existence of Hamiltonian and Eulerian cycles. It is also natural to explore the connectivity properties of $\Sigma(G)$ for imprimitive base-two groups; for example, can we construct groups $G$ such that $\Sigma(G)$ has arbitrarily many connected components? Some familiar graph-theoretic invariants, such as the clique and independence numbers, also have a natural group-theoretic interpretation in the context of Saxl graphs.

\section{Valency}\label{s:val}

Let $G \leqslant \Sym(\Omega)$ be a transitive permutation group of degree $n$ with point stabiliser $H$ and Saxl graph $\Sigma(G)$. Assume that $b(G)=2$.
Recall that $\Sigma(G)$ is vertex-transitive; we write ${\rm val}(G)$ for its valency. 
By Lemma \ref{l:1}(v), ${\rm val}(G) = r|H|$, where $r$ is the number of regular orbits of $H$ on $\O$. In this section, we investigate the valency of $\Sigma(G)$ in more detail. 

\subsection{Prime valency}\label{ss:pv}

We start by classifying the transitive groups $G$ such that $\Sigma(G)$ has prime valency.

\begin{proposition}\label{p:prime}
Let $G \leqslant \Sym(\Omega)$ be a finite transitive permutation group with $b(G)=2$. Then $\Sigma(G)$ has prime valency $p$ if and only if $G$ is one of the following:
\begin{itemize}\addtolength{\itemsep}{0.2\baselineskip}
\item[{\rm (i)}] $G=C_p\Wr C_2$ acting imprimitively on $2p$ points and $\Sigma(G)$ is the complete bipartite graph $K_{p,p}$.
\item[{\rm (ii)}] $G=S_3$, $p=2$ and $\Sigma(G)$ is the complete graph $K_3$.
\item[{\rm (iii)}] $G={\rm AGL}_{1}(2^f)$, where $p=2^f-1$ is a Mersenne prime and $\Sigma(G)=K_{p+1}$.
\end{itemize}
\end{proposition}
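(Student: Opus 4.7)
The plan is to exploit Lemma \ref{l:1}(v), which identifies $\mathrm{val}(G) = r|H|$ where $H = G_\alpha$ and $r$ is the number of regular $H$-orbits on $\Omega$. Since $H \neq 1$ (the hypothesis $b(G) = 2$ rules out regular $G$), prime valency $p$ forces $|H| = p$ and $r = 1$. Thus $H \cong C_p$, every $H$-orbit on $\Omega$ has length $1$ or $p$ (and a $p$-orbit is automatically regular), there is a unique non-trivial orbit, and $|\Omega| = f + p$ with $f = |\Fix(H)|$.

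The next step is to show that $\Fix(H)$ is a block of imprimitivity for $G$. Two conjugate stabilisers of prime order $p$ either coincide or meet trivially, so the sets $\Fix(H^g) = \Fix(H)^{g^{-1}}$ partition $\Omega$ into $G$-translates of $\Fix(H)$. The block has size $f$, so $f$ divides $|\Omega| = f + p$, forcing $f \in \{1, p\}$. This dichotomy drives the rest of the argument.

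In the case $f = 1$, $H$ is regular on $\Omega \setminus \{\alpha\}$, so $G$ is sharply $2$-transitive of degree $p + 1$, and Lemma \ref{l:1}(iii) already gives $\Sigma(G) = K_{p+1}$. Invoking the classical classification of finite sharply $2$-transitive groups, $G$ has a regular elementary abelian normal subgroup $V$, and the requirement that the point stabiliser be cyclic of prime order rules out the exceptional near-field examples and forces $G = \AGL_1(q^d)$ with $q^d = p + 1$. The equation $q^d - 1 = p$ with $p$ prime then leaves only $(q, d) = (3, 1)$, giving $G = S_3$ and case (ii), or $q = 2$ with $p = 2^d - 1$ Mersenne, giving case (iii). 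In the case $f = p$, the action has two blocks $\mathcal{B}_1, \mathcal{B}_2$ of size $p$; let $K$ be the kernel of the induced action on the pair of blocks, so $|K| = p^2$ and $G/K \cong C_2$. Writing $K_i$ for the subgroup of $K$ fixing $\mathcal{B}_i$ pointwise, an elementary analysis gives $|K_i| = p$, $K_1 \cap K_2 = 1$, and each $K_i$ acts regularly on the opposite block, whence $K \cong C_p \times C_p$. For $p$ odd, Schur-Zassenhaus supplies a complement of order $2$ which necessarily swaps $K_1$ and $K_2$, yielding $G \cong C_p \Wr C_2$; for $p = 2$ the action is faithful, so $G$ is a transitive subgroup of $S_4$ of order $8$, hence $G = D_8 = C_2 \Wr C_2$. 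In either situation the action is the natural imprimitive one, and $\Sigma(G) = K_{p,p}$ by example (a) in Section \ref{s:obs}, producing case (i). The converse direction is a direct verification for each family.

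The main obstacle is the appeal to the classification of finite sharply $2$-transitive groups in the $f = 1$ case; this is classical (Frobenius-Zassenhaus) but is the only non-elementary ingredient. Everything else — the valency identity, the block structure coming from $\Fix(H)$, and the structural analysis in the $f = p$ case — follows the standard pattern for transitive groups with a point stabiliser of prime order.
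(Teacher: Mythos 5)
Your proof is correct and follows essentially the same route as the paper: prime valency forces $|G_\alpha|=p$ with a unique regular orbit, $\Fix(G_\alpha)$ is a block of imprimitivity giving the dichotomy $|\Omega|\in\{p+1,2p\}$, Zassenhaus's theorem handles the sharply $2$-transitive case, and elementary group theory handles the degree-$2p$ case. The only cosmetic difference is that in the degree-$2p$ case you identify $G$ via the block kernel $K=K_1\times K_2$ and a complement of order $2$, whereas the paper uses Sylow's theorem inside $S_p\Wr C_2$; both are routine.
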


\begin{proof}
First observe that if $G$ is one of the groups in part (i), (ii) or (iii), then $\Sigma(G)$ has the given form and thus ${\rm val}(G)$ is a prime. For the remainder, let us assume $G \leqslant \Sym(\Omega)$ is transitive, $b(G)=2$ and ${\rm val}(G)=p$ is a prime. 

Let $\alpha\in\Omega$. The set of neighbours of $\alpha$ in $\Sigma(G)$ is the union of the regular orbits of $G_\alpha$, so $|G_\alpha|=p$ and $G_\alpha$ has a unique regular orbit $\Delta$. Moreover, $G_{\a}$ fixes every point in $\Omega\setminus\Delta$. The set of fixed points of $G_\alpha$ forms a block of imprimitivity for $G$, which gives rise to a system of imprimitivity $\mathcal{P}$. The complement of a block is a union of blocks in $\mathcal{P}$ and so $\Delta$ is a union of blocks. Since $|\Delta|=p$, either $\Delta$ is also a block in $\mathcal{P}$ and $|\Omega|=2p$, or $G_{\alpha}$ has a unique fixed point and $|\Omega|=p+1$.

In the first case, $G\leqslant S_p\Wr C_2$ acting imprimitively on $2p$ points.  Moreover, $|G|=2p|G_\alpha|=2p^2$. If $p=2$ then $G = C_2 \Wr C_2$ and (i) holds. Now assume $p>2$. By Sylow's Theorem, $G$ has a unique Sylow $p$-subgroup $P$ and from the structure of $S_p\Wr C_2$ we deduce that $P=C_p\times C_p$. Since $G$ is transitive, it contains an element interchanging the two blocks of imprimitivity and so $G=C_p\Wr C_2$. The structure of $\Sigma(G)$ in this case is transparent.

If $G_\alpha$ has a unique fixed point then $G$ is sharply 2-transitive of degree $p+1$. By a theorem of Zassenhaus \cite{Z}, $G$ has a regular normal elementary abelian subgroup $N = (C_r)^f$ for some prime $r$ and $p=r^f-1$. Thus either $r^f=3$ and $p=2$, or $r=2$ and $p$ is a Mersenne prime. This gives the examples in parts (ii) and (iii) of the proposition. 
\end{proof}

\subsection{Even valency}\label{ss:even}

Recall that an \emph{Eulerian cycle} in a graph is a cycle that uses each edge exactly once; if such a cycle exists, then the graph is said to be \emph{Eulerian}. A famous result due to Euler himself states that a connected graph is Eulerian if and only if the degree of every vertex is even. In particular, a connected Saxl graph is Eulerian if and only if its valency is even. 

Our first result concerns the existence of Eulerian cycles in the Saxl graphs of almost simple primitive groups.

\begin{proposition}\label{p:eul}
Let $G \leqs {\rm Sym}(\O)$ be an almost simple primitive permutation group with socle $T$, point stabiliser $H$ and base size $2$. Then one of the following holds:
\begin{itemize}\addtolength{\itemsep}{0.2\baselineskip}
\item[{\rm (i)}] The Saxl graph of $G$ is Eulerian.
\item[{\rm (ii)}] $(G,H) = ({\rm M}_{23},23{:}11)$ and $\Sigma(G)$ is non-Eulerian.
\item[{\rm (iii)}] $G = A_p$ and $H = {\rm AGL}_{1}(p) \cap G = C_p{:}C_{(p-1)/2}$, where $p$ is a prime such that $p \equiv 3 \imod{4}$ and $(p-1)/2$ is composite.
\item[{\rm (iv)}] $T = {\rm L}_{n}^{\e}(q)$, $H \cap T = C_a{:}C_n$ and $G \ne T$, where $a = \frac{q^n-\e}{(q-\e)(n,q-\e)}$, $n \geqs 3$ is a prime and $T \ne {\rm U}_{3}(3), {\rm U}_{5}(2)$.
\end{itemize}
\end{proposition}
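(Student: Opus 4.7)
By Lemma \ref{l:1}(ii) the Saxl graph $\Sigma(G)$ is connected (since $G$ is primitive with $b(G)=2$), so by Euler's theorem $\Sigma(G)$ is non-Eulerian if and only if its valency is odd. By Lemma \ref{l:1}(v) this valency equals $r|H|$, where $r$ denotes the number of regular orbits of $H$ on $\Omega$. Hence $\Sigma(G)$ is non-Eulerian if and only if both $|H|$ and $r$ are odd, and in particular $|H|$ must be odd. The proof therefore reduces to first classifying the almost simple primitive groups $G$ with odd-order point stabiliser, and then pinning down the additional conditions that make $r$ odd.

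For the first step, the Feit--Thompson theorem forces $H$ to be solvable, and I would invoke the known classification of odd-order maximal subgroups of almost simple groups, extracted from the standard classifications of maximal subgroups (the \emph{Atlas} for sporadic socles, Aschbacher's theorem in the Kleidman--Liebeck framework for classical socles, and the classification of maximal subgroups of alternating groups; one also verifies that the exceptional groups of Lie type admit no odd-order maximal subgroup). The resulting possibilities are precisely: if $T$ is alternating, then $T=A_p$ for an odd prime $p$ with $H = \AGL_{1}(p) \cap A_p \cong C_p{:}C_{(p-1)/2}$, where $p\equiv 3 \imod{4}$ is forced by $(p-1)/2$ being odd, and $G=T$ because $S_p$ has no odd-order maximal subgroup; if $T$ is sporadic, then $(G,H) = ({\rm M}_{23}, 23{:}11)$; and if $T$ is classical, then $T = {\rm L}_{n}^{\e}(q)$ for an odd prime $n$ with $H \cap T \cong C_a{:}C_n$ a normaliser of a Singer-type cyclic subgroup, where $a = (q^n-\e)/((q-\e)(n,q-\e))$, and $G$ contains an outer automorphism of odd order so that $G \ne T$. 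This recovers the structural content of (ii), (iii) and (iv).

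For the second step, the hypothesis $b(G)=2$ in each surviving case is supplied by the base-size literature (\cite{BOW,orb} for ${\rm M}_{23}$, \cite{BGS1,James2006} for alternating socles, and \cite{BGS,BGS2} for classical socles). The remaining arithmetic constraints in the statement of the proposition arise from the parity of $r$: in case (iii), a Burnside-type count of the orbits of $H$ on the $(p-2)!$ cosets shows that if $(p-1)/2$ is prime then $r$ is even, so the non-Eulerian condition forces $(p-1)/2$ composite; in case (iv), an analogous computation combined with the subgroup structure of ${\rm L}_{n}^{\e}(q)$ shows that $G=T$ forces $r$ to be even, and the small exceptions $T = {\rm U}_{3}(3)$, ${\rm U}_{5}(2)$ are handled directly by computer. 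The principal obstacle is this parity analysis of $r$, particularly in the classical case (iv), where one must track how the regular orbits of the Singer normaliser interact with the outer automorphisms of $T$; the classification step itself is essentially a synthesis of well-known results.
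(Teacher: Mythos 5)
Your overall strategy is the same as the paper's: reduce non-Eulerian to ``$|H|$ odd and $r$ odd'', classify the primitive almost simple groups with odd-order point stabiliser, and then analyse the parity of $r$ case by case. The paper does the classification step in one stroke by citing \cite[Theorem 2]{LS91}, rather than re-deriving it from Feit--Thompson plus the various maximal subgroup classifications as you propose; that is a cosmetic difference.

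There is, however, a genuine gap in your version of the classification, and it propagates into a missing piece of the proof. The list of primitive almost simple groups with odd-order point stabiliser is \emph{not} exhausted by $({\rm M}_{23},23{:}11)$, the $A_p$ case and the ${\rm L}_n^{\e}(q)$ case: it also contains $({\rm Th},31{:}15)$ and $(\mathbb{B},47{:}23)$ (and the $2$-transitive case $T={\rm L}_2(q)$ with Borel stabiliser, which is at least easy to discard since $b(G)=2$ fails there; one must also dispose of two near-misses for the Monster where the relevant subgroups turn out not to be maximal). Both ${\rm Th}$ and $\mathbb{B}$ have $b(G)=2$ and odd $|H|$, and they do not appear among the exceptions (ii)--(iv) of the statement, so to prove the proposition you must show that $r$ is even for each of them. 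This is not automatic: for ${\rm M}_{23}$ the analogous count gives $r=159$, which is odd. The paper carries out explicit orbit-counting arguments for $\mathbb{B}$ (using fixed points of elements of order $23$ to count the $H$-orbits of length $47$) and for ${\rm Th}$ (using fixed points of elements of order $15$, $5$ and $3$ to count the orbits of lengths $31$, $93$ and $155$), in each case concluding that $r$ is even. Your proposal contains no counterpart to these computations, so as written it does not establish conclusion (i) for these two groups. A secondary, smaller point: your first paragraph of the classification asserts that in the classical case ``$G$ contains an outer automorphism of odd order so that $G\ne T$'', which is not what the classification gives --- $G=T$ is a perfectly good odd-stabiliser case, and the restriction $G\ne T$ in (iv) is a \emph{conclusion} of the parity analysis (as you correctly say later), not an input from the classification.
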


\begin{proof}
Let $r$ be the number of regular orbits of $H=G_{\a}$ on $\O$. By primitivity, $\Sigma(G)$ is connected, so $\Sigma(G)$ is non-Eulerian if and only if both $r$ and $|H|$ are odd. The condition that $|H|$ is odd is highly restrictive and the possibilities for $G$ and $H$ are known. Indeed, \cite[Theorem 2]{LS91} reduces the problem to the cases recorded in Table \ref{tab:odd} (we can exclude the two possibilities in \cite[Theorem 2]{LS91} with $T$ the Monster group since the relevant subgroups $59{:}29$ and $71{:}35$ are not maximal; see \cite[Section 3.6]{Wil} for further details). Since $b(G)=2$, we can immediately eliminate the case in the first row of Table \ref{tab:odd} (indeed, in this case $G$ is $2$-transitive, but not sharply $2$-transitive).

\begin{table}
\[
\begin{array}{lll} \hline
T & H \cap T & \mbox{Conditions} \\ \hline
{\rm L}_{2}(q),\, q=p^f & (C_p)^f{:}C_{(p^f-1)/2} & \mbox{$p$ prime, $p^f \equiv 3 \imod{4}$} \\
A_p & C_p{:}C_{(p-1)/2} & \mbox{$p$ prime, $p \equiv 3 \imod{4}$, $p \ne 7,11,23$}  \\
{\rm L}_{n}^{\e}(q),\, \e = \pm  & C_{a}{:}C_n & \mbox{$a = \frac{q^n-\e}{(q-\e)(n,q-\e)}$, $n \geqs 3$ prime, $T \ne {\rm U}_{3}(3), {\rm U}_{5}(2)$} \\
{\rm M}_{23}, {\rm Th}, \mathbb{B} & 23{:}11, 31{:}15, 47{:}23 \, \mbox{(resp.)} & \\ \hline
\end{array}
\] 
\caption{Primitive almost simple groups with odd order point stabiliser}
\label{tab:odd}
\end{table}

Next suppose $T=A_p$ and $H \cap T = C_p{:}C_{(p-1)/2}$ as in the second row. Note that if 
$G = S_p$ then $H = {\rm AGL}_{1}(p)$ has even order, so we may assume that $G = A_p$ and $H = C_p{:}C_{(p-1)/2}$, which is the normaliser of a Sylow $p$-subgroup of $G$. Set $q=(p-1)/2$ and assume $q$ is a prime. If $g \in G$ and $|H \cap H^g|=p$ then $g$ normalises the unique Sylow $p$-subgroup of $H$ and thus $g \in H$, which is a contradiction. Therefore, 
$|H \cap H^g| \in \{1,q\}$ for all $g \in G \setminus H$. We need to show that $r$ is even. To see this, first observe that $H$ contains $p(q-1)$ elements of order $q$, each of which has cycle-shape $(q^2,1)$ as an element of $G$. Therefore, if $y \in H$ is such an element, then $|C_G(y)|=q^2$ and $|y^G \cap H|=p(q-1)$, which implies that $y$ has 
\[
\frac{|y^G \cap H||C_G(y)|}{|H|} = q(q-1)
\]
fixed points on $\O$. Each fixed point is contained in a unique $H$-orbit, so $H$ has $q^2-q-1$ orbits of length $p$ and thus 
\[
r = \frac{|G:H|-p(q^2-q-1)-1}{|H|} 
\]
is even, as required.

Next assume $G = {\rm L}_{n}^{\e}(q)$ and $H=K{:}L$, where $K=C_{a}$, $L=C_n$ and $a,n$ are described in Table \ref{tab:odd}. First we claim that $(a,n)=1$. If $n$ divides $q-\e$ then the claim follows from \cite[Lemma A.4]{BG_book}, so we may assume $(n,q-\e)=1$. The claim is clear if $q$ is an $n$-power, so let us assume $(n,q)=1$, in which case $n$ divides $q^{n-1}-1$ by Fermat's Little Theorem. If $n$ also divides $a$, then $n$ divides $q^n-\e$, so $n$ divides $q^{n-1}(q-\e)$, which is a contradiction. We conclude that $(a,n)=1$. 

Next observe that $H=N_G(J)$ for every non-trivial subgroup $J \leqslant K$. Now, if $s$ is a prime divisor of $a$, then $H$ has a unique subgroup $J$ of order $s$ and this is contained in $K$. Therefore, if $g \in G$ and $J \leqslant H\cap H^g$, then $g \in N_G(J)$ and thus $g \in H$. It follows that $|H\cap H^g| \in \{1,n\}$ for all $g \in G \setminus H$, so every non-trivial, non-regular orbit of $H$ has length $a$.

Let $y \in H$ be an element of order $n$. By \cite[Lemma 5.2(i)]{LSh15} we have $|y^G \cap H|=a(n-1)$, so every element of $H \setminus K$ has order $n$. Since $\la y \ra$ is a Sylow $n$-subgroup of $H$, we see that $H$ contains $k(n-1)$ elements of order $n$, where $k$ is the number of Sylow $n$-subgroups of $H$. Therefore $k=a$ and thus $N_H(\la y \ra) = \la y \ra$ (we also deduce that $n$ divides $a-1$). In particular, each $H$-orbit of length $a$ contains a unique fixed point of $y$ and thus
\[
r = \frac{|G:H| - a(b-1)-1}{|H|},
\]
where 
\[
b = \frac{|y^G \cap H||C_G(y)|}{|H|}
\]
is the number of fixed points of $y$ on $\O$. Since $b$ is even we conclude that $r$ is even
and thus $\Sigma(G)$ is Eulerian. 

Finally, let us consider the three cases in the final row of Table \ref{tab:odd}. If $(G,H) = ({\rm M}_{23},23{:}11)$ then we can use {\sc Magma} \cite{magma} to show that $r=159$ and thus $\Sigma(G)$ is non-Eulerian, giving part (ii) in the proposition. 

Next assume $(G,H) = (\mathbb{B},47{:}23)$, so $|H \cap H^g| \in \{1,23\}$ for all $g \in G \setminus H$. Fix an element $y \in H$ of order $23$ and note that $|C_G(y)|=46$ and $\la y \ra$ meets the two $G$-classes of elements of order $23$. We can compute $|y^G \cap H|$ from the stored fusion information in the \textsf{GAP} Character Table Library \cite{CTblLib} and we quickly deduce that $y$ has $22$ fixed points on $\O$, say $\a, \a_1, \ldots, \a_{21}$. Now each $\a_i$ corresponds to an $H$-orbit of length $47$ and we note that $y$ has a unique fixed point on each of these orbits, so $H$ has exactly $21$ orbits of length $47$ and thus 
\[
r = \frac{|G:H| - 21\cdot 47 -1}{|H|} = 3555468205105618098822038852
\]
is even. Therefore, $\Sigma(G)$ is Eulerian.

Finally, suppose $(G,H) = ({\rm Th}, 31{:}15)$, so $|H \cap H^g| \in \{1,3,5,15\}$ for all $g \in G \setminus H$. Suppose $u \in H$ has order $15$. By arguing as in the previous case, we calculate that $u$ has $8$ fixed points on $\O$, one of which is $\a$. Moreover, $u$ has a unique fixed point in each $H$-orbit of length $31$, whence $H$ has $7$ orbits of length $31$. Next assume $v \in H$ has order $5$. Here we calculate that $v$ has precisely $800$ fixed points on $\O$. Moreover, $v$ has a unique fixed point on each 
$H$-orbit of length $31$ and $3$ fixed points on each $H$-orbit of length $93$ (since $v$ has cycle-shape $(5^{18},1^3)$ on $H/\la v \ra$). This implies that $H$ has precisely $792/3 = 264$ orbits of length $93$. Finally, suppose $w \in H$ has order $3$. Here $w$ has $23328$ fixed points on $\O$, with a unique fixed point on each $H$-orbit of length $31$ and $5$ fixed points on the $H$-orbits of length $155$. Therefore, $H$ has $23320/5=4664$ orbits of length $155$ and thus
\[
r = \frac{|G:H| - 7\cdot 31 - 264\cdot 93 - 4664\cdot 155 -1}{|H|} = 419682939254
\]
is even. Once again we conclude that $\Sigma(G)$ is Eulerian.
\end{proof}

\begin{remark}
It would be interesting to determine the parity of $r$ in the cases left open in parts (iii) and (iv) of 
Proposition \ref{p:eul}. However, the analysis of the subdegrees is rather more complicated in these cases and we do not pursue this here.
\end{remark}

As an easy application of the O'Nan-Scott theorem (see \cite[Theorem 4.1A]{DM}), we can state a more general result for all finite primitive permutation groups. 

\begin{proposition}\label{p:eulprim}
Let $G \leqs {\rm Sym}(\O)$ be a finite primitive permutation group with socle $T^k$, point stabiliser $H$ and base size $2$. Then one of the following holds:
\begin{itemize}\addtolength{\itemsep}{0.2\baselineskip}
\item[{\rm (i)}] The Saxl graph of $G$ is Eulerian.
\item[{\rm (ii)}] $G$ is soluble.
\item[{\rm (iii)}] $G \leqs L \Wr P$ acting with its product action on $\O = \Delta^k$, where $k \geqs 1$, $L \leqs {\rm Sym}(\Delta)$ is one of the almost simple groups in Table \ref{tab:odd} with socle $T$, and $P \leqs S_k$ is a transitive group of odd order. 
\end{itemize}
\end{proposition}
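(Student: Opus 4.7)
The plan is to combine Euler's theorem with the O'Nan--Scott theorem, using Proposition \ref{p:eul} to handle the almost simple reduction. Since $G$ is primitive, Lemma \ref{l:1}(ii) gives that $\Sigma(G)$ is connected; by Euler's theorem it is Eulerian iff its valency $r|H|$ (Lemma \ref{l:1}(v)) is even. So I would first assume that (i) fails, whence $|H|$ is odd, and then invoke the Feit--Thompson theorem to conclude that $H$ is soluble.

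Next I would run through the O'Nan--Scott types of $G$. The affine case is immediate: $G = \soc(G) \rtimes H$ with $\soc(G)$ elementary abelian and $H$ soluble, so $G$ is soluble and (ii) holds. The almost simple case is handled directly by Proposition \ref{p:eul}: the pair $(G, H)$ lies in Table \ref{tab:odd}, which is case (iii) with $k = 1$ and $P$ trivial. For simple diagonal, compound diagonal, or holomorph types (HS or HC), the point stabiliser contains a diagonal copy of the non-abelian simple group $T$, so $|T|$ divides $|H|$ and $|H|$ is even, contradicting our assumption. For twisted wreath type, $\soc(G) = T^k$ acts regularly and $H$ identifies with a transitive subgroup $P \leqs S_k$ equipped with a coupling $\varphi \colon P_1 \to \Aut(T)$ whose image contains $\mathrm{Inn}(T) \cong T$; since $|H| = |P|$ is odd, $P_1$ is soluble by Feit--Thompson, ruling out any such surjection.

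This leaves the product action case: $G \leqs L \Wr P$ on $\O = \Delta^k$ with $k \geqs 2$, $L \leqs \Sym(\Delta)$ primitive almost simple with socle $T$, and $P \leqs S_k$ transitive. Taking a diagonal point $\omega = (\delta,\ldots,\delta)$, the stabiliser $H$ sits inside $L_\delta \Wr P$, contains the socle stabiliser $T_\delta^k$, and projects onto $P$ via the natural map $L \Wr P \to P$; hence both $|L_\delta|$ and $|P|$ are odd. Since $b(G) = 2$, Lemma \ref{lem:findbase} gives $b(L) \leqs 2$, and primitivity of $L$ rules out the regular case $b(L) = 1$, so $b(L) = 2$. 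Proposition \ref{p:eul} then places $L$ in Table \ref{tab:odd}, so $G$ falls into case (iii).

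The main obstacle I anticipate is the twisted wreath case, where one has to invoke the standard coupling criterion for primitivity to obtain a contradiction from $|H|$ odd. The product action case is mildly delicate in that one must control both the base group and the top group factors of $H$, but the remaining types are routine consequences of the structure of the point stabiliser in each O'Nan--Scott type.
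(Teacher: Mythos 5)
The paper states this result without proof, describing it only as ``an easy application of the O'Nan--Scott theorem'', and your argument is exactly the case analysis the authors intend: if (i) fails then the valency $r|H|$ of the connected graph $\Sigma(G)$ is odd, so $|H|$ is odd, and one runs through the O'Nan--Scott types. Your treatment of the affine, almost simple, diagonal/holomorph and twisted wreath types is correct (Feit--Thompson in the affine and twisted wreath cases, and the observation that a diagonal-type point stabiliser meets the socle in a full diagonal copy of $T$, hence has even order).

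Two steps in the product-action case need repair, though both are fixable. First, from $T_\delta^k \leqs H \leqs L_\delta \Wr P$ you may only conclude that $|T_\delta|$ is odd, not that $|L_\delta|$ is odd: if $G$ is a proper subgroup of $L \Wr P$, then $H$ need not contain, or surject onto, a full copy of $L_\delta$. The fix is to take $L$ to be the component of $G$, that is, the group induced on $\Delta$ by the stabiliser in $G$ of the first coordinate; since $\soc(G)=T^k$ is transitive on $\O$ one has $|H| = |\soc(G)_\omega|\cdot|G/\soc(G)| = |T_\delta|^k\,|G/\soc(G)|$, and as $|L:T|$ divides $|G/\soc(G)|$ it follows that $|L_\delta| = |T_\delta|\,|L:T|$ is odd. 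Second, the closing citations are off: Lemma \ref{lem:findbase} does not yield $b(L)\leqs 2$ (a base of size $2$ for $G$ need not be a base for the overgroup $L\Wr P$, so the lemma does not apply), and Proposition \ref{p:eul} does not ``place $L$ in Table \ref{tab:odd}'' --- its conclusion is a dichotomy about the Saxl graph of $L$, not membership in the table. Neither step is needed: once $|L_\delta|$ is odd and $L$ is primitive almost simple, $L$ lies in Table \ref{tab:odd} directly by \cite[Theorem 2]{LS91}, which is how that table is compiled. With these adjustments your proof is complete and matches the intended argument.
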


By Proposition \ref{p:eul}, $(G,H) = ({\rm M}_{23},23{:}11)$ is an example of a base-two primitive group with a non-Eulerian Saxl graph. In addition, we also have the prime valency examples presented in part (iii) of Proposition \ref{p:prime}, but we do not know if there are infinitely many groups with this property (of course, there will be infinitely many if there are infinitely many Mersenne primes). With this in mind, it would be interesting to study the valency of the Saxl graphs of wreath products in product action in more detail. The following result shows that the parity of ${\rm val}(L)$ need not be preserved under the wreath product construction $L \Wr P$, even when the top group $P \leqs S_k$ has odd order.

\begin{proposition}
Let $L\leqslant \Sym(\Delta)$ be a transitive base-two group such that for $\delta\in\Delta$, $L_\delta$ has $k$ regular orbits for some $k\geqs 2$. Let $p$ be a prime and set $G=L\Wr C_p$ with its product action on $\Omega=\Delta^p$. Then $b(G)=2$ and $\Sigma(G)$ has valency $|L_{\delta}|^p(k^p-k)$.
\end{proposition}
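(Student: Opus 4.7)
My plan is to apply Lemma \ref{lem:findbase} directly and then carry out the combinatorial count. The first step, which is the main interpretive subtlety and the only place where primality of $p$ really enters, is to analyse when the stabiliser of a partition $\mathcal{P}$ of $\{1,\ldots,p\}$ in $P=C_p$ (acting by cyclic shifts) is trivial. Since $p$ is prime, every non-trivial element of $C_p$ is a $p$-cycle, whose only setwise-invariant subsets of $\{1,\ldots,p\}$ are $\emptyset$ and $\{1,\ldots,p\}$. Recalling that the paper defines the stabiliser of $\mathcal{P}$ as $\bigcap_i P_{X_i}$, the intersection of the setwise stabilisers of the parts, this stabiliser is trivial if and only if $\mathcal{P}$ has at least one proper non-empty part, equivalently, $\mathcal{P}$ is not the one-part partition $\{\{1,\ldots,p\}\}$. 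In particular, one must resist the temptation to exclude the singleton partition $\{\{1\},\ldots,\{p\}\}$: although $C_p$ permutes the singletons transitively, no non-trivial element fixes any of them setwise, so its stabiliser is trivial.

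Combined with Lemma \ref{lem:findbase}, this shows that for $\omega=(\alpha_1,\ldots,\alpha_p)$ and $\omega'=(\beta_1,\ldots,\beta_p)$ in $\Omega=\Delta^p$, the pair $\{\omega,\omega'\}$ is a base for $G=L\Wr C_p$ if and only if (a) each $\{\alpha_i,\beta_i\}$ is a base for $L$, and (b) the pairs $(\alpha_i,\beta_i)$ do not all lie in a single $L$-orbit on $\Delta\times\Delta$.

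To compute the valency, I would fix $\omega$ and count the $\omega'$ satisfying (a) and (b). For (a), each $\beta_i$ must lie in one of the $k$ regular $L_{\alpha_i}$-orbits, each of size $|L_\delta|$, yielding $(k|L_\delta|)^p$ choices in total. To subtract those violating (b), I would use that the regular $L$-orbitals on $\Delta\times\Delta$ are in bijection with the $k$ regular suborbits of $L$: once a common orbital is fixed, each $\beta_i$ is forced into the corresponding regular $L_{\alpha_i}$-orbit of size $|L_\delta|$, contributing $|L_\delta|^p$ per orbital and $k|L_\delta|^p$ in total. Hence the valency is $k^p|L_\delta|^p - k|L_\delta|^p = (k^p-k)|L_\delta|^p$. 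Since $k\geqs 2$ this is strictly positive, which together with the trivial lower bound $b(G)\geqs 2$ (as $L_\delta\neq 1$) yields $b(G)=2$.
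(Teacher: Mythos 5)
Your proof is correct and follows essentially the same route as the paper: both apply Lemma \ref{lem:findbase} after observing that the stabiliser in $C_p$ of any partition other than the one-part partition is trivial, and then perform the same count $(k|L_\delta|)^p - k|L_\delta|^p$. The only cosmetic differences are that you count neighbours of a general point rather than of $(\delta,\ldots,\delta)$ and deduce $b(G)=2$ from the positivity of the count, whereas the paper gets it from Corollary \ref{cor:base2} via the distinguishing number of $C_p$.
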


\begin{proof}
First observe that the distinguishing number of $C_p$ is $2$ (in its regular action on $p$ points) and so Corollary \ref{cor:base2} implies that $b(G)=2$. In fact, the stabiliser in $C_p$ of any non-trivial partition of $\{1,\ldots,p\}$ is trivial and so Lemma \ref{lem:findbase} implies that the neighbours of $(\delta,\ldots,\delta)$ in $\Sigma(G)$ are precisely the $p$-tuples whose entries lie in regular $L_\delta$-orbits and at least two of the entries come from distinct regular $L_\delta$-orbits. Therefore 
\begin{align*}
{\rm val}(G) &=\left(\begin{array}{l} \# \, \textrm{$p$-tuples with entries from} \\
\textrm{the regular $L_\delta$-orbits}\end{array}\right)- \left(\begin{array}{l}\# \, \textrm{$p$-tuples with all entries from} \\
\textrm{the same regular $L_\delta$-orbit}\end{array}\right)\\
   &=(k|L_\delta|)^p-k(|L_{\delta}|)^p\\
   &=|L_{\delta}|^p(k^p-k)
\end{align*}
is even.
\end{proof}

\subsection{Probabilistic methods}\label{ss:prob}

Let $G \leqslant \Sym(\Omega)$ be a transitive permutation group of degree $n$ with point stabiliser $H$. As initially observed by Liebeck and Shalev in \cite{LSh99}, probabilistic methods can be an effective tool for determining upper bounds on the base size $b(G)$. For example, a probabilistic approach plays a central role in the proof of Cameron's conjecture on the base sizes of almost simple primitive groups (see \cite{Bur7,BGS,BLS,BOW}). In order to establish an upper bound $b(G) \leqs c$, the basic idea is to show that the probability 
\begin{equation}\label{e:qgc}
Q(G,c) = \frac{|\{(\a_1, \ldots, \a_c) \in \O^c \,:\, G_{\a_1, \ldots, \a_c} \ne 1\}|}{n^c}
\end{equation}
that a randomly chosen $c$-tuple in $\O$ is \emph{not} a base for $G$ is less than $1$, which guarantees the existence of a base of size at most $c$. In this section we use the same approach to establish lower bounds on the valency of the Saxl graph of a base-two group. 

Suppose $b(G)=2$. Fix a point $\a \in \O$ and observe that 
\[
\frac{{\rm val}(G)}{n} = \frac{|\{\b \in \O \,:\, G_{\a,\b}=1\}|}{n} = \frac{|\{(\a_1,\a_2) \in \O^2 \,:\, G_{\a_1,\a_2}=1\}|}{n^2}
\]
and thus
\[
1-\frac{{\rm val}(G)}{n} = \frac{|\{(\a_1,\a_2) \in \O^2 \,:\, G_{\a_1,\a_2} \ne 1\}|}{n^2} =Q(G,2).
\]
Recall that the \emph{clique number} of a graph $\Gamma$ is the maximal size of a complete subgraph. In addition, we say that $\Gamma$ is \emph{Hamiltonian} if it contains a cycle that visits every vertex exactly once.

\begin{lemma}\label{l:q2}
Let $G$ and $H$ be as above and set  
$$t := \max\{m \in \mathbb{N}\,:\, Q(G,2)< 1/m\}.$$ 
If $t \geqs 2$, then $\Sigma(G)$ has the following properties:
\begin{itemize}\addtolength{\itemsep}{0.2\baselineskip}
\item[{\rm (i)}] Any $t$ vertices in $\Sigma(G)$ have a common neighbour.
\item[{\rm (ii)}] Every edge in $\Sigma(G)$ is contained in a complete subgraph of size $t+1$.
\item[{\rm (iii)}] The clique number of $\Sigma(G)$ is at least $t+1$.
\item[{\rm (iv)}] $\Sigma(G)$ is connected with diameter at most $2$.
\item[{\rm (v)}] $\Sigma(G)$ is Hamiltonian. 
\end{itemize}
\end{lemma}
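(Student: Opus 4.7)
The plan is as follows. My starting observation is the identity
\[
Q(G,2) = 1 - \frac{{\rm val}(G)}{n},
\]
recorded in the remarks just above the lemma, which converts the hypothesis $Q(G,2) < 1/t$ into the valency bound ${\rm val}(G) > n(t-1)/t$. Equivalently, for each $\a \in \O$, writing $B(\a) := \{\b \in \O \,:\, G_{\a,\b} \neq 1\}$ for the set of non-neighbours of $\a$ in $\Sigma(G)$ (note $\a \in B(\a)$), one has $|B(\a)| = n - {\rm val}(G) < n/t$.

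For part (i) I apply a simple union bound: given any $s \leqs t$ vertices $\a_1,\ldots,\a_s \in \O$,
\[
\left|\bigcup_{i=1}^{s} B(\a_i)\right| \,\leqs\, \sum_{i=1}^{s} |B(\a_i)| \,<\, \frac{sn}{t} \,\leqs\, n,
\]
so some vertex $\b \in \O$ lies outside every $B(\a_i)$, and is therefore adjacent in $\Sigma(G)$ to each $\a_i$ (and automatically distinct from them, since $\a_i \in B(\a_i)$). The case $s=t$ is statement (i); the slightly stronger form ``any $s \leqs t$ vertices share a common neighbour'' is what I use in the subsequent steps.

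For (ii), given an edge $\{\a,\b\}$ I inductively build a clique containing it up to size $t+1$: if $C \subseteq \O$ is a clique with $|C| = s \leqs t$, then the argument above (applied to the vertices of $C$) produces a common neighbour $\g \notin C$ of every element of $C$, so $C \cup \{\g\}$ is a clique of size $s+1$; iterating from $|C|=2$ yields (ii). Part (iii) is then immediate, since $b(G)=2$ guarantees $\Sigma(G)$ has at least one edge. Part (iv) is the $s=2$ instance of the common-neighbour argument: any two vertices of $\Sigma(G)$ are joined by a path of length at most $2$.

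Finally, for (v) I invoke Dirac's theorem. First, ${\rm val}(G) \leqs n-1$ forces $Q(G,2) \geqs 1/n$, hence $t < n$ and so $n \geqs t+1 \geqs 3$. Next, the valency bound ${\rm val}(G) > n(t-1)/t \geqs n/2$ (using $t \geqs 2$) shows every vertex of $\Sigma(G)$ has degree at least $n/2$, so Dirac's theorem supplies a Hamiltonian cycle. No step is a serious obstacle: the whole lemma reduces to a one-line counting estimate plus a citation of Dirac's theorem, once the identification $Q(G,2) = 1 - {\rm val}(G)/n$ is in hand.
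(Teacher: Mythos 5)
Your proposal is correct and follows essentially the same route as the paper: both convert $Q(G,2)<1/t$ into the valency bound ${\rm val}(G)>n(1-1/t)$ and then use the fact that $t$ subsets of $\O$ each of size greater than $n(1-1/t)$ must have a common element (your union bound on the complements is the same counting argument), with (ii)--(iv) following formally and (v) coming from Dirac's theorem. Your explicit inductive clique-building for (ii) and the check that $n\geqs t+1\geqs 3$ before invoking Dirac are just slightly more detailed versions of steps the paper treats as immediate.
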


\begin{proof}
Since $Q(G,2)<1/t$, we have ${\rm val}(G)>n(1-1/t)$. Now given any set of size $n$ and integers $2 \leqs k \leqs t$, any $k$ subsets of size greater than $n(1-1/t)$ have common intersection of size greater than $n(1-k/t)$. Setting $k=t$, we deduce that the common intersection of the neighbourhoods of any $t$ vertices in $\Sigma(G)$ is non-empty. This gives (i), and (ii), (iii) and (iv) follow immediately. Finally, the Hamiltonicity of $\Sigma(G)$ follows from Dirac's theorem (see \cite[Theorem 3]{Dirac}) since ${\rm val}(G)>n/2$.
\end{proof}

\begin{remark}\label{r:ham}
There are only five known finite connected non-Hamiltonian vertex-transitive graphs (with degree $2,10,28,30,84$ and respective valency $1,3,3,3,3$), and a famous conjecture of Lov\'{a}sz \cite[Problem 11]{Lo} asserts that these are the only examples. Of course, if we take the natural action of $G=S_2$, then $\Sigma(G)$ is the complete graph $K_2$, which is trivially non-Hamiltonian. By Proposition \ref{p:prime}, none of the other known non-Hamiltonian graphs can arise as the Saxl graph of a finite permutation group. 
\end{remark}

In view of Lemma \ref{l:q2}, we are interested in determining upper bounds on $Q(G,2)$. For $x \in G$, let ${\rm fix}(x,G/H)$ be the number of fixed points of $x$ on $G/H$ and let 
\[
{\rm fpr}(x,G/H) = \frac{{\rm fix}(x,G/H)}{n} = \frac{|x^G \cap H|}{|x^G|}
\]
be the \emph{fixed point ratio} of $x$. The key result is the following (see the proof of Theorem 1.3 in \cite{LSh99}).

\begin{lemma}\label{l:key}
Let $G \leqslant \Sym(\Omega)$ be a finite transitive group with point stabiliser $H$. Then
\[
Q(G,2) \leqs\sum_{i=1}^{k}|x_i^G|\cdot {\rm fpr}(x_i,G/H)^2 =: \widehat{Q}(G),
\]
where $x_1, \ldots, x_k$ represent the conjugacy classes of elements of prime order in $G$.
\end{lemma}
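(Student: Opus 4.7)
The plan is to mimic the standard union-bound argument originating in Liebeck--Shalev \cite{LSh99}. The key observation is that a pair $(\a_1,\a_2)\in\O^2$ fails to be a base precisely when $G_{\a_1,\a_2}\ne 1$, which, because every non-trivial finite group contains an element of prime order, is equivalent to the existence of some element $x\in G$ of prime order fixing both $\a_1$ and $\a_2$. Hence the set of non-base pairs is covered by the sets
\[
F(x) := \{(\a_1,\a_2)\in\O^2 \,:\, x\text{ fixes }\a_1 \text{ and }\a_2\}
\]
as $x$ ranges over all elements of prime order in $G$.

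Next I would observe that $|F(x)| = {\rm fix}(x,G/H)^2$, since fixing each coordinate independently requires only that $x$ lie in the stabiliser of that point. Applying the union bound gives
\[
|\{(\a_1,\a_2)\in\O^2 \,:\, G_{\a_1,\a_2}\ne 1\}| \;\leqs\; \sum_{x\in G \text{ of prime order}} {\rm fix}(x,G/H)^2.
\]
Grouping the sum by $G$-conjugacy classes of elements of prime order, and using the fact that ${\rm fix}(\cdot,G/H)$ is a class function, the right-hand side collapses to
\[
\sum_{i=1}^{k} |x_i^G|\cdot {\rm fix}(x_i,G/H)^2.
\]

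Finally I would divide through by $n^2$, recalling the definition of $Q(G,2)$ from \eqref{e:qgc} and the identity ${\rm fpr}(x,G/H) = {\rm fix}(x,G/H)/n$, to obtain the desired bound
\[
Q(G,2) \;\leqs\; \sum_{i=1}^{k} |x_i^G|\cdot {\rm fpr}(x_i,G/H)^2.
\]
There is no real obstacle here: the only subtlety is the passage from ``some non-trivial element fixes both points'' to ``some element of prime order does'', which is immediate from Cauchy's theorem applied to $G_{\a_1,\a_2}$. Everything else is bookkeeping with the union bound and the orbit–stabiliser identity $|x^G|\cdot|C_G(x)| = |G|$ implicit in the fixed-point-ratio formula.
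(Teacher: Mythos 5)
Your proposal is correct and is exactly the argument the paper has in mind: the paper does not spell out a proof but simply points to the proof of Theorem 1.3 in \cite{LSh99}, which is precisely this Cauchy-theorem-plus-union-bound computation, summed over conjugacy classes of prime-order elements and normalised by $n^2$. Nothing to add.
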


As an immediate corollary, note that if
\begin{equation}\label{e:star}
|H|^2\max_{1 \ne x \in H}|C_G(x)| < \frac{1}{2}|G|, 
\end{equation}
then $\widehat{Q}(G)<1/2$ and thus $\Sigma(G)$ satisfies all of the properties in Lemma \ref{l:q2} (with $t=2$). In particular, any two vertices in $\Sigma(G)$ have a common neighbour. 

\subsection{Asymptotics}\label{ss:asymp}

It is interesting to consider the asymptotic behaviour of $Q(G,2)$ for infinite families of base-two groups $G$. For example, let $G \leqs {\rm Sym}(\O)$ be an almost simple primitive permutation group with socle $T = A_n$ and point stabiliser $H$. Assume that $H \cap T$ acts primitively on $\{1, \ldots, n\}$. In this situation, Cameron and Kantor showed that $Q(G,2) \to 0$ as $n$ tends to infinity (the sketch proof of \cite[Proposition 2.3]{CK}, which is based on the observations that the order of $H$ is small and the minimal degree of $H$ is large, can be formalised by arguing as in the proof of Lemma \ref{l:n42} in Section \ref{s:sym} below). In other words, for large $n$, almost every pair of points in $\O$ form a base for $G$. Further families of primitive groups with the same asymptotic behaviour have been identified by Fawcett in \cite{Fawcett1, Fawcett2}.

In stark contrast, the next example presents a family of base-two primitive groups with very different asymptotic properties.

\begin{example}\label{ex:pgl2q}
Let $q>5$ be a prime power and consider the action of $G={\rm PGL}_2(q)$ on the set $\O$ of pairs of $1$-dimensional subspaces of $\mathbb{F}_{q}^2$. Fix $\a \in \O$ and recall that $G_{\a} = D_{2(q-1)}$ has a unique regular orbit (see Example \ref{ex:johnson}), so 
$$Q(G,2)=1 - \frac{{\rm val}(G)}{|\O|} = 1-\frac{4(q-1)}{q(q+1)}$$
and thus $Q(G,2) \to 1$ as $q \to \infty$. Even though $Q(G,2)>1/2$ for all $q>5$, it is worth noting that $\Sigma(G)$ still has the property that any two vertices have a common neighbour (this follows immediately from the fact that $\Sigma(G)$ is isomorphic to the Johnson graph $J(q+1,2)$).
\end{example}
 
By considering wreath products in the product action, we can construct further families of primitive groups with the same asymptotic behaviour. To do this, we need a preliminary result, which shows that Lemma \ref{lem:findbase} can be used to build groups with a unique regular suborbit. 

\begin{lemma}\label{lem:unique}
Let $L \leqs {\rm Sym}(\Delta)$ be a transitive base-two group such that 
$L_{\delta}$ has $k$ regular orbits for some $k \geqs 2$. Set $G = L\Wr S_k$ with its product action on $\Omega=\Delta^k$. Then $G$ has a unique regular suborbit.
\end{lemma}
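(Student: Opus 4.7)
The strategy is to count the neighbours of a chosen vertex $\omega\in\Omega$ in the Saxl graph and show the count equals $|G_\omega|$; combined with the fact that the neighbours of $\omega$ are a disjoint union of regular $G_\omega$-orbits (each of size $|G_\omega|$, by Lemma \ref{l:1}(v) and Remark \ref{r:og}), this will force there to be exactly one regular suborbit.

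Set $\omega=(\delta,\ldots,\delta)$, so that $G_\omega = L_\delta\Wr S_k$, and let $R_1,\ldots,R_k$ denote the regular orbits of $L_\delta$ on $\Delta$. I will apply Lemma \ref{lem:findbase} to determine which tuples $\omega'=(\beta_1,\ldots,\beta_k)$ form a base with $\omega$. Two conditions must hold: each pair $\{\delta,\beta_i\}$ is a base for $L$, i.e.\ $\beta_i$ lies in some $R_j$; and the associated partition $\mathcal{P}$ of $\{1,\ldots,k\}$ has trivial stabiliser in $S_k$. Since every pair $(\delta,\beta_i)$ has first coordinate $\delta$, the equivalence relation defining $\mathcal{P}$ simplifies to: $i\sim j$ if and only if $\beta_i$ and $\beta_j$ lie in the same $L_\delta$-orbit. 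Thus the parts of $\mathcal{P}$ record which $R_j$ contains each $\beta_i$.

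The crux is then to unpack the triviality of the partition stabiliser: by the paper's convention this stabiliser is $\bigcap_i(S_k)_{X_i}$, which in $S_k$ equals $\prod_i \Sym(X_i)$ and is trivial iff every part $X_i$ is a singleton. Hence $\mathcal{P}$ must be the discrete partition, forcing the $\beta_i$ to lie in pairwise distinct regular orbits of $L_\delta$; as there are exactly $k$ such orbits, each $R_j$ contributes precisely one entry. A routine count then gives $k!\cdot|L_\delta|^k = |G_\omega|$ neighbours of $\omega$, which is exactly the size of a single regular suborbit, completing the argument. The only conceptual step is the identification of the partition-stabiliser condition with the discreteness of $\mathcal{P}$; everything else is bookkeeping with the product-action structure of $L_\delta\Wr S_k$.
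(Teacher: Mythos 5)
Your proposal is correct and follows essentially the same route as the paper: both apply Lemma \ref{lem:findbase} at the diagonal point $(\delta,\ldots,\delta)$ to see that the neighbours are exactly the tuples whose entries lie in pairwise distinct regular $L_\delta$-orbits, count $k!\,|L_\delta|^k = |G_\omega|$ of them, and conclude via semiregularity of $G_\omega$ on its neighbourhood. Your unpacking of why the partition-stabiliser condition forces the discrete partition is a detail the paper leaves implicit, but it is the same argument.
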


\begin{proof}
First observe that the standard action of $S_k$ on $k$ points has distinguishing number $k$. Let us also note that $L$ has exactly $k$ orbits on the set of ordered bases for $L$ of size $2$, so Corollary \ref{cor:base2} implies that $G = L\Wr S_k$ has base size $2$. 

Fix $\delta\in\Delta$ and set $\a=(\delta,\ldots,\delta)\in\Omega$. By Lemma \ref{lem:findbase}, $\{\a,(\gamma_1,\ldots,\gamma_k)\}$ is a base for $G$ if and only if the $\gamma_i$ lie in distinct regular $L_\delta$-orbits. Thus the degree of $\a$ in $\Sigma(G)$ is
\[
k|L_\delta| \cdot (k-1)|L_\delta| \cdots |L_\delta| = k!|L_\delta|^k
  =|G_\a|.
  \]
Since $G_\a$ acts semiregularly on the set of neighbours of $\a$, it follows that 
$G_\a$ has a unique regular orbit.
\end{proof}

\begin{example}\label{c:pex}
Let $p$ be an odd prime and consider the standard action of $L =D_{2p}$ on a set $\Delta$ of size $p$. Now $G= L \Wr S_{(p-1)/2}$ acts on $\O = \Delta^{(p-1)/2}$ via the product action, which allows us to view  
\[
G = (C_p)^{(p-1)/2}{:}(C_2 \Wr S_{(p-1)/2}) < {\rm AGL}_{(p-1)/2}(p)
\]
as a primitive affine group. By Lemma \ref{lem:unique}, ${\rm val}(G) = 2^{(p-1)/2}( (p-1)/2)!$ and thus 
$$Q(G,2) = 1 - \frac{{\rm val}(G)}{|\O|} \to 1 \mbox{ as $p \to \infty$.}$$
\end{example}

\begin{example}\label{r:pp}
We can also use Lemma \ref{lem:unique} to build similar examples from an almost simple primitive group $L$. For instance,  
let $L = {\rm L}_{2}(p)$, where $p \geqs 5$ is a prime such that $p \equiv \pm 3 \imod{8}$ and $p \not\equiv \pm 1 \imod{10}$. Then $H = A_4$ is a maximal subgroup of $L$ and we can consider the primitive action of $L$ on $\Delta = L/H$. For convenience, let us also  assume that $p \equiv 1 \imod{12}$. Let $k$ be the number of regular $H$-orbits on $\Delta$. 
If $x \in L \setminus H$ then $|H \cap H^x| \in \{1,2,3\}$, so the non-trivial orbits of $H$ have length $12$, $6$ and $4$. Write $H = L_{\a}$ for some $\a \in\Delta$.

Suppose $u \in H$ has order $2$. Then $|u^L \cap H|=3$, $|u^L| = p(p+1)/2$ and thus $u$ has precisely $|u^L \cap H||C_L(u)|/|H| = (p-1)/4$ fixed points on $\Delta$ (one of which is $\a$). Now $u$ has cycle-shape $(2^2,1^2)$ on $H/\la u \ra$, so $H$ has $((p-1)/4-1)/2 = (p-5)/8$ orbits of length $6$. Now assume $v \in H$ has order $3$, so $|v^L \cap H|=8$, $|v^L|=p(p+1)$ and $v$ has a unique fixed point on $H/\la v \ra$. It quickly follows that $H$ has $(p-4)/3$ orbits of length $4$. Putting this together, we conclude that 
$$k = \frac{|L:H| - 6(p-5)/8 - 4(p-4)/3 - 1}{|H|} = \frac{p^3-51p+194}{288}$$
(a very similar formula holds if either $p \equiv 2 \imod{3}$ or $p \equiv 3 \imod{4}$). 

If we now set $G=L\Wr S_k$ with its product action on $\O = \Delta^k$, then Lemma \ref{lem:unique} implies that $G$ has a unique regular orbit and thus ${\rm val}(G)=12^kk!$. Since $|\O| = (p(p^2-1)/24)^k$, we deduce that $Q(G,2)\rightarrow 1$ as $|G|\rightarrow \infty$.
\end{example}

\begin{remark}\label{r:pal}
We can also find families of base-two primitive groups $G$ with the property that $Q(G,2) \to c$ as $|G| \to \infty$, for some $c \in (0,1)$. For instance, if $G = V{:}D_{q+1}$ is the primitive group constructed in Example \ref{ex:payley}, where $V = \mathbb{F}_{q^2}$ and $q$ is an odd prime power, then 
\[
Q(G,2)=\frac{1}{2}\left(\frac{q^2+1}{q^2}\right) \rightarrow \frac{1}{2}
\]
as $q$ tends to infinity. 
\end{remark}

\section{Connectivity}\label{s:con}

As before, let $G \leqslant \Sym(\Omega)$ be a transitive permutation group of degree $n$ with point stabiliser $H$ and Saxl graph $\Sigma(G)$. In this section, we investigate the connectivity  of $\Sigma(G)$. If $\Sigma(G)$ is connected, we will write ${\rm diam}(G)$ for the diameter of $\Sigma(G)$ (if $\Sigma(G)$ is disconnected then it will be convenient to set ${\rm diam}(G)=\infty$). As noted in Lemma \ref{l:1}, if $G$ is non-regular then ${\rm diam}(G)=1$ if and only if $G$ is a Frobenius group. 

Recall that the connected components of $\Sigma(G)$ form a system of imprimitivity for $G$. In particular, $\Sigma(G)$ is connected if $G$ is primitive, so we start by considering the connectivity properties of $\Sigma(G)$ when $G$ is imprimitive. Our first result shows that $\Sigma(G)$ can have an arbitrarily large number of  connected components.

\begin{proposition}\label{p:c2}
Let $p$ be an odd prime number. Then there exists an imprimitive group $G$ such that $b(G)=2$ and $\Sigma(G)$ has exactly $p$ connected components.
\end{proposition}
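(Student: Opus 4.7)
Proof plan. I would prove this by constructing, for each odd prime $p$, an explicit imprimitive base-two permutation group $G$ whose Saxl graph has exactly $p$ connected components. By Lemma \ref{l:1}(ii) (the imprimitivity remark) and the fact that the components of $\Sigma(G)$ always form a system of imprimitivity, the task reduces to exhibiting an imprimitive action with base-two where the block system of size $p$ is actually realised as the component partition.

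The starting point would be to pick a base-two transitive group $L$ acting on a set $\Delta$ whose Saxl graph $\Sigma(L)$ has some controllable (bipartite or otherwise structured) component behaviour, and then build $G$ via a product-type construction with a copy of $C_p$. There are two natural candidates that I would try in sequence. The first is the direct product $G = L \times C_p$ acting on $\Delta \times \{1,\ldots,p\}$, for which Lemma \ref{l:prod} gives $\Sigma(G) = \Sigma(L) \times \Sigma(C_p)$, and Weichsel's theorem on tensor products controls the number of components in terms of the bipartiteness of the factors. The second, and ultimately more flexible, candidate is the wreath product $G = L \Wr C_p$ (in product action on $\Delta^p$, or a suitable subgroup if imprimitivity is needed). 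For this choice Corollary \ref{cor:base2} gives $b(G) = 2$ provided $L$ has at least $D(C_p)=2$ orbits on ordered bases, and Lemma \ref{lem:findbase} gives an explicit description of base pairs: $\{\omega,\omega'\}$ is a base exactly when every coordinate is a base for $L$ and the associated partition $\mathcal{P}$ of $\{1,\ldots,p\}$ has trivial $C_p$-stabiliser. The crucial point is that $C_p$ is cyclic of prime order, so the only $C_p$-invariant partition of $\{1,\ldots,p\}$ is the single-part one; hence $\{\omega,\omega'\}$ fails to be a base precisely when either some coordinate pair is not a base, or all coordinate pairs $(\alpha_i,\beta_i)$ lie in a common orbit of $L$ on ordered pairs.

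Using this description, I would identify the natural block system for $G$ (from the $C_p$-cyclic structure, or from the $L$-orbits on ordered pairs) and show that the "all in the same $L$-orbit" obstruction is exactly what forces non-adjacency across distinct blocks while permitting adjacency within each block; by the Lemma \ref{l:1}(ii) observation, this is enough to identify the connected components of $\Sigma(G)$ with the blocks. The parity/primality of $p$ then enters through the triviality of all $C_p$-invariant partitions, which forces the component count to be exactly $p$ rather than some other divisor.

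The main obstacle is calibrating the construction so that the component count lands on $p$ (and not, for instance, $2^{p-1}$, which would arise from an iterated direct product of bipartite-Saxl groups via Weichsel). This requires choosing $L$ so that its orbits on ordered bases, combined with the $C_p$-shift action on tuples, produce exactly one obstruction per cyclic orbit; equivalently, the combinatorial "invariant" distinguishing the $p$ components must be a $C_p$-equivariant function with image of size $p$. Verifying that no spurious edges reconnect these components is the heart of the argument, and would be carried out by a direct stabiliser calculation using Lemma \ref{lem:findbase}.
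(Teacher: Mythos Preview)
Your proposal is a plan rather than a proof, and the two constructions you sketch both run into genuine obstacles that you have not overcome.

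For the direct product route, Lemma \ref{l:prod} gives $\Sigma(L\times C_p)=\Sigma(L)\times K_p$ (since $C_p$ is regular). But for odd $p$ the complete graph $K_p$ is non-bipartite, so Weichsel's theorem tells you that if $\Sigma(L)$ is connected then $\Sigma(L)\times K_p$ is connected; more generally the number of components of $\Sigma(L)\times K_p$ equals the number of components of $\Sigma(L)$. Thus this approach only produces $p$ components if $\Sigma(L)$ already has $p$ components, which is exactly the statement you are trying to prove.

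For the wreath product in product action, the difficulty is that if $L$ is primitive and non-regular on $\Delta$ (the natural hypothesis under which Lemma \ref{lem:findbase} and Corollary \ref{cor:base2} are most useful), then $L\Wr C_p$ acting on $\Delta^p$ is \emph{primitive}, and $\Sigma(G)$ is connected by Lemma \ref{l:1}(ii). Your proposed ``$C_p$-equivariant invariant with image of size $p$'' separating the components therefore cannot exist in this setting. You acknowledge that a ``suitable subgroup'' or an imprimitive $L$ might be needed, but you do not specify one, and the edge description from Lemma \ref{lem:findbase} (each coordinate a base for $L$, and not all ordered pairs $(\alpha_i,\beta_i)$ in a single $L$-orbit) does not visibly factor through any map to a $p$-element set. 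In short, the calibration problem you identify is the whole problem, and it remains unsolved in your outline.

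The paper takes an entirely different and concrete route: it builds $G=P{:}(C_p\times C_2)$ where $P$ is extraspecial of order $p^3$ and exponent $p$, with $C_p\times C_2\leqslant\SL_2(p)$ acting on $P$, and lets $G$ act on the cosets of $H\cong D_{2p}$ (so $|\Omega|=p^3$). A direct calculation of two-point stabilisers shows that the $p$ orbits of $P$ on $\Omega$ are precisely the connected components of $\Sigma(G)$, each inducing a complete multipartite graph $K_{p\times p}$. No product or wreath machinery is used; the argument is a hands-on analysis of fixed points of $\langle x_1\rangle$ and the involution $\tau$ on each $P$-orbit.
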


\begin{proof}
Let $P$ be an extraspecial group of order $p^3$ and exponent $p$. Write $Z(P)=\langle z\rangle$ and choose $x_1,x_2\in P$ such that $P=\langle z,x_1,x_2\rangle$ and 
$[x_1,x_2]=z$. Recall that $\SL_2(p)$ acts on $P$ as a group of automorphisms by acting trivially on $Z(P)$ and linearly on the elementary abelian quotient $P/Z(P)$ (see \cite[(3F), p.163]{Winter}). In particular, the unique involution $\tau\in\SL_2(p)$ acts on $P$ by centralising $z$ and inverting $x_1$ and $x_2$. Moreover, there exists $\phi\in\SL_2(p)$ of order $p$ that acts on $P$ by centralising $\langle z,x_2\rangle$, and mapping $x_1$ to $x_1x_2$.

Let $G=P{:}\langle \phi,\tau\rangle\cong P{:}(C_p\times C_2)$ and $H=\langle x_1\rangle{:}\langle \tau\rangle\cong D_{2p}$. Consider the action of $G$ on $\Omega = G/H$ and note that $|\Omega|=p^3$. Let $\Sigma(G)$ be the corresponding Saxl graph.

Now $P$ has $p$ orbits on $\O$ of size $p^2$, which form a system of imprimitivity for $G$. More precisely, if $\alpha\in\Omega$ is the element corresponding to $H$ and $\Delta$ is the $P$-orbit containing $\alpha$, then 
$$\Delta^{\phi^i} = \{Hz^ax_2^b\phi^i\,:\, a,b\in\{0,\ldots,p-1\}\},\;\; i = 0, 1, \ldots, p-1$$
are the $P$-orbits on $\O$. Note that $N_P(\langle x_1\rangle)=\langle z,x_1\rangle$, so  $\langle x_1\rangle$ has $p$ fixed points on $\Delta$ (and thus $p-1$ orbits of size $p$). 
Moreover, for $\beta=\alpha^{\phi^i}\in\Omega\setminus \Delta$ we have $P_\beta=\langle x_1\rangle^{\phi^i}=\langle x_1x_2^i\rangle$ and thus $\langle x_1\rangle$  acts semiregularly on each $\Delta^{\phi^i}$, $i = 1, \ldots, p-1$.

Since $\tau\in H$ we have $Hz^ax_2^b\phi^i\tau=H(z^\tau)^a(x_2^\tau)^b(\phi^\tau)^i=Hz^ax_2^{-b}\phi^i$, hence $Hz^ax_2^b\phi^i$ is fixed by $\tau$ if and only if $b=0$ (since $p$ is odd). Thus $\tau$ has $p$ fixed points on $\Delta^{\phi^i}$.  If $i=0$ then these fixed points are of the form $Hz^a$ with $a \in \{0, \ldots, p-1\}$, which coincide with the fixed points of $\langle x_1\rangle$. Therefore, $H$ has $p$ fixed points on $\Delta$ and $(p-1)/2$  orbits of size $2p$. In terms of $\Sigma(G)$, it follows that $\alpha$ is adjacent to $p^2-p$ elements of $\Delta$.
On the other hand, $Hz^a\phi^ix_1=Hz^ax_1x_2^{-i}\phi^i$ and so the fixed points of $\tau$ on $\Delta^{\phi^i}$, for $i = 1, \ldots, p-1$, are not fixed by $\langle x_1\rangle$. In particular, this implies that $\tau$ has one fixed point in each $\langle x_1\rangle$-orbit on $\Delta^{\phi^i}$ and so $H$ has $p$ orbits of size $p$ on $\Delta^{\phi^i}$. Therefore, $|G_{\alpha,\beta}|=2$ for all 
$\beta\in \Omega\setminus \Delta$ and thus each edge of $\Sigma(G)$ is contained within one of the $\Delta^{\phi^i}$. In other words, $\Sigma(G)$ is the union of the graphs induced on each $P$-orbit.

Since $G$ acts transitively on the set of $P$-orbits, the induced graphs are isomorphic and so it remains to determine the graph induced on $\Delta$. First observe that 
$P{:}\langle\tau\rangle$ is the setwise stabiliser in $G$ of $\Delta$. As $G$ is finite, $H$ is the stabiliser in $G$ of each of the $p$ vertices $Hz^a$. Moreover, this set of mutually nonadjacent vertices is an orbit of the normal subgroup $N=\langle x_1,z\rangle$ of $P{:}\langle \tau\rangle$. The orbits of $N$ form a system of imprimitivity for $P{:}\langle \tau\rangle$ and since they are transitively permuted by $P{:}\langle \tau\rangle$, each one consists of $p$ mutually nonadjacent vertices. Since the valency is $p^2-p$, it follows that the graph induced on $\Delta$ is the complete multipartite graph with $p$ parts of size $p$.

We conclude that $\Sigma(G)$ is the union of $p$ copies of the complete multipartite graph with $p$ parts of size $p$. In particular, $\Sigma(G)$ has exactly $p$ connected components.
\end{proof}

Let $n$ and $k$ be positive integers. The \emph{wreath graph} $W(n,k)$ is the graph with vertex set $\mathbb{Z}_k\times \mathbb{Z}_n$ such that $\{(a,i),(b,j)\}$ is an edge if and only if $j\equiv i\pm 1 \imod{n}$.  Note that $W(n,k)$ consists of $n$ sets 
$$\Delta_i=\{(a,i) \,:\, a\in\mathbb{Z}_k\}$$ 
of $k$ mutually non-adjacent vertices arranged in an $n$-cycle so that each vertex of $\Delta_i$ is adjacent to all vertices in $\Delta_{i+1} \cup \Delta_{i-1}$. In particular, $\Aut(W(n,k))=S_k \Wr D_{2n}$.
 
\begin{proposition}\label{p:c3}
Let $n \geqs 2$ be an integer and let $p$ be a prime such that $(p,2n+1)=1$. Then there exists a permutation group $G$ such that $\Sigma(G)=W(2n+1,p^n)$.
\end{proposition}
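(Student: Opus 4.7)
Write $k=2n+1$. My plan is to exhibit a single explicit group $G$ of order $kp^{2n}$ with a transitive action on a set $\O$ of size $kp^n$ whose Saxl graph is $W(k,p^n)$. Let $V = \mathbb{F}_p^k$ with standard basis $e_0,\ldots,e_{k-1}$, let $\sigma\in \GL(V)$ be the cyclic shift $e_j \mapsto e_{j+1}$ (indices mod $k$), and let $N$ be the sum-zero hyperplane of $V$, of $\mathbb{F}_p$-dimension $2n$. The hypothesis $(p,k)=1$ ensures that the $\sigma$-fixed line $\mathbb{F}_p\cdot(1,1,\ldots,1)$ is transverse to $N$, so the restriction $\sigma|_N$ has order $k$. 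I would then set $G = N\rtimes\la\sigma\ra$ and let $H\leqs N$ be the $n$-dimensional subspace
\[
H = \{(a_0,\ldots,a_{k-1})\in N \,:\, a_j=0 \text{ for all } j\in Z\}, \qquad Z := \{1,3,\ldots,2n-1\},
\]
acting on $\O = G/H$.

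Next I would analyse the orbit and stabiliser structure. The $N$-orbits on $\O$ are the $k$ sets $\Delta_i := N\sigma^i H$, each of size $p^n$, forming a system of imprimitivity cyclically permuted by $\sigma$. For $\a = H\in\Delta_0$ and $\b = n\sigma^i H\in\Delta_i$, the abelianness of $N$ gives $G_\b = \sigma^i H\sigma^{-i}$, hence
\[
G_{\a,\b} = H\cap H^{\sigma^i},
\]
which depends only on the index $i$. By Lemma~\ref{l:1}(viii), every vertex of $\Delta_i$ therefore has the same neighbourhood in $\Sigma(G)$, so $\Sigma(G)$ is automatically a lexicographic product of a graph on $\{\Delta_0,\ldots,\Delta_{k-1}\}$ with the empty graph on $p^n$ vertices.

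The crux of the argument is to show $H\cap H^{\sigma^i}=0$ if and only if $i\equiv\pm 1\pmod{k}$. Since $H^{\sigma^i}$ is cut out in $N$ by the vanishing of the coordinates indexed by $Z+i\pmod k$, a short dimension count yields $\dim(H\cap H^{\sigma^i}) = |Z\cap(Z+i)|$, reducing the problem to deciding which residues modulo $k$ do \emph{not} lie in the difference set $Z-Z$. Now $Z-Z=\{0,\pm 2,\ldots,\pm 2(n-1)\}$, and since $k=2n+1$ is odd, reducing modulo $k$ turns the negative entries into the odd residues $3,5,\ldots,2n-1$, so $Z-Z \equiv \{0,2,3,4,\ldots,2n-1\}\pmod{k}$ with complement $\{1,2n\}=\{\pm 1\}$. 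I expect this combinatorial identification to be the only real obstacle. Granted it, vertex-transitivity of $G$ promotes the statement that $\a$ is adjacent to exactly $\Delta_1\cup\Delta_{-1}$ to the defining adjacency of $W(2n+1,p^n)$, and faithfulness of the action follows from $\bigcap_i H^{\sigma^i}\leqs H\cap H^\sigma = 0$.
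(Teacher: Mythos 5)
Your construction is essentially the paper's: both take $G=W{:}\la \sigma\ra$ with $W$ the sum-zero hyperplane of $\mathbb{F}_p^{2n+1}$ and $\sigma$ a $(2n+1)$-cycle, act on the cosets of an $n$-dimensional subspace of $W$, observe that all points in a given $W$-orbit share a stabiliser (so each block is a co-clique and adjacency between blocks is all-or-nothing), and reduce everything to deciding for which $i$ the $\sigma^i$-conjugate of the point stabiliser meets it trivially. The genuine difference is in how that crux is settled. The paper defines its subspace $U$ by an explicit spanning set $\la e_1,e_3,\ldots,e_{2n-3},e_{2n}\ra$ of (non-coordinate) vectors and verifies that the graph on the conjugates $U_i$ is a $(2n+1)$-cycle by checking membership of various $e_j$ in various $U_i$ by hand; you instead define $H$ dually, by vanishing of the coordinates indexed by $Z=\{1,3,\ldots,2n-1\}$, so that $\dim(H\cap H^{\sigma^i})=|Z\cap(Z+i)|$ and the whole question collapses to the one-line identity $Z-Z\equiv\{0,2,3,\ldots,2n-1\}\pmod{2n+1}$, with complement $\{\pm 1\}$. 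I have checked the two points you leave implicit: the dimension counts are valid because the sum-zero functional is not in the span of any proper subset of the coordinate functionals (this is where $|Z\cup(Z+i)|\leqs 2n<2n+1$ is used), and the difference-set identity holds because $2n+1$ is odd, so the negative even differences reduce to the odd residues $3,5,\ldots,2n-1$. Your version buys a cleaner, more conceptual verification of the key step and avoids the paper's case-by-case membership checks; the paper's version has the minor advantage of exhibiting an explicit basis of the stabiliser. Both yield the same conclusion, so the proposal is correct.
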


\begin{proof}
Set $V=\mathbb{F}_{p}^{2n+1}$  and let
$$W=\{(v_1,\ldots,v_{2n+1})\in V \, :\, \sum v_i =0\}$$ 
be the deleted permutation module for $S_{2n+1}$. Let $g\in S_{2n+1}$ be the cycle $(1,2,\ldots,2n+1)$ and define the following vectors in $W$:
\begin{align*}
 e_1 &=(p-2n,1,1,\ldots,1)\\
 e_2 &= (1,p-2n,1,1,\ldots,1)\\
 e_3 &=(1,1,p-2n,1,1,\ldots,1)\\
        &\;\; \vdots\\
 e_{2n+1}&=(1,\ldots,1,p-2n)
\end{align*}
Note that the $e_i$ are all distinct (since $p$ does not divide $2n+1$). Moreover, $e_{2n+1}= -\sum_{i=1}^{2n}e_i$ and $\{e_1,e_2,\ldots,e_{2n}\}$ is a basis for $W$. Finally, observe that $g$ cyclically permutes the $2n+1$ vectors $e_1,e_2,\ldots,e_{2n+1}$.

Form the semidirect product $G=W{:}\langle g\rangle$, which has order $p^{2n}(2n+1)$, and consider the action of $G$ on $\O = G/U$, where 
$$U=\langle e_1,e_3,\ldots, e_{2n-5},e_{2n-3},e_{2n}\rangle$$
is a subspace of $W$. Now 
$$U^g=\langle e_2,e_4,\ldots,e_{2n-2},e_{2n+1}\rangle$$
and thus $U^g\cap U=\{0\}$. Therefore, $G$ acts faithfully on $\Omega$ and we see that $b(G)=2$ since $U^g$ is the stabiliser in $G$ of the coset $Ug$. Let $\Sigma(G)$ be the corresponding Saxl graph. For the remainder of the proof, it will be convenient to define $U_i = U^{g^i}$ for $i \in \{0,1, \ldots, 2n\}$.

First we claim that the $U_i$ are distinct subspaces of $W$. To see this, it suffices to show that $U = U_i$ if and only if $i=0$. Suppose $U = U_i$. Then $e_1^{g^i}\in U$ and so either $i$ is even with $i\leqslant 2n-4$, or $i=2n-1$. 
Now
$$U_2 =\langle e_3,e_5,\ldots,e_{2n-1},e_1\rangle,\;\; U_3 =\langle e_4,e_6,\ldots, e_{2n},e_2\rangle$$
and we see that $e_{2n+1}$ is contained in $U_{2i+4}$ for all $i\in \{0,\ldots,n-3 \}$. Since $e_{2n+1}\notin U$ it follows that $U \ne U_i$ when $i$ is even and $2 \leqs i \leqs 2n-4$.  Finally, $e_3^{g^{2n-1}}=e_{2n-2}\notin U$ and thus $U \ne U_{2n-1}$. This justifies the claim.  

Let $\Gamma$ be the graph with vertex set $U^{\langle g\rangle}$ so that two subspaces are adjacent if and only if they intersect trivially. We have already noted that $U^g \cap U = \{0\}$, so $(U,U_1)$ is an arc in $\Gamma$ and by repeatedly applying $g$ we see that $(U,U_1,U_2, \ldots, U_{2n})$ is a cycle of length $2n+1$ in $\Gamma$. In fact, we claim that $\Gamma$ coincides with this cycle. That is, if $|i-j|>1$ then there is no edge between $U_{i}$ and $U_{j}$. To see this, first observe that $e_1\in U_{2}\cap U$ and so $e_{2i-1}\in U_{2i} \cap U$ for all $i<n$. Also $e_{2n-1}\in U_{i}$ for all even $i$ such that $2\leqs i\leqs 2n-2$, so $e_{2n}\in U_{i}\cap U$ for all odd $i$ in the range $3\leqs i\leqs 2n-1$. Hence $U$ is not adjacent to any $U_i$ with $1<i<2n$ and we deduce that $\Gamma$ is a cycle of length $2n+1$. In particular, $\Gamma$ has diameter $n$. 

We can identify $\Omega$ with the set of translates of the subspaces in $U^{\langle g\rangle}$. That is, 
\[
\O = \{U_i+v \,:\, v \in W,\; 0 \leqs i \leqs 2n\}.
\]
Notice that under this identification, each subset $\{U_i+v \,:\, v \in W\}$ forms a block of imprimitivity for $G$. Since the stabiliser in $G$ of each translate $U+v$ is $U$, it follows that the stabiliser in $G$ of $U_i+v$ is $U_i$. Therefore, $U_i+v$ and $U_j+u$ are adjacent in $\Sigma(G)$ if and only if $U_i\cap U_j=\{0\}$, that is, if and only if $U_i$ and $U_j$ are adjacent in $\Gamma$. In particular, if there is an edge in $\Gamma$ connecting $U_i$ and $U_j$, then $\Sigma(G)$ induces a complete bipartite graph between the set of translates of $U_i$ and the set of translates of $U_j$. Therefore, $\Sigma(G)=W(2n+1,p^n)$ and the proof is complete. 
\end{proof}

Since $W(2n+1,p^n)$ has diameter $n$, the following corollary is immediate.

\begin{corollary}
For each positive integer $n$, there are infinitely many imprimitive groups $G$ such that $\Sigma(G)$ is connected and ${\rm diam}(G)=n$.
\end{corollary}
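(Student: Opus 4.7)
The plan is to invoke Proposition \ref{p:c3} directly; the corollary is essentially a bookkeeping statement about the examples already built. For each integer $n \geqs 2$, one selects any prime $p$ with $(p,2n+1)=1$. Such primes are infinite in number (most crudely, all but finitely many primes avoid the divisors of $2n+1$). For every such $p$, Proposition \ref{p:c3} supplies a permutation group $G=G(n,p)$ of order $p^{2n}(2n+1)$ acting imprimitively on a set of size $p^n(2n+1)$, with $\Sigma(G)\cong W(2n+1,p^n)$. As observed in the sentence immediately preceding the corollary, $W(2n+1,p^n)$ is connected of diameter $n$, so $\Sigma(G)$ is connected with ${\rm diam}(G)=n$. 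The groups for distinct values of $p$ have distinct orders, hence are pairwise non-isomorphic, which delivers infinitely many examples for each fixed $n\geqs 2$.

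The only genuinely graph-theoretic input is the claim ${\rm diam}(W(2n+1,k))=n$. I would justify this in one line: with the independent sets $\Delta_0,\Delta_1,\ldots,\Delta_{2n}$ arranged so that $\Delta_i$ is joined completely to $\Delta_{i\pm 1 \imod{2n+1}}$, the distance between a vertex of $\Delta_i$ and a vertex of $\Delta_j$ equals the cycle distance $\min(|i-j|,\,2n+1-|i-j|)$ (and equals $2$ when $i=j$, since $n\geqs 2$ ensures $\Delta_i$ is joined to two further parts). The maximum of these quantities is $n$.

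For the boundary case $n=1$, Proposition \ref{p:c3} no longer applies, so one instead observes that imprimitive Frobenius groups exist in abundance: by Lemma \ref{l:1}(iii), any such group furnishes an imprimitive $G$ with $\Sigma(G)$ complete, hence connected of diameter $1$. A concrete infinite family is $F=(C_p)^2{:}C_q$ with $q$ a prime divisor of $p-1$ and $C_q$ acting by scalar multiplication on $(C_p)^2$; the action on $F/C_q$ is Frobenius but imprimitive, since any $C_q$-invariant subgroup $C_p<(C_p)^2$ yields an intermediate subgroup $C_p{:}C_q$ strictly between $C_q$ and $F$. Varying $p$ produces infinitely many non-isomorphic examples.

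The main obstacle, if any, is simply checking the diameter of $W(2n+1,p^n)$; all other ingredients are already in place from Proposition \ref{p:c3} and Lemma \ref{l:1}.
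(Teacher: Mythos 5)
Your proof is correct and follows the paper's own route: the paper derives the corollary immediately from Proposition \ref{p:c3} together with the observation that $W(2n+1,p^n)$ has diameter $n$, and varying $p$ over the infinitely many primes coprime to $2n+1$ gives the infinitude of examples. Your additional care in verifying the diameter of the wreath graph and in treating the case $n=1$ separately via imprimitive Frobenius groups (which Proposition \ref{p:c3} does not cover, since it requires $n\geqslant 2$, and for which $W(3,k)$ would have diameter $2$ rather than $1$) only fills in details that the paper leaves implicit.
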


The above results show that the Saxl graph of a base-two imprimitive group can exhibit a wide range of connectivity properties. However, it appears that the situation for primitive groups is rather more restrictive. Indeed, we propose the following striking conjecture.  

\begin{conjecture}\label{c:diam}
Let $G$ be a finite primitive permutation group with $b(G)=2$ and Saxl graph $\Sigma(G)$. Then either $G$ is a Frobenius group and $\Sigma(G)$ is complete, or ${\rm diam}(G)=2$.
\end{conjecture}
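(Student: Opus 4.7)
The plan is to reduce to cases via the O'Nan--Scott theorem (\cite[Theorem 4.1A]{DM}) and then deploy the probabilistic machinery developed in Section~\ref{ss:prob}. Recall that by Lemma~\ref{l:q2}(iv) it suffices to establish $Q(G,2) < 1/2$, and by Lemma~\ref{l:key} this in turn reduces to bounding the more tractable quantity $\widehat{Q}(G) < 1/2$, which depends only on fixed point ratios of elements of prime order in $H$. So the overall strategy is: derive $\widehat{Q}(G) < 1/2$ in the ``generic'' cases, and handle the remaining ``exceptional'' cases by direct structural arguments that exploit the orbit geometry rather than any crude counting estimate.

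First I would treat the almost simple case. For non-standard actions, the deep fixed point ratio estimates of Liebeck--Shalev and subsequent refinements (see \cite{Bur7,BLS,BGS}) typically yield $\widehat{Q}(G) \to 0$ as $|G| \to \infty$, so the conjecture would hold for all but finitely many non-standard almost simple groups, and the finite list of exceptions should succumb to direct computation in Magma. The standard actions --- symmetric and alternating groups on $k$-subsets or partitions, and classical groups on subspaces of their natural modules --- are more delicate, and the authors carry out the alternating/symmetric subcase in Section~\ref{s:sym} when $H \cap T$ is primitive; the remaining subcases would need a parallel analysis tailored to subspace actions, where the subdegrees are better understood via the geometry of the associated polar space, and to imprimitive or product-type point stabilisers, where one can try to combine the probabilistic bound with a double coset analysis.

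Next I would address the affine, diagonal, product action, and twisted wreath types. For an affine group $G = V{:}H$ with $H \leqs \GL(V)$ irreducible, the condition $b(G) = 2$ is equivalent to $H$ having a regular orbit on $V$, and proving $\mathrm{diam}(\Sigma(G)) \leqs 2$ reduces to showing that for every nonzero $w \in V$ there exists some $v$ such that both $v$ and $v+w$ lie in regular $H$-orbits --- an inclusion--exclusion count over the subspaces $C_V(h)$ for $1 \ne h \in H$ that should again fall to bounds on $|C_V(h)|$ in terms of $|V|$ and $|H|$. For product type $G \leqs L \Wr P$ acting on $\Delta^k$, Lemma~\ref{lem:findbase} translates the problem into a condition involving regular suborbits of $L$ and the distinguishing number of $P$, so one can hope to bootstrap a diameter bound from $L$ to $G$ provided $L$ has sufficiently many regular suborbits. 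The diagonal and twisted wreath types are governed by Fawcett's work \cite{Fawcett1,Fawcett2} and should be amenable to a similar probabilistic attack.

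The main obstacle lies precisely in the ``exceptional'' primitive groups, those for which $Q(G,2) \geqs 1/2$. Example~\ref{ex:johnson} shows that $\PGL_2(q)$ acting on $2$-subsets of the projective line has $Q(G,2) \to 1$, yet the Saxl graph (the Johnson graph $J(q+1,2)$) still has diameter~$2$; Example~\ref{ex:payley} and Remark~\ref{r:pal} furnish further examples with $Q(G,2) \to 1/2$, and Example~\ref{c:pex} gives affine examples with $Q(G,2) \to 1$. Such families must be dispatched by direct combinatorial arguments about the suborbit structure, not through any probabilistic bound. Identifying \emph{all} such exceptional families --- equivalently, classifying the primitive base-two groups with a subdegree comparable to the degree --- and then verifying case by case that every pair of vertices shares a neighbour, is in my view the real content of the conjecture and the principal difficulty; the probabilistic reduction is only the first, comparatively soft, step.
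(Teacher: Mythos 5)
The statement you are asked to prove is Conjecture~\ref{c:diam}, which the paper itself leaves open: the authors offer only partial evidence (Theorems~\ref{t:main1} and \ref{t:main2}, Propositions~\ref{p:diag1} and \ref{p:tw1}, and computations for degree at most $4095$), and your proposal likewise does not constitute a proof. What you have written is essentially the paper's own programme --- reduce via O'Nan--Scott, bound $\widehat{Q}(G)$ using Lemma~\ref{l:key} to invoke Lemma~\ref{l:q2}(iv), and treat the rest by hand --- and you candidly identify the unresolved core yourself in your final paragraph. That core is not a routine verification: it is the entire content of the conjecture, and no argument is supplied for it.

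Beyond the overall incompleteness, two specific steps in your ``generic'' reduction would fail. First, the claim that for non-standard almost simple actions the fixed point ratio bounds ``typically yield $\widehat{Q}(G)\to 0$,'' leaving only finitely many exceptions, is not correct. The Liebeck--Shalev estimates control $\sum_i |x_i^G|\cdot{\rm fpr}(x_i,G/H)^c$ for a suitable constant $c$ (which is why they bound $b(G)$ by a constant), but with the exponent forced to be $2$ the sum need not be small: the paper's own Tables~\ref{tab:spor0} and \ref{tab:spor} list dozens of non-standard sporadic examples with $\widehat{Q}(G)>1/2$, several exceeding $4$, and Example~\ref{ex:pgl2q} gives an infinite family of non-standard almost simple groups with $Q(G,2)\to 1$. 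So the exceptional set is infinite and cannot be dispatched by a finite computation. Second, in the affine case the proposed inclusion--exclusion over the fixed spaces $C_V(h)$ cannot succeed in general, since $|C_V(h)|$ may be as large as $|V|/p$ and the number of regular orbits may be as small as one; Example~\ref{c:pex} exhibits affine groups with $Q(G,2)\to 1$, where any such counting argument gives nothing. For these families one genuinely needs structural information about the regular suborbits (as in the Johnson and Paley graph examples), and neither you nor the paper currently has a method that covers them all.
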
 

In fact, we conjecture that the Saxl graph of a primitive group has the following even stronger property.

\begin{conjecture}\label{c:diam2}
Let $G$ be a finite primitive permutation group with $b(G)=2$. Then any two vertices in the Saxl graph of $G$ have a common neighbour. 
\end{conjecture}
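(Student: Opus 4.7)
The plan is to marry the probabilistic framework of Section~\ref{ss:prob} with a stratification of the primitive groups via the O'Nan-Scott theorem. The starting observation is Lemma~\ref{l:q2}(i) with $t=2$: since
\[
|N(\alpha) \cap N(\beta)| \geqs |\Omega| - 2(|\Omega| - {\rm val}(G))
\]
by inclusion-exclusion, it suffices to establish $Q(G,2) < 1/2$, and by Lemma~\ref{l:key} this will follow whenever $\widehat{Q}(G) = \sum_i |x_i^G| \cdot {\rm fpr}(x_i,G/H)^2 < 1/2$.

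First I would tackle the almost simple groups. For non-standard actions, strong fixed point ratio estimates arising from the proof of Cameron's conjecture \cite{Bur7,BGS1,BLS,BOW}, together with the fact that $|H|$ is small relative to $|G|$, force $\widehat{Q}(G) \to 0$ along each infinite family; a small number of exceptional groups of low degree then remain, which can be settled by computation in \textsf{Magma} or by sharpening the character-theoretic bound \eqref{e:star} with detailed subgroup information. For standard actions (alternating socles on subsets or partitions, and classical socles on subspaces) the probabilistic estimate typically fails, so a direct combinatorial construction is required: given $\alpha, \beta \in \Omega$, I would exhibit a common neighbour $\gamma$ from the underlying combinatorial data. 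Example~\ref{ex:johnson} is an illustrative baby case, where $\Sigma(\PGL_2(q)) \cong J(q+1,2)$ and any two $2$-subsets share a common neighbour by a trivial set-theoretic argument.

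For the remaining O'Nan-Scott types I would exploit Lemma~\ref{lem:findbase}: for $G \leqs L \Wr P$ in product action on $\Delta^k$, the Saxl-graph neighbours of a tuple are determined by the regular suborbits of $L$ together with a distinguishing-number condition on $P$, suggesting an inductive reduction on $k$ to the almost simple or affine case $L$, combined with a separate partition-stabiliser argument in $P$. Diagonal-type and twisted wreath product groups can be attacked using the explicit orbit analyses of Fawcett~\cite{Fawcett1,Fawcett2}. For primitive affine groups $G = V{:}H$ with $H \leqs \GL(V)$, the statement becomes representation-theoretic: one must show that for any $u, v \in V$ the two translated unions $u + R$ and $v + R$, where $R$ is the union of the regular $H$-orbits on $V$, meet non-trivially, which is a question in the spirit of the $k(GV)$-problem.

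The main obstacle is the \emph{tight regime} exemplified by Examples~\ref{ex:pgl2q}, \ref{c:pex} and~\ref{r:pp}, where $Q(G,2) \to 1$ and the point stabiliser has very few (often a unique) regular orbit. Here the probabilistic method is powerless, and any proof must exploit detailed orbital and subgroup structure. I expect resolution in tightly controlled families (alternating socles with primitive point stabilisers, sporadic socles, and affine groups with quasisimple point stabilisers) to come first, with the general affine and product-action cases being the hardest to tame.
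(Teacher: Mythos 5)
The statement you are trying to prove is stated in the paper as a \emph{conjecture} (Conjecture \ref{c:diam2}), and the paper does not prove it: it only establishes special cases, namely almost simple groups with alternating socle and primitive point stabiliser (Theorem \ref{t:main1}), various sporadic socles (Theorem \ref{t:main2} and Propositions \ref{p:spor2}, \ref{p:mon}), asymptotic versions for diagonal-type and twisted wreath products via Fawcett's work (Propositions \ref{p:diag1} and \ref{p:tw1}), and a computational check for primitive groups of degree at most $4095$. Your proposal accurately reconstructs the authors' strategy for these partial results --- the reduction to $Q(G,2)<1/2$ via Lemma \ref{l:q2} and Lemma \ref{l:key}, the bound \eqref{e:star}, O'Nan--Scott stratification, computation for the exceptional low-degree cases, and the correct identification of the ``tight regime'' where $Q(G,2)\to 1$ as the real obstruction. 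Your inclusion--exclusion inequality $|N(\alpha)\cap N(\beta)|\geqs 2\,{\rm val}(G)-|\Omega|$ is also correct and is exactly the mechanism behind Lemma \ref{l:q2}(i).

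However, as a proof of the statement there is a genuine gap: every step that goes beyond what the probabilistic bound delivers is left as an intention rather than an argument. You write that for standard actions you ``would exhibit a common neighbour $\gamma$ from the underlying combinatorial data,'' that the product-action case ``suggests an inductive reduction,'' that affine groups reduce to ``a question in the spirit of the $k(GV)$-problem,'' and you concede that the general affine and product-action cases are ``the hardest to tame.'' None of these is carried out, and these are precisely the cases where the conjecture is open --- indeed the paper itself cannot resolve them (see Remark \ref{r:imp} for the imprimitive-stabiliser alternating case, Table \ref{tab:spor} for unresolved sporadic cases, and Problem \ref{pr:prim}). So your text is a correct and well-informed research programme that matches the paper's programme, but it is not a proof of Conjecture \ref{c:diam2}, and no proof of the full conjecture exists in the paper to compare it against. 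If your goal is to match the paper's actual results, you should restrict the claim to the families covered by Theorems \ref{t:main1} and \ref{t:main2} and then supply the missing computations ($\widehat{Q}(G)<1/2$ for the generic cases, explicit orbit counts or random search for the exceptions such as $(S_7,{\rm AGL}_1(7))$).
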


Recall that the structure of a finite primitive permutation group $G$ is described by the O'Nan-Scott theorem (see \cite[Theorem 4.1A]{DM}) and one of the following holds:
\begin{itemize}\addtolength{\itemsep}{0.2\baselineskip}
\item[(a)] $G$ is almost simple;
\item[(b)] $G$ is of diagonal type;
\item[(c)] $G$ is a twisted wreath product;
\item[(d)] $G$ is of affine type;
\item[(e)] $G$ is of product type.
\end{itemize}

In Sections \ref{s:sym} and \ref{s:spor}, we will establish Conjecture \ref{c:diam2} for families of almost simple primitive groups for which we have a complete classification of the base-two examples (namely, when the socle is an alternating or sporadic group). 

For diagonal-type groups and twisted wreath products, evidence for the veracity of the conjectures can be deduced from work of Fawcett \cite{Fawcett1, Fawcett2}. More precisely, suppose $G$ is a primitive group of diagonal type with socle $T^k$, where $T$ is a non-abelian simple group and $k \geqs 2$. Let $P_G \leqs S_k$ be the group of permutations of the $k$ factors of $T^k$ induced by $G$. By \cite[Theorem 1.1]{Fawcett2}, if $P_G \ne A_k,S_k$, then $b(G)=2$. Moreover, in this situation \cite[Theorem 1.4]{Fawcett2} states that $Q(G,2) \to 0$ as $|G|\to \infty$ (where $Q(G,2)$ is defined in \eqref{e:qgc}, with $c=2$), so by applying Lemma \ref{l:q2} we get the following asymptotic version of Conjecture \ref{c:diam2} for diagonal type groups.

\begin{proposition}\label{p:diag1}
Any two vertices in $\Sigma(G)$ have a common neighbour for all sufficiently large primitive groups $G$ of diagonal type with socle $T^k$ and $P_G \ne A_k, S_k$.
\end{proposition}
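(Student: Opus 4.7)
The plan is to reduce Proposition \ref{p:diag1} to a single application of Lemma \ref{l:q2}(i) with the parameter $t = 2$. As a preliminary step, I would check that under the stated hypotheses the Saxl graph $\Sigma(G)$ is genuinely non-empty, so that Lemma \ref{l:q2} is even applicable. This is precisely the content of \cite[Theorem 1.1]{Fawcett2}: any primitive group of diagonal type with socle $T^k$ and $P_G \ne A_k, S_k$ has base size $2$.

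Having secured this, I would invoke Fawcett's asymptotic estimate \cite[Theorem 1.4]{Fawcett2}: for the same family of groups, $Q(G,2) \to 0$ as $|G| \to \infty$. Consequently, for all sufficiently large such $G$, one has $Q(G,2) < 1/2$, which translates in the notation of Lemma \ref{l:q2} into the inequality $t \geqs 2$. Part (i) of that lemma with $t = 2$ then immediately yields the desired conclusion: any two vertices of $\Sigma(G)$ have a common neighbour.

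The argument as outlined is essentially just a citation chain, with all of the substantial work packaged inside Fawcett's estimate. The only genuine obstacle is a bookkeeping one: one must confirm that the quantity $Q(G,2)$ used in \cite{Fawcett2} coincides with the pair-non-base probability defined in \eqref{e:qgc}, and that the meaning of ``sufficiently large'' in \cite[Theorem 1.4]{Fawcett2} matches the sense in which $|G| \to \infty$ is used here (namely, through the family of diagonal-type groups with $P_G \ne A_k, S_k$, rather than through some other parameter such as $|T|$ or $k$ alone). Since Fawcett's probabilistic analysis is carried out within the same Liebeck--Shalev framework that underpins Lemma \ref{l:key} and its corollary \eqref{e:star}, this compatibility should be immediate, and no further estimates are required beyond citing the relevant theorems.
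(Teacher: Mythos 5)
Your proposal is correct and matches the paper's own argument exactly: the authors likewise cite \cite[Theorem 1.1]{Fawcett2} to get $b(G)=2$, then \cite[Theorem 1.4]{Fawcett2} for $Q(G,2)\to 0$, and conclude via Lemma \ref{l:q2} once $Q(G,2)<1/2$. No further comment is needed.
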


In addition, for each fixed $k \geqs 5$, \cite[Theorem 1.5]{Fawcett2} implies that the same conclusion holds for any diagonal type group $G$ (without any condition on $P_G$). Similarly, if $G=T^k{:}P$ is a twisted wreath product and $P \leqs S_k$ is primitive, then \cite[Theorem 5.0.1]{Fawcett1} states that $b(G)=2$ and by applying \cite[Theorem 5.0.2]{Fawcett1} we get the following result.

\begin{proposition}\label{p:tw1}
Any two vertices in $\Sigma(G)$ have a common neighbour for all sufficiently large primitive twisted wreath products $G = T^k{:}P$ such that $P \leqs S_k$ is primitive. 
\end{proposition}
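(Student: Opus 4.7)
The plan is to mirror exactly the strategy used for Proposition \ref{p:diag1}: combine the probabilistic criterion of Lemma \ref{l:q2} with the two main results of \cite{Fawcett1} on twisted wreath products. Concretely, Lemma \ref{l:q2}(i) tells us that it is sufficient to prove $Q(G,2) < 1/2$ for all sufficiently large groups $G$ in the family under consideration (taking $t = 2$ in the notation of that lemma), since then any two vertices of $\Sigma(G)$ share a common neighbour.

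First I would verify that the Saxl graph in question is non-empty: under the hypothesis that $P \leqs S_k$ is primitive, \cite[Theorem 5.0.1]{Fawcett1} gives $b(G) = 2$, so $\Sigma(G)$ has edges and the conclusion of the proposition is meaningful. Next, \cite[Theorem 5.0.2]{Fawcett1} provides precisely the asymptotic input we need, namely that $Q(G,2) \to 0$ as $|G| \to \infty$ over the family of primitive twisted wreath products $G = T^k{:}P$ with $P \leqs S_k$ primitive. In particular, there is a threshold beyond which $Q(G,2) < 1/2$.

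For all such sufficiently large $G$, Lemma \ref{l:q2}(i) (with $t = 2$) then immediately yields that any two vertices in $\Sigma(G)$ have a common neighbour, which is the desired conclusion.

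There is no real obstacle here, so long as \cite[Theorem 5.0.2]{Fawcett1} is available in the form $Q(G,2) \to 0$; the argument is essentially an application of the black-box results already in the literature, assembled exactly as in Proposition \ref{p:diag1}. If one wanted an explicit threshold rather than an asymptotic statement, the main task would be to extract effective bounds from Fawcett's proof (which proceeds via fixed point ratio estimates of the form fed into Lemma \ref{l:key}), but for the statement as given this is unnecessary.
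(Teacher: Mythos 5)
Your proposal is correct and follows exactly the paper's route: the paper deduces Proposition \ref{p:tw1} by citing \cite[Theorem 5.0.1]{Fawcett1} for $b(G)=2$ and \cite[Theorem 5.0.2]{Fawcett1} for $Q(G,2)\to 0$, then applying Lemma \ref{l:q2}, precisely as in the diagonal-type case of Proposition \ref{p:diag1}. No discrepancies to note.
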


There are very few results in the literature on base-two primitive groups in the remaining cases (d) and (e) above, but there is some evidence to suggest that our conjectures are still valid in these cases. For example, we have verified Conjecture \ref{c:diam2} for all primitive groups in the {\sc Magma} database (that is, all primitive groups of degree at most $4095$). Note that this includes all primitive subgroups of ${\rm AGL}_{d}(q)$ where $q = 2,3,5$ and $d \leqs 11,7,5$, respectively. It is also worth noting that the conjecture holds for the infinite family of primitive affine groups $G$ in Example \ref{ex:payley}, where $\Sigma(G)$ is a Paley graph.

\begin{remark}
We have also verified Conjecture \ref{c:diam2} for an infinite family of primitive groups $G$ for which $Q(G,2) \to 1$ as $|G|$ tends to infinity. Indeed, if $q > 5$ then the action of $G = {\rm PGL}_{2}(q)$ on the set $\O$ of cosets of $D_{2(q-1)}$ is primitive and we showed in Example \ref{ex:pgl2q} that $Q(G,2) \to 1$ as $q \to \infty$. Moreover, even though the probability that a randomly chosen pair of points in $\O$ forms a base for $G$ tends to $0$, we still observe that any two vertices in $\Sigma(G)$ have a common neighbour. It would be interesting to show that this property also holds for the groups in Examples \ref{c:pex} and \ref{r:pp}. For instance, with the aid of {\sc Magma}, we have checked that the affine groups $G < {\rm AGL}_{(p-1)/2}(p)$ in Example \ref{c:pex} satisfy the conjecture when $p \leqs 13$.
\end{remark}

Let $\omega(G)$ be the \emph{clique number} of $\Sigma(G)$, which is the maximal size of a complete subgraph of $\Sigma(G)$. That is, $\omega(G)$ is the size of the largest subset of $\Omega$ such that any pair of elements forms a base. In this terminology, Conjecture \ref{c:diam2} asserts that if $G$ is primitive then every edge in $\Sigma(G)$ is contained in a triangle and thus $\omega(G) \geqs 3$. Note that this property does not extend to all base-two transitive groups. For example, it is easy to see that $\omega(G)=2$ for the imprimitive groups $G$ we constructed in the proof of Proposition \ref{p:c3}. Also observe that we can construct primitive groups $G$ with the property that $\omega(G)$ is arbitrarily large. For instance, $\omega(G)=q$ when $G = \mathbb{F}_{q^2}{:}D_{q+1}$ is a primitive affine group as in Example \ref{ex:payley} (see \cite{BDR}). In view of Lemma \ref{l:q2}(iii), further examples can be constructed from any family of base-two groups $G$ such that $Q(G,2) \to 0$ as $|G|$ tends to infinity (see Section \ref{ss:asymp}).

\section{Symmetric and alternating groups}\label{s:sym}

Let $G \leqs {\rm Sym}(\O)$ be an almost simple primitive group with socle $T=A_n$ and point stabiliser $H$. Suppose $G = S_n$ or $A_n$ and consider the action of $H$ on 
$\{1, \ldots, n\}$. If $H$ acts primitively, then the groups with $b(G)=2$ are determined in \cite{BGS1}. Similarly, if $H$ is transitive but imprimitive, then James \cite{James2006, James_thesis} has classified the groups with $b(G)=2$ (see \cite[Theorem 1.2 and Remark 5.3]{James2006}). Finally, if $H$ is intransitive then $H = (S_k  \times S_{n-k}) \cap G$ with  $1 \leqs k < n/2$ and it is easy to check that the trivial bound
$$b(G) \geqs \frac{\log |G|}{\log |\O|}$$
gives $b(G)>2$, unless $G = A_5$ and $k=2$, in which case $b(G)=2$. See \cite[Remark 1.7]{BGS1} for the handful of additional groups that arise when $n=6$ and $G \ne S_6,A_6$. 

In view of the above results, we have a complete classification of the groups of this form with $b(G)=2$. For instance, if $n>12$ then $b(G)=2$ if and only if one of the following holds:
\begin{itemize}\addtolength{\itemsep}{0.2\baselineskip}
\item[{\rm (a)}] $H$ acts primitively on $\{1, \ldots, n\}$;
\item[{\rm (b)}] $H = (S_k \Wr S_\ell) \cap G$, where $n=k\ell$, $k \geqs 3$ and either $G=S_n$ and $\ell \geqs \max\{8,k+3\}$, or $G=A_n$ and $\ell \geqs \max\{7-\delta_{3,k},k+2\}$.
\end{itemize}
We refer the reader to \cite{James2006} for the conditions on $k$ and $\ell$ in (b).

In this section, we establish Conjecture \ref{c:diam2} in case (a). In the statement of the next result, we define $Q(G,2)$ as in \eqref{e:qgc} (with $c=2$).

\begin{theorem}\label{t:main1}
Let $G$ be an almost simple primitive permutation group with socle $T = A_n$ and point stabiliser $H$. Assume that $b(G)=2$ and $H \cap T$ acts primitively on $\{1,\ldots, n\}$. Then one of the following holds:
\begin{itemize}\addtolength{\itemsep}{0.2\baselineskip}
\item[{\rm (i)}] $Q(G,2)<1/2$;
\item[{\rm (ii)}] $G=S_7$, $H = {\rm AGL}_{1}(7)$ and $Q(G,2) = 13/20$.
\end{itemize}
Moreover, any two vertices in the Saxl graph $\Sigma(G)$ have a common neighbour, so $\Sigma(G)$ has diameter $2$ and every edge in $\Sigma(G)$ is contained in a triangle.
\end{theorem}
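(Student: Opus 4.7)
The plan is to combine Lemma~\ref{l:key} with Lemma~\ref{l:q2}. By Lemma~\ref{l:key},
\[
Q(G,2) \leqs \widehat{Q}(G) = \sum_{i=1}^{k} \frac{|x_i^G \cap H|^2}{|x_i^G|},
\]
where $x_1,\ldots,x_k$ represent the conjugacy classes of prime-order elements of $G$. If we can establish $\widehat{Q}(G) < 1/2$, then Lemma~\ref{l:q2}(i) (with $t=2$) immediately gives the common-neighbour property, and hence also the diameter and triangle conclusions. Thus the main task is to show $Q(G,2) < 1/2$ outside the exceptional pair, and then to verify the common-neighbour property directly for $(G,H) = (S_7, \AGL_1(7))$, where the value of $Q(G,2)$ exceeds $1/2$ and Lemma~\ref{l:q2} does not apply.

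For the generic case, I would follow the strategy of the sketch proof of \cite[Proposition~2.3]{CK}, which rests on two classical estimates. First, a primitive subgroup of $S_n$ not containing $A_n$ has order at most $n^{O(\log n)}$ by the theorems of Babai and Mar\'oti, with a short explicit list of extremal cases. Second, by the Liebeck--Saxl bounds on minimal degree, any non-identity element of $H \cap T$ moves at least $c\sqrt{n}$ points, and in fact at least $cn$ points outside a short list of exceptions. For an element $x \in G$ of prime order fixing $f$ points, the centraliser bound in $S_n$ makes $|x^G|$ grow super-exponentially in $n-f$, so each term $|x^G \cap H|^2/|x^G| \leqs |H|^2/|x^G|$ is negligibly small. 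Summing over the (at most $n$) prime-order conjugacy classes yields $\widehat{Q}(G) \to 0$ as $n \to \infty$, and so $\widehat{Q}(G) < 1/2$ for all $n$ beyond an explicit threshold $n_0$.

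For $n \leqs n_0$, I would enumerate the primitive subgroups $H \cap T \leqs S_n$ (these are tabulated explicitly) and compute $\widehat{Q}(G)$ directly, for instance from the character tables of $G$ and $H$ using \textsc{Magma}. This finite check isolates $(G,H) = (S_7, \AGL_1(7))$ as the unique failure of the $\widehat{Q}(G) < 1/2$ bound, and the exact value $Q(S_7,2) = 13/20$ is then obtained by iterating over ordered pairs in $\O$ (where $|\O| = 120$). For this exceptional pair, the common-neighbour property must be verified by a separate direct computation: since $G$ is vertex-transitive on $\Sigma(G)$, one fixes $\a \in \O$ and checks, for each $H$-orbit representative $\b \in \O \setminus \{\a\}$, that the intersection of neighbourhoods of $\a$ and $\b$ in $\Sigma(G)$ is non-empty. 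There are only a handful of orbit representatives to check, so this is a quick \textsc{Magma} calculation.

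The main obstacle, I expect, lies in the intermediate regime of the generic argument: those $H \cap T$ which are themselves large almost simple primitive groups (for example $\PSL_2(q)$ of degree $q+1$, Mathieu groups in their natural actions, or primitive groups of product-action type). In such cases $|H|$ can be close to the Mar\'oti bound, so the naive inequality $|x^G \cap H| \leqs |H|$ is too lossy to bring $\widehat{Q}(G)$ below $1/2$, and one must instead control $|x^G \cap H|$ using finer fixed-point-ratio estimates of the kind developed for Cameron's conjecture. The delicate task is to arrange this refined analysis so that the resulting threshold $n_0$ comes out small enough to leave only a finite, computationally manageable list of residual cases.
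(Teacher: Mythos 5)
Your proposal follows essentially the same route as the paper: Mar\'oti's order bound together with the Guralnick--Magaard minimal degree bound gives $\widehat{Q}(G)<1/2$ for all large $n$ (with the large-stabiliser cases --- product actions, actions on $m$-subsets, and orthogonal groups over $\mathbb{F}_2$ --- handled by exactly the finer estimates you anticipate as the main obstacle), a computational check covers small $n$, and $(S_7,{\rm AGL}_{1}(7))$ is treated by direct verification of the common-neighbour property. The one small discrepancy is that the finite check does not isolate $(S_7,{\rm AGL}_{1}(7))$ as the unique failure of the bound $\widehat{Q}(G)<1/2$: the cases $(A_{10},{\rm M}_{10})$ and $(A_{9},3^2{:}2A_4)$ also have $\widehat{Q}(G)>1/2$, and there one must instead count the regular orbits of $H$ (there are two) to conclude $Q(G,2)<1/2$ directly.
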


In view of Lemma \ref{l:q2}, it suffices to show that $Q(G,2)<1/2$ (the special case $(G,H) = (S_7, {\rm AGL}_{1}(7))$ will be handled separately). We partition the proof into two cases.

\begin{lemma}\label{l:n42}
Theorem \ref{t:main1} holds if $n \geqs 42$. 
\end{lemma}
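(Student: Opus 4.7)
The plan is to prove $Q(G,2) < 1/2$ for all such $G$ with $n \geqs 42$; once this is established, Lemma \ref{l:q2}(i) applied with $t = 2$ delivers the remaining conclusions of Theorem \ref{t:main1} (common neighbour, diameter $2$, every edge in a triangle). Note that $b(G)=2$ forces $H \cap T \not\supseteq A_n$, so $H \cap T$ is a proper primitive subgroup of $S_n$.

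By Lemma \ref{l:key}, it suffices to show that
\[
\widehat{Q}(G) = \sum_{i=1}^{k} \frac{|x_i^G \cap H|^2}{|x_i^G|} < \frac{1}{2},
\]
where the sum is over representatives of the $G$-classes of prime-order elements. Using $|x_i^G| \geqs M := \min_j |x_j^G|$ together with the elementary inequality $\sum_i |x_i^G \cap H|^2 \leqs \bigl(\sum_i |x_i^G \cap H|\bigr)^2 \leqs |H|^2$ (the latter since $\sum_i |x_i^G \cap H|$ counts the prime-order elements of $H$), I would obtain the working bound $\widehat{Q}(G) \leqs |H|^2/M$. Next I would control $|H|$ via Mar\'oti's theorem on orders of primitive groups, which gives $|H| \leqs n^{1 + \lfloor \log_2 n\rfloor}$ outside a short list of exceptional groups (the large Mathieu groups and certain subgroups of $S_m \Wr S_r$ in product action), and control $M$ via a lower bound on the minimal degree $\mu(H)$ of $H$ on $\{1,\ldots,n\}$ drawn from classical work (Babai, Liebeck, Guralnick--Magaard) on primitive groups not containing $A_n$: every non-identity $x \in H$ has support of size at least $\mu(H)$ in $S_n$, so $|x^G| \geqs \tfrac{1}{2}\binom{n}{\mu(H)}$ and hence
\[
\widehat{Q}(G) \leqs \frac{2|H|^2}{\binom{n}{\mu(H)}}.
\]

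For the generic case governed by the Mar\'oti bound this quantity is less than $1/2$ once $n \geqs 42$, since $\binom{n}{\mu(H)}$ grows super-polynomially in $n$ while $|H|^2$ is polynomially bounded; a short numerical comparison anchors the base case and a monotonicity argument handles $n>42$. The main obstacle I anticipate is the Mar\'oti exceptions, most notably the wreath-product subgroups of $S_m \Wr S_r$ in product action on $k$-subsets or uniform partitions, where $|H|$ is too large and $\mu(H)$ too small for the generic estimate alone to succeed. For these families I would replace the crude estimate $|x^G \cap H| \leqs |H|$ by a finer count of elements of each prescribed cycle type exploiting the wreath-product structure, and cut down the list of remaining candidates using the classification of base-two primitive actions with alternating socle from \cite{BGS1}, treating the subset-action and partition-action cases in turn.
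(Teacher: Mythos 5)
Your overall strategy matches the paper's: reduce to $\widehat{Q}(G)<1/2$ via Lemma \ref{l:key}, bound $\widehat{Q}(G)\leqs |H|^2/M$ where $M$ is the minimal size of a prime-order class (this is exactly the criterion \eqref{e:star} in the paper), control $|H|$ by Mar\'oti's theorem and $M$ by a minimal-degree bound, and treat the exceptional families separately. However, there is a genuine quantitative gap in your lower bound for $M$. The estimate $|x^G|\geqs \tfrac{1}{2}\binom{n}{\mu(H)}$ counts only one permutation per support set and is orders of magnitude too weak: at $n=42$ one has $\mu(H)\geqs 21$, $\binom{42}{21}\approx 5.4\times 10^{11}$ and $|H|^2\leqs 42^{12}\approx 3\times 10^{19}$, so your bound gives $\widehat{Q}(G)\leqs 2|H|^2/\binom{n}{\mu(H)}\approx 10^{8}$, nowhere near $1/2$. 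A short computation shows the inequality $4|H|^2<\binom{n}{\lceil n/2\rceil}$ first holds around $n\approx 100$, so your argument leaves the entire range $42\leqs n\leqs 99$ untreated in the generic case, and that range contains many primitive point stabilisers. The fix is to bound centralisers rather than count supports: a prime-order permutation with support at least $n/2$ has $|C_{S_n}(x)|\leqs 2^{n/4}\lceil n/4\rceil!\,\lceil n/2\rceil!$ (the extremal case being a product of $n/4$ transpositions), which gives $M\geqs |G|/(2^{n/4}\lceil n/4\rceil!\,\lceil n/2\rceil!)$; with this sharper bound, \eqref{e:star} does hold for all $n\geqs 42$ (with only a small factor to spare at $n=42$, which is why the threshold sits there).

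A secondary issue: you anticipate only the Mar\'oti exceptions (product actions and subset actions), but the Guralnick--Magaard minimal-degree theorem has its own exceptional family that must be excluded before asserting $\mu(H)\geqs n/2$, namely the almost simple orthogonal groups over $\mathbb{F}_2$ acting on hyperplanes of the natural module. The paper handles these as a third separate case with explicit values of $\mu(H)$ slightly below $n/2$; your plan as written would silently apply the $n/2$ bound to groups for which it fails. Your proposed treatment of the wreath-product and subset cases by finer cycle-type counts is in the right spirit and is essentially what the paper does for those families.
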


\begin{proof}
First recall that $Q(G,2) \leqs \widehat{Q}(G)$ (see Lemma \ref{l:key}), so it suffices to show that $\widehat{Q}(G)<1/2$. We proceed as in the proof of \cite[Theorem 1.1]{BGS1}. 

To begin with, let us exclude the following three cases:
\begin{itemize}\addtolength{\itemsep}{0.2\baselineskip}
\item[(i)] $H = (S_\ell \Wr S_k) \cap G$ in the product action on $n=\ell^k$ points;
\item[(ii)] $H=A_\ell$ or $S_\ell$ acting on $m$-element subsets of $\{1, \ldots, \ell\}$ (so $n = \binom{\ell}{m}$);
\item[(iii)] $H$ is an almost simple orthogonal group over $\mathbb{F}_2$ acting on a set of hyperplanes of the natural module for $H$.
\end{itemize}
Firstly, a theorem of Mar\'{o}ti \cite[Theorem 1.1]{Maroti} gives
\[
|H| \leqs n^{1+\lfloor \log_2n \rfloor}.
\]
In addition, a result of Guralnick and Magaard \cite[Theorem 1]{GM} states that the minimal degree of $H$ is at least $n/2$, which implies that 
\[
|C_G(x)| \leqs 2^{n/4}\lceil n/4\rceil!\lceil n/2\rceil!
\]
for all $1 \ne x \in H$. Since $n \geqs 42$, it is now straightforward to verify that the bound in \eqref{e:star} is satisfied and thus the desired result follows immediately from Lemma \ref{l:q2}.

It remains to consider the cases (i), (ii) and (iii). Suppose (i) holds. We will assume $G=S_n$ and $k=2$, so $H = S_\ell \Wr S_2$, $n=\ell^2$ and $\ell \geqs 7$ (the other cases are similar and easier). For $i=1,2$ we define 
\[
Q_i = \sum_{x \in X_i}|C_G(x)||x^G \cap H|^2,
\]
where $X_1$ (respectively, $X_2$) is a set of representatives of the conjugacy classes of elements in $G$ of order $2$ (respectively, odd prime order). Note that 
$|G|\cdot \widehat{Q}(G) = Q_1+Q_2$. Let $i_2(H)$ be the number of involutions in $H$ and let $p(\ell)$ be the number of partitions of $\ell$. As explained in the proof of  \cite[Theorem 1.1]{BGS1}, we have
\[
Q_1 < i_2(H)^22^\ell\ell!(\ell^2-2\ell)!< (\ell!)^3(1+p(\ell))^42^\ell(\ell^2-2\ell)! < (\ell-1)^{4\ell}(\ell^2-2\ell)!
\]
and
\[
Q_2 < |H|^23^\ell\ell!(\ell^2-3\ell)! = 4(\ell!)^53^\ell(\ell^2-3\ell)!
\]
and it is easy to check that $Q_i<|G|/4$ for $i=1,2$. Therefore, $\widehat{Q}(G)<1/2$ and the result follows via Lemmas \ref{l:q2} and \ref{l:key}.

Next consider (ii), so $H = A_\ell$ or $S_\ell$ and $n = \binom{\ell}{m}$. We will assume $G = S_n$ and $m=2$, so $\ell \geqs 10$ (again, the other cases are similar and easier). If $x \in H$ has prime order, then one checks that $|C_G(x)| \leqs 2^{\ell-2}(\ell-2)!\lceil (\ell^2-5\ell+8)/2\rceil!$ and thus
\[
|H|^22^{\ell-2}(\ell-2)!\lceil (\ell^2-5\ell+8)/2\rceil! \leqs 2^{\ell-2}(\ell!)^2(\ell-2)!\lceil (\ell^2-5\ell+8)/2\rceil! < \frac{1}{2}|G|.
\]
Therefore \eqref{e:star} holds and we conclude that $\what{Q}(G)<1/2$.

Finally, let us turn to (iii). Let $\mu(H)$ denote the minimal degree of $H$ as a permutation group of degree $n$. As noted in the proof of \cite[Theorem 1.1]{BGS1}, if $H$ is of type $O_{2\ell+1}(2)$ or $O_{2\ell}^{+}(2)$, then $n = 2^{\ell-1}(2^\ell-1)$ (so $\ell \geqs 4$) and $\mu(H) = n/2-2^{\ell-2}=\a$. Therefore, $|H|<2^{2\ell^2+\ell}$ and $|C_G(x)| \leqs 2^{\a/2}\lceil \a/2\rceil!\lceil n-\a \rceil!$ for all $1 \ne x \in H$. In particular, it is straightforward to check that \eqref{e:star} holds. Similarly, if $H$ is of type $O_{2\ell}^{-}(2)$ then $n=(2^\ell+1)(2^{\ell-1}-1)$ and $\mu(H) = n/2-(2^{\ell-1}-1)/2=\b$, which implies that $|H|< 2^{2\ell^2-\ell+1}$ and $|C_G(x)| \leqs 2^{\b/2}\lceil \b/2\rceil!\lceil n-\b \rceil!$ for all $1 \ne x \in H$. Again, the desired result follows via \eqref{e:star}.
\end{proof}

\begin{lemma}\label{l:n421}
Theorem \ref{t:main1} holds if $n<42$.
\end{lemma}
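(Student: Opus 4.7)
The plan is a finite computation. For $n < 42$ there are only finitely many primitive subgroups $H\cap T$ of $S_n$, and the classification of base-two actions in \cite{BGS1} together with the list of primitive groups of degree at most $41$ (available in {\sc Magma} \cite{magma}) gives a short, explicit list of pairs $(G,H)$ that must be considered. So the strategy is to run through this list and verify the bound $Q(G,2)<1/2$ case by case, treating the putative exception $(S_7,\mathrm{AGL}_1(7))$ separately.

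For each candidate pair $(G,H)$, I would first compute the upper bound $\widehat{Q}(G)$ from Lemma \ref{l:key} by looping over the conjugacy classes of elements of prime order in $G$, using the character table (or direct enumeration) to read off $|x^G|$ and $|x^G\cap H|$. In the overwhelming majority of cases the bound $\widehat{Q}(G)<1/2$ is comfortably satisfied and we are done via Lemma \ref{l:q2}. The borderline cases, where $\widehat Q(G)$ fails to beat $1/2$, are few; for each of them I would compute $Q(G,2)$ exactly by enumerating the orbits of $H$ on $\Omega$ and determining which are regular (equivalently, counting ordered pairs $(\alpha_1,\alpha_2)\in\Omega^2$ with $G_{\alpha_1,\alpha_2}\ne 1$). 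This exact computation yields $Q(G,2)<1/2$ in every instance except $(G,H)=(S_7,\mathrm{AGL}_1(7))$, where a direct count gives $Q(G,2)=13/20$.

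It remains to establish the ``common neighbour'' property for the exceptional pair $(S_7,\mathrm{AGL}_1(7))$, where Lemma \ref{l:q2} no longer applies. Here I would simply construct the Saxl graph $\Sigma(G)$ explicitly in {\sc Magma}: $|\Omega|=|G:H|=30$, and using the description of regular $H$-orbits from the orbit computation above, one obtains a vertex-transitive graph of known valency on $30$ vertices. A direct check (either of all pairs of vertices, or, by vertex-transitivity, of all pairs with a fixed first vertex) confirms that every pair of vertices has a common neighbour, so $\mathrm{diam}(\Sigma(G))=2$ and every edge lies in a triangle.

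The only mildly delicate point is bookkeeping: one must be careful, when $H\cap T$ is primitive on $\{1,\ldots,n\}$ but $H\ne H\cap T$, to include the diagonal and field-type extensions in $S_n$ permitted by the classification of \cite{BGS1}, since these can change the regular orbit count and hence both $Q(G,2)$ and the valency of $\Sigma(G)$. With the {\sc Magma} primitive groups database this is routine, and I expect no other obstacles.
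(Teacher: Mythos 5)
Your proposal follows essentially the same route as the paper: a finite case check driven by the bound $\widehat{Q}(G)$ of Lemma \ref{l:key} (the paper first dispatches most cases with the cruder criterion \eqref{e:star} before computing $\widehat{Q}(G)$, but this is immaterial), exact computation of $Q(G,2)$ via regular orbit counts in the borderline cases (which is how the paper handles $(A_{10},{\rm M}_{10})$ and $(A_9,3^2{:}2A_4)$, where $\widehat{Q}(G)>1/2$ but $H$ has two regular orbits), and a direct common-neighbour verification for the genuine exception $(S_7,{\rm AGL}_1(7))$. One slip: for that exceptional pair the degree is $|G:H|=5040/42=120$, not $30$; with this corrected ($H$ has a unique regular orbit, giving valency $42$ and $Q(G,2)=13/20$) your argument matches the paper's.
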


\begin{proof}
First assume $G=S_n$. For $13 \leqs n \leqs 41$, it is easy to check that \eqref{e:star} holds with the aid of {\sc Magma} \cite{magma}, unless $n \in \{14,18, 21,22\}$. Here the relevant cases $(G,H)$ are as follows:
\[
(S_{22}, {\rm M}_{22}.2),\; (S_{21}, {\rm L}_{3}(4).S_3), \; (S_{18}, {\rm PGL}_{2}(17)), \;  (S_{14}, {\rm PGL}_{2}(13)).
\]
In each of these cases, it is straightforward to check that $\widehat{Q}(G)<1/2$ as required. Finally, let us consider the relevant groups $G=S_n$ with $b(G)=2$ and $5 \leqs n \leqs 12$. Here $(G,H)$ is one of the following:
\[
(S_{12}, {\rm PGL}_{2}(11)), \; (S_{11}, {\rm AGL}_{1}(11)), \; (S_{7}, {\rm AGL}_{1}(7)).
\]
The first two cases are handled as above. However, in the latter case we find that $\widehat{Q}(G)=73/60$, so this needs further attention. Here $G$ has degree $120$ and one can check that 
$H$ has a unique regular orbit, so $\Sigma(G)$ has valency $42$ and we get $Q(G,2) = 13/20$. To handle this case, we can use {\sc Magma} to show that for a fixed $\a \in \O$, if $\b \in \O$ is any other point then there exists $\gamma \in \O$ such that $\{\a,\gamma\}$ and $\{\b,\gamma\}$ are bases. It follows that any two vertices in $\Sigma(G)$ have a common neighbour.  

Similar reasoning applies when $G=A_n$. If $13 \leqs n \leqs 41$ then \eqref{e:star} holds unless $(G,H)$ is one of the following:
\[
(A_{32}, {\rm AGL}_{5}(2)),\; (A_{28}, {\rm Sp}_{6}(2)),\; (A_{24}, {\rm M}_{24}),\; (A_{23}, {\rm M}_{23}),\; (A_{22}, {\rm M}_{22}),
\]
\[(A_{17}, {\rm L}_{2}(16).4),\; (A_{16}, {\rm AGL}_{4}(2)),\; (A_{15}, A_8), \; (A_{14}, {\rm L}_{2}(13)), \; (A_{13}, {\rm L}_{3}(3)).
\] 
In each of these cases, it is easy to check that $\widehat{Q}(G)<1/2$. Finally, let us assume $5 \leqs n \leqs 12$. Here the only options with $b(G)=2$ are $(G,H) = (A_{10},{\rm M}_{10})$ and $(A_{9}, 3^2{:}2A_4)$. In both cases, we have $\widehat{Q}(G)>1/2$, but one can check that $H$ has two regular orbits and this gives $Q(G,2)<1/2$. 

Finally, suppose $n=6$ and $G$ is one of ${\rm Aut}(A_6)$, ${\rm PGL}_{2}(9)$ or ${\rm M}_{10}$. It is straightforward to check that there are no cases such that $b(G)=2$ and $H \cap T$ acts primitively on $\{1, \ldots, 6\}$.
\end{proof}

\begin{corollary}\label{c:ham}
Let $G$ be an almost simple primitive permutation group with socle $T = A_n$, point stabiliser $H$ and base size $2$. Assume $H \cap T$ acts primitively on $\{1,\ldots, n\}$. Then the Saxl graph of $G$ is Hamiltonian.
\end{corollary}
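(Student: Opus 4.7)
The plan is to deduce Corollary \ref{c:ham} directly from Theorem \ref{t:main1} via Lemma \ref{l:q2}(v). By Theorem \ref{t:main1}, we have $Q(G,2)<1/2$ in every case except the single exception $(G,H) = (S_7, \mathrm{AGL}_1(7))$. Whenever $Q(G,2)<1/2$, the integer $t$ appearing in Lemma \ref{l:q2} satisfies $t \geqs 2$, so part (v) of that lemma yields immediately that $\Sigma(G)$ is Hamiltonian. Recall that the proof of Lemma \ref{l:q2}(v) is a one-line application of Dirac's theorem: the bound $Q(G,2)<1/2$ translates to $\mathrm{val}(G)>|\Omega|/2$, which is sufficient. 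Thus the bulk of the corollary is an essentially free consequence of the work already carried out to establish Theorem \ref{t:main1}.

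The remaining case $(G,H) = (S_7, \mathrm{AGL}_1(7))$ requires separate treatment, since here $|\Omega|=120$ while $\mathrm{val}(G)=42<|\Omega|/2$, so Dirac's theorem does not apply. My proposal is to settle this case by a direct computation in \textsc{Magma}. The graph $\Sigma(G)$ is explicit and small (and was already constructed in the proof of Lemma \ref{l:n421} to verify the common-neighbour property), so a Hamilton cycle can be produced and verified by machine. This step can be recorded as a single computational assertion, in the same spirit as the computations invoked elsewhere in the paper.

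The main obstacle, if one prefers to avoid a \textsc{Magma} check, is to give a human proof of Hamiltonicity for this particular graph. One natural line of attack would be to exploit the fact that $G = S_7$ acts transitively on $\Sigma(G)$, so by Sabidussi's theorem $\Sigma(G)$ is a Cayley graph on any regular subgroup of $G$ (for instance, a transitive subgroup of order $120$ inside $S_7$, should one exist in the relevant action), after which one could attempt to apply the extensive literature on Hamiltonicity of Cayley graphs of small groups. However, this route is substantially more involved than the computational verification, and given the modest size of the example there is no serious gain in insisting on it.
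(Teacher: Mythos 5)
Your proposal is correct and follows essentially the same route as the paper: Theorem \ref{t:main1} gives $Q(G,2)<1/2$ in all but one case, whence Lemma \ref{l:q2}(v) (Dirac's theorem) applies, and the exceptional case $(S_7,{\rm AGL}_1(7))$ is handled by a direct {\sc Magma} search for a Hamiltonian cycle, exactly as in the paper. Your correct observation that ${\rm val}(G)=42<60=|\Omega|/2$ in the exceptional case explains why the separate computational check is genuinely needed there.
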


\begin{proof}
By Theorem \ref{t:main1}, either $Q(G,2)<1/2$ or $(G,H) = (S_{7}, {\rm AGL}_{1}(7))$. In the first case, the Hamiltonicity of $\Sigma(G)$ follows from Lemma \ref{l:q2}. In the latter, one can use {\sc Magma} to find a Hamiltonian cycle in $\Sigma(G)$ by random search.
\end{proof}

\begin{remark}\label{r:imp}
Let $G \leqs {\rm Sym}(\O)$ be an almost simple primitive group with socle $T = A_n$ and point stabiliser $H$. As noted above, there are examples where $b(G)=2$ and $H \cap T$ acts imprimitively on $\{1, \ldots, n\}$. Here $\O$ is the set of partitions of $\{1, \ldots, n\}$ into $\ell$ parts of size $k$, and the precise conditions on $k$ and $\ell$ were determined by James \cite{James2006} using a constructive approach (see case (b) in the discussion preceding the statement of Theorem \ref{t:main1}). It seems rather difficult to compute the valency of $\Sigma(G)$ in these cases, both theoretically and computationally. 

For example, it is easy to see that $\widehat{Q}(G)>1$, so our probabilistic approach is not effective. Indeed, if $G = S_n$, $H = S_k \Wr S_\ell$ and $x \in G$ is a transposition, where $k$ and $\ell$ satisfy the bounds in (b) above, then 
$$\widehat{Q}(G)>|x^G|\cdot {\rm fpr}(x,G/H)^2 = \frac{1}{2}n(n-1) \cdot \left(\frac{k-1}{n-1}\right)^2 = \frac{1}{2}\left(\frac{n}{n-1}\right)(k-1)^2>1.$$

In addition, computational methods are difficult to apply due to the size of $\O$. The  smallest degree arises when $G = A_{18}$ and $H = (S_3 \Wr S_6) \cap G$, in which case $|\O| = 190590400$. Using {\sc Magma}, we can find a complete set $R$ of $(H,H)$ double coset representatives (there are $103$ double cosets in total) and we deduce that $H$ has a unique regular orbit on $\O$ (we thank Eamonn O'Brien for his assistance with this double coset computation). Therefore  
\[
Q(G,2) = \frac{543107}{595595}>\frac{1}{2}.
\] 
In order to verify Conjecture \ref{c:diam2} in this case, it suffices to show that for each $x \in R$, there exists $y \in G$ such that 
\[
H \cap H^y = H^x \cap H^y = 1,
\]
and this is easily checked by random search. The next smallest degree is $|\O| = 36212176000$, which corresponds to the case where $G = A_{21}$ and $H = (S_3 \Wr S_7) \cap G$. Here we have been unable to compute a complete set of $(H,H)$ double coset representatives and so we have not verified Conjecture \ref{c:diam2} in this case. 

Given the significant limitations of the probabilistic and computational methods, it seems reasonable to expect that a constructive approach might be the best way to investigate the Saxl graphs of these groups.
\end{remark}

\section{Sporadic groups}\label{s:spor}

In this section we investigate Conjecture \ref{c:diam2} for almost simple sporadic groups. 
Let $G \leqs {\rm Sym}(\O)$ be an almost simple primitive group with socle $T$ and point stabiliser $H$, where $T$ is a sporadic simple group. The exact base size of $G$ has been computed in \cite{BOW} (also see \cite{NNOW}) and thus the groups with $b(G)=2$ are known.

In order to state our first result, define the following collection of sporadic groups:
\[
\mathcal{A} = \{{\rm M}_{11},{\rm M}_{12},{\rm M}_{22},{\rm M}_{23},{\rm M}_{24}, {\rm J}_{1}, {\rm J}_{2}, {\rm J}_{3}, {\rm HS}, {\rm Suz}, {\rm McL}, {\rm Ru}, {\rm He}, {\rm O'N}, {\rm Co}_{2}, {\rm Co}_{3},   {\rm Fi}_{22} \}.
\]

\begin{theorem}\label{t:main2}
Let $G \leqs {\rm Sym}(\O)$ be an almost simple primitive group with socle $T$ and point stabiliser $H$, where $T \in \mathcal{A}$ is a sporadic simple group. If $b(G)=2$ then any two vertices in the Saxl graph of $G$ have a common neighbour.  
\end{theorem}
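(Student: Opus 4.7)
The strategy will be to apply the probabilistic criterion of Lemma~\ref{l:q2}: if we can show that $Q(G,2)<1/2$ for each relevant pair $(G,H)$, then setting $t=2$ gives the conclusion that any two vertices in $\Sigma(G)$ have a common neighbour. In view of Lemma~\ref{l:key}, it suffices to verify the upper bound $\widehat{Q}(G)<1/2$, and we can often check this via the stronger criterion \eqref{e:star}. The first step is to use the complete classification of base-two almost simple primitive groups with sporadic socle in \cite{BOW} (see also \cite{NNOW}) to enumerate all candidate pairs $(G,H)$ with $T\in \mathcal{A}$.

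For each pair $(G,H)$, I would then attempt to compute $\widehat{Q}(G)$ explicitly. The conjugacy class data $|x^G|$ and the fusion information $|x^G\cap H|$ for elements of prime order are stored in the \textsf{GAP} Character Table Library \cite{CTblLib}, so the sum
\[
\widehat{Q}(G)=\sum_{i=1}^{k}|x_i^G|\cdot{\rm fpr}(x_i,G/H)^2=\sum_{i=1}^{k}\frac{|x_i^G\cap H|^2}{|x_i^G|}
\]
reduces to a finite character-theoretic computation. I expect that for the vast majority of the groups in $\mathcal{A}$, the point stabiliser $H$ is small relative to $|G|$ (in fact, the list $\mathcal{A}$ is essentially those sporadic socles for which $|H|$ is moderate enough to make this feasible), so the crude bound \eqref{e:star} should already succeed in most instances by a routine case-by-case check. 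When \eqref{e:star} fails but $\widehat{Q}(G)<1/2$, the more delicate class-by-class computation above will suffice, and Lemma~\ref{l:q2}(i) then delivers the result.

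The main obstacle will be handling the residual pairs $(G,H)$ for which $\widehat{Q}(G)\geqs 1/2$; these typically occur when the subgroup $H$ is large relative to $G$, so that a few classes of elements with high fixed point ratio dominate the sum. For such exceptional cases (for example, small-degree actions of $\mathrm{M}_{11}$ or $\mathrm{J}_2$, and certain actions of $\mathrm{M}_{24}$, $\mathrm{HS}$, or $\mathrm{McL}$ with large maximal subgroups), the probabilistic approach must be supplemented with direct computation. The plan is to construct $G$ and $H$ inside {\sc Magma} using the standard permutation representations, compute a system of $(H,H)$ double coset representatives $\{g_1,\dots,g_s\}$, and then for each $i$ with $H\cap H^{g_i}\neq 1$ verify by random search or exhaustive enumeration that there exists $y\in G$ with $H\cap H^y=H^{g_i}\cap H^y=1$. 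This shows precisely that $\a$ and $\a^{g_i}$ admit a common neighbour $\a^y$, and since the $g_i$ exhaust the $H$-orbits on $\O$, the property extends to all pairs by transitivity.

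The hardest instances will be the larger-degree actions within $\mathcal{A}$, namely those coming from $\mathrm{Co}_2$, $\mathrm{Fi}_{22}$, $\mathrm{O'N}$ and the biggest maximal subgroups of $\mathrm{Co}_3$ and $\mathrm{Ru}$, where both the character-theoretic bound can be tight and the double coset computation is expensive. For these I would combine partial probabilistic estimates (restricting the sum in $\widehat{Q}(G)$ to the ``dangerous'' classes and bounding the remainder) with targeted random search for common neighbours of specific representatives. The choice of $\mathcal{A}$ in the theorem is calibrated exactly to the range where one of these two approaches is guaranteed to succeed; the groups excluded from $\mathcal{A}$ (the Baby Monster, Monster, etc.) are precisely those where neither method is currently feasible, and they are deliberately left for future work.
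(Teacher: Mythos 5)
Your proposal is correct and follows essentially the same route as the paper: compute $\widehat{Q}(G)$ exactly from the \textsf{GAP} Character Table Library using the stored fusion data, apply Lemma \ref{l:q2} when $\widehat{Q}(G)<1/2$, and for the residual cases verify the common-neighbour property directly in {\sc Magma} by random search over representatives of the $H$-orbits (equivalently, $(H,H)$-double cosets). The only difference is that the paper inserts an intermediate step for the exceptional cases -- computing the exact number $r$ of regular $H$-orbits so that ${\rm val}(G)=r|H|>n/2$ already settles many of them via Lemma \ref{l:q2} before any direct search is needed -- but this is an optimisation within the same method rather than a different argument.
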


\begin{proof}
Set $n=|\O|$ and assume $b(G)=2$. As before, let ${\rm val}(G)$ be the valency of $\Sigma(G)$ and recall Lemma \ref{l:1}(v), which states that ${\rm val}(G) = r|H|$, where $r \geqs 1$ is the number of regular orbits of $H$ on $\O$. To prove the theorem, we will adopt a computational approach, using both \textsf{GAP} \cite{GAP} and {\sc Magma} \cite{magma}. This will rely on the probabilistic methods discussed in Section \ref{ss:prob} and we freely use the notation introduced in that section. 

First we use \textsf{GAP} to compute the exact value of $\widehat{Q}(G)$, exploiting the fact that the Character Table Library \cite{CTblLib} contains the character tables of $G$ and $H$, and also the fusion of $H$-classes in $G$, which allows us to compute precise fixed point ratios. As before, if $\widehat{Q}(G)<1/2$ then $Q(G,2)<1/2$ and the result follows from Lemma \ref{l:q2}. The remaining cases that need to be considered are listed in Table \ref{tab:spor0}, where we record $\widehat{Q}(G)$ to three decimal places.

To handle these cases, we first construct $G$ and $H$ in {\sc Magma}, using a suitable permutation representation for $G$ provided in the Web Atlas \cite{WebAt}. Next we use the command \textsf{CosetAction} to first compute the appropriate action of $G$ on the cosets of $H$, and then to determine the orbits of $H$. In particular, this allows us to compute $r$ (the number of regular orbits of $H$) and thus ${\rm val}(G) = r|H|$. If ${\rm val}(G)>n/2$ then $Q(G,2)<1/2$ and we apply Lemma \ref{l:q2}. In each case, $r$ is recorded in the sixth column of Table \ref{tab:spor0}, and in the final column we write \texttt{v} if ${\rm val}(G)>n/2$. 

Finally, suppose ${\rm val}(G) \leqs n/2$. In these cases, we use {\sc Magma} to show directly that any two vertices in $\Sigma(G)$ have a common neighbour. To do this, we fix a point $\a \in \O$ and we calculate a set of representatives $\{\a_1, \ldots, \a_k\}$ for the non-trivial orbits of $H=G_{\a}$. Then for each $i \in \{1, \ldots, k\}$, we use random search to find a 
point $\gamma \in \O$ such that $\{\a,\gamma\}$ and $\{\a_i,\gamma\}$ are bases. The result follows. 
\end{proof}

{\small
\begin{table}
\[
\begin{array}{lllllll} \hline
G & H & n & \widehat{Q}(G) & {\rm val}(G) & r & \\ \hline
{\rm M}_{11} & 2.S_4 & 165 & 1.169 & 48 & 1 &  \\
{\rm M}_{12} & A_4 \times S_3 & 1320 & 0.540 & 936 & 13 & \texttt{v} \\
{\rm M}_{12}.2 & S_4 \times S_3 & 1320 & 1.268 & 576 & 4 &  \\
 & S_5 & 1584 & 0.906 & 720 & 6 &  \\
{\rm J}_{1} & 2^3.7.3 & 1045 & 0.661 & 840 & 5 & \texttt{v} \\
& 2 \times A_5 & 1463 & 0.774 & 840 & 7 & \texttt{v} \\
& 19{:}6 & 1540 & 0.505 & 1026 & 9 & \texttt{v} \\
{\rm J}_{2} & {\rm L}_{3}(2){:}2 & 1800 & 1.924 & 336 & 1 &  \\
& 5^2{:}D_{12} & 2016 & 1.539 & 600 & 2 &  \\
{\rm J}_{2}.2 & 5^2{:}(4 \times S_3) & 2016 & 1.539 & 600 & 1 &  \\
{\rm J}_{3} & {\rm L}_2(19) & 14688 & 1.980 & 3420 & 1 &  \\
& 2^4{:}(3 \times A_5) & 17442 & 1.546 & 8640 & 3 &  \\ 
& {\rm L}_2(17) & 20520 & 1.266 & 9792 & 4 &  \\
& (3 \times A_6){:}2 & 23256 & 1.496 & 4320 & 2 &  \\ 
& 2^{1+4}{:}A_5 & 26163 & 0.837 & 13440 & 7 & \texttt{v} \\
{\rm J}_{3}.2 & 2^4{:}(3 \times A_5).2 & 17442 & 2.248 & 5760 & 1 &  \\
& {\rm L}_2(17) \times 2 & 20520 & 2.422 & 4896 & 1 &  \\ 
& 3^{2+1+2}{:}8.2 & 25840 & 0.826 & 11664 & 3 &  \\
& 2^{1+4}.S_5 & 26163 & 1.539 & 3840 & 1 &  \\
& 2^{2+4}{:}(S_3 \times S_3) & 43605 & 1.078 & 23040 & 10 & \texttt{v} \\ 
{\rm HS} & 2 \times A_6.2^2 &  15400 & 2.588 & 5760 & 2 &   \\
& 5{:}4 \times A_5 & 36960  &  0.607 & 22800 & 19 & \texttt{v} \\
{\rm HS}.2 & 5^{1+2}.[2^5] & 22176 & 0.718 & 12000 & 3 & \texttt{v} \\ 
& 5{:}4 \times S_5 & 36960 & 0.806 & 19200 & 8 & \texttt{v} \\ 
{\rm Suz} & {\rm M}_{12}{:}2 & 2358720 & 2.680 & 380160 & 2 &   \\ 
& 3^{2+4}{:}2.(A_4 \times 2^2).2 & 3203200 & 0.535 & 2239488 & 16 & \texttt{v} \\ 
{\rm Suz}.2 & 3^{2+4}{:}2.(S_4 \times D_8) & 3203200 & 1.931 & 1119744 & 4 &  \\
& ({\rm PGL}_{2}(9) \times A_5).2  & 10378368 & 0.658 & 6048000 & 70 & \texttt{v} \\
{\rm McL} & {\rm M}_{11} & 113400  & 1.514  & 7920 & 1 &  \\ 
{\rm McL}.2 & 2^{2+4}{:}(S_3 \times S_3) & 779625 & 0.540 & 525312 & 228 & \texttt{v} \\ 
{\rm He} & 2^{1+6}.{\rm L}_{3}(2) & 187425 & 1.926  & 107520 & 5 & \texttt{v} \\ 
& 7^2{:}2.{\rm L}_{2}(7) & 244800 & 1.164 & 164640 & 10 &  \texttt{v} \\ 
& 3.S_7 & 266560 & 1.982 & 75600 & 5 &  \\ 
{\rm He}.2 & 2^{1+6}.{\rm L}_{3}(2).2 & 187425 & 2.678 & 43008 & 1 &  \\ 
& 7^2{:}2.{\rm L}_2(7).2 & 244800 & 1.741 & 98784 & 3 &  \\ 
 & 3.S_7 \times 2 & 266560 & 2.861 & 30240 & 1 &  \\ 
& (S_5 \times S_5).2 & 279888 & 3.547 & 86400 & 3 &  \\
& 2^{4+4}.(S_3 \times S_3).2 & 437325 & 2.872 & 92160 & 5 &  \\
& S_4 \times {\rm L}_3(2).2 & 999600 & 0.649 & 637056 & 79 & \texttt{v} \\ 
{\rm O'N} & {\rm J}_1 & 2624832 & 1.126 & 1228920 & 7 &  \\
& 4.{\rm L}_3(4).2 &  2857239 & 1.232 & 967680 & 6 &  \\
{\rm O'N}.2 & {\rm J}_1 \times 2 &  2624832 & 1.943 & 351120 & 1 &  \\
{\rm Co}_{2} & 3^{1+4}{:}2^{1+4}.S_5 & 45337600 & 0.599  & 29859840 & 32 & \texttt{v} \\ 
{\rm Co}_{3} & {\rm L}_3(4).D_{12} & 2049300 & 3.878  & 241920 & 1 &  \\ 
& 2 \times {\rm M}_{12} & 2608200 & 2.378 & 380160 & 2 &  \\ 
{\rm Fi}_{22} & S_{10} & 17791488 & 4.684 & 3628800 & 1 &  \\ \hline
\end{array}
\] 
\caption{The base-two groups with ${\rm soc}(G) \in \mathcal{A}$ and $\widehat{Q}(G) \geqs 1/2$}
\label{tab:spor0}
\end{table}}

For the remaining almost simple sporadic groups, we have partial results. Set
\[
\mathcal{B} = \{{\rm J}_4, {\rm Ly}, {\rm Co}_{1}, {\rm Fi}_{23}, {\rm Fi}_{24}', {\rm HN}, {\rm Th}, \mathbb{B}\}.
\]

\begin{proposition}\label{p:spor2}
Let $G \leqs {\rm Sym}(\O)$ be an almost simple primitive group with socle $T$ and point stabiliser $H$, where $T \in \mathcal{B}$ is a sporadic simple group. Assume $b(G)=2$. Then one of the following holds:
\begin{itemize}\addtolength{\itemsep}{0.2\baselineskip}
\item[(i)] Any two vertices in the Saxl graph of $G$ have a common neighbour. 
\item[(ii)] $(G,H)$ is one of the cases in Table \ref{tab:spor}.
\end{itemize}
\end{proposition}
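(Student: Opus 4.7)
The plan is to mimic the strategy used in the proof of Theorem \ref{t:main2} as closely as possible, passing through the same three-stage filter and collecting in Table \ref{tab:spor} exactly those cases that resist all three stages. First, I would use the \textsf{GAP} Character Table Library \cite{CTblLib}, which stores the character tables of $G$ and $H$ together with the class fusions, to evaluate $\widehat{Q}(G)$ exactly via Lemma \ref{l:key}. Whenever $\widehat{Q}(G)<1/2$ we have $Q(G,2)<1/2$, and Lemma \ref{l:q2} immediately yields the desired common-neighbour property (in fact with the stronger consequences listed there). This disposes of the bulk of the cases with socle in $\mathcal{B}$, leaving a short list of pairs $(G,H)$ with $\widehat{Q}(G) \geqslant 1/2$ to analyse individually.

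For each surviving pair I would try to compute the true valency ${\rm val}(G) = r|H|$ of $\Sigma(G)$ by constructing the coset action $G \to {\rm Sym}(G/H)$ in {\sc Magma} (using a suitable permutation representation from the Web Atlas \cite{WebAt} and the command \textsf{CosetAction}) and determining the orbit lengths of $H$ on $\Omega$, from which $r$ is read off as the number of orbits of length $|H|$. Any case with $r|H|>n/2$ satisfies $Q(G,2)<1/2$ and is again settled by Lemma \ref{l:q2}. For pairs that survive even this, I would attempt a direct verification of the conjecture: fix $\alpha\in\Omega$, compute a transversal $\{\alpha_1,\ldots,\alpha_k\}$ for the non-trivial $H$-orbits on $\Omega$, and for each $i$ search (deterministically by enumeration when feasible, otherwise by random sampling) for $\gamma\in\Omega$ with $G_{\alpha,\gamma}=G_{\alpha_i,\gamma}=1$; by transitivity of $G$ on $\Omega$, such a $\gamma$ shows that every pair of vertices has a common neighbour.

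The main obstacle will be computational capacity for the very largest sporadic groups in $\mathcal{B}$, notably $\mathbb{B}$, ${\rm Fi}_{23}$, ${\rm Fi}_{24}'$, ${\rm Co}_{1}$ and possibly ${\rm J}_{4}$ and ${\rm Ly}$ in some actions. Here the degree $n = |G:H|$ can be astronomical, so constructing the coset action explicitly, enumerating orbits of $H$, or running through double coset representatives may be beyond reach on standard hardware. These are precisely the cases that will have to be recorded in Table \ref{tab:spor} as genuine open cases where we could neither bound $\widehat{Q}(G)$ below $1/2$, nor compute $r$, nor certify a common neighbour by direct search. In this sense the proof is intrinsically a case analysis, and the content of the proposition is that everything not listed in Table \ref{tab:spor} can be handled by at least one of the three stages; I expect the GAP-level computation of $\widehat{Q}(G)$ together with the fusion data to already eliminate the overwhelming majority of pairs, so that the table's exceptions correspond to a small number of actions where both $\widehat{Q}(G)$ is large and the coset action is too big to manipulate.
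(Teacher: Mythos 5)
Your proposal is correct and follows essentially the same route as the paper: the paper's actual proof consists only of your first stage, namely computing $\widehat{Q}(G)$ exactly from the \textsf{GAP} Character Table Library and the stored class fusions, invoking Lemma \ref{l:key} and Lemma \ref{l:q2} when $\widehat{Q}(G)<1/2$, and recording the survivors in Table \ref{tab:spor}. Your further stages (computing $r$ via \textsf{CosetAction} and direct search for common neighbours) are exactly the ones the paper uses for the groups in $\mathcal{A}$ but explicitly sets aside for $\mathcal{B}$ in Remark \ref{r:spor} as computationally infeasible at these degrees, which matches your own assessment of the obstacles.
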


\begin{proof}
As in the proof of Theorem \ref{t:main2}, we first observe that the character tables of $G$ and $H$ are available in the \textsf{GAP} Character Table Library, together with the fusion map on classes, which allows us to compute $\widehat{Q}(G)$ precisely. If $\widehat{Q}(G)<1/2$ then the conclusion in (i) holds via Lemma \ref{l:q2}; the remaining cases are the ones listed in Table \ref{tab:spor}.
\end{proof}

{\small
\begin{table}
\[
\begin{array}{llll} \hline
G & H & n & \widehat{Q}(G) \\ \hline 
{\rm J}_4 & 2^{3+12}.(S_5 \times {\rm L}_{3}(2)) & 131358148251 &  0.878  \\
{\rm Ly} & 5^3.{\rm L}_{3}(5) & 1113229656 & 0.839   \\
& 2.A_{11} & 1296826875 & 1.059  \\
{\rm Co}_{1} & 3^2.{\rm U}_4(3).D_8 & 17 681 664 000 & 1.439  \\
& 3^6{:}2.{\rm M}_{12} & 30 005 248 000 & 0.679  \\ 
& (A_5 \times {\rm J}_2){:}2 & 57 288 591 360 & 0.576  \\
{\rm Fi}_{23} & [3^{10}].({\rm L}_{3}(3) \times 2) & 6 165 913 600  & 3.923  \\ 
& S_{12} & 8 537 488 128 & 4.918 \\ 
& (2^2 \times 2^{1+8}).(3 \times {\rm U}_4(2)).2 & 12 839 581 755 & 4.513 \\ 
& 2^{6+8}{:}(A_7 \times S_3) & 16 508 033 685 & 2.293 \\ 
{\rm Fi}_{24}' & 2^2.{\rm U}_6(2).3.2 & 5686767482760 & 4.892 \\
& 2^{1+12}.3.{\rm U}_4(3).2 & 7819305288795 & 2.492  \\
{\rm Fi}_{24} & 2^{1+12}.3.{\rm U}_4(3).2^2 & 7819305288795 & 4.512 \\
& 3^{2+4+8}.(S_5 \times 2S_4) & 91122337546240 & 0.545  \\
& S_4 \times \O_8^{+}(2){:}S_3 & 100087107696576 & 0.821 \\
{\rm HN} & 2^{1+8}.(A_5 \times A_5).2 & 74 064 375 & 1.876  \\
& 5^{1+4}{:}2^{1+4}.5.4 & 136 515 456 & 0.606  \\ 
& 2^6.{\rm U}_4(2) & 164 587 500 & 2.156  \\
& (A_6 \times A_6).D_8 & 263 340 000 & 1.142  \\ 
{\rm HN}.2 & 2^{1+8}.(A_5 \times A_5).2^2 & 74064375 &  2.726  \\ 
& 5{:}4 \times {\rm U}_3(5){:}2 & 108345600 & 0.997  \\
& 5^{1+4}.2^{1+4}.5.4.2 & 136515456 & 0.689  \\
& 2^6.{\rm U}_4(2).2 & 164587500 & 2.554  \\
& (S_6 \times S_6).2^2 & 263340000 & 1.445  \\
& 2^{3+2+6}.(S_3 \times {\rm L}_3(2)) & 264515625 & 0.710  \\
{\rm Th} & 2^{1+8}.A_9 &  976841775 & 1.524   \\
& {\rm U}_3(8).6 & 2742012000 & 0.747  \\
& (3 \times G_2(3)){:}2 & 3562272000  & 0.524 \\
\mathbb{B} & [2^{30}].{\rm L}_{5}(2) & 386968944618506250 & 5.311 \\
& [2^{35}].(S_5 \times {\rm L}_{3}(2)) & 5998018641586846875 & 0.706 \\
& S_3 \times {\rm Fi}_{22}.2 & 5362800438804480000 & 0.669 \\ \hline
\end{array}
\] 
\caption{The base-two groups with ${\rm soc}(G) \in \mathcal{B}$ and $\widehat{Q}(G) \geqs 1/2$}
\label{tab:spor}
\end{table}}

\begin{remark}\label{r:spor}
The computational methods used in the proof of Theorem \ref{t:main2} are difficult to apply when $T \in \mathcal{B}$ because the degree $n$ of $G$ is large. In particular, the {\sc Magma} command \textsf{CosetAction} is not an effective tool for determining the orbits of $H$, so it is difficult to compute $r$ and we cannot find a complete set of orbit representatives, which would allow us to directly verify that any two vertices in $\Sigma(G)$ have a common neighbour (equivalently, we need a complete set of $(H,H)$-double coset representatives). Indeed, it is particularly difficult to find representatives of the smallest $H$-orbits, although the \textsf{GAP} package \textsf{orb} (see \cite{orbo,orb}) might be a useful tool in some cases. We do not pursue this here.
\end{remark}

The collections $\mathcal{A}$ and $\mathcal{B}$ cover $25$ of the $26$ sporadic simple groups; the remaining group is of course the Monster $G = \mathbb{M}$, which presents its own unique challenges. To date $44$ conjugacy classes of maximal subgroups $H$ of $G$ have been determined and it is known that any additional maximal subgroup is almost simple with socle ${\rm L}_2(13)$ or  ${\rm L}_2(16)$ (see Wilson's recent survey article \cite{Wil}). The character table of $G$ is available in the \textsf{GAP} Character Table Library, together with the character table of many, but not all, of its maximal subgroups, as well as class fusion information that allows one to compute $\widehat{Q}(G)$ (in some cases, we work with the \textsf{GAP} command \textsf{PossibleClassFusions}). 
In this way, we can establish the following partial result.

\vspace{1mm}

\begin{proposition}\label{p:mon}
Let $G \leqs {\rm Sym}(\O)$ be a primitive group with $G=\mathbb{M}$ and point stabiliser $H$. Assume $b(G)=2$. Then one of the following holds:
\begin{itemize}\addtolength{\itemsep}{0.2\baselineskip}
\item[{\rm (i)}] Any two vertices in the Saxl graph of $G$ have a common neighbour.  
\item[{\rm (ii)}] $H$ is a $2$-local or $3$-local subgroup of $G$.
\end{itemize}
\end{proposition}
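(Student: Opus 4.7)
The plan is to follow the probabilistic strategy used in Theorem~\ref{t:main2} and Proposition~\ref{p:spor2}, applying Lemma~\ref{l:key} to estimate
\[
Q(G,2) \leqs \widehat{Q}(G) = \sum_{i=1}^{k} |x_i^G|\cdot{\rm fpr}(x_i,G/H)^2,
\]
and concluding via Lemma~\ref{l:q2} that any two vertices of $\Sigma(G)$ have a common neighbour whenever $\widehat{Q}(G)<1/2$. Since the character table of $\mathbb{M}$ (with power maps) is stored in the \textsf{GAP} Character Table Library \cite{CTblLib}, each fixed point ratio reduces to a computation of $|x^G\cap H|$ from the class fusion of $H$ in $\mathbb{M}$.

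The first step would be to enumerate the candidates for $H$. By \cite{Wil} there are $44$ known conjugacy classes of maximal subgroups of $\mathbb{M}$, and any further maximal subgroup must be almost simple with socle ${\rm L}_2(13)$ or ${\rm L}_2(16)$. Intersecting with the list of base-two primitive actions from \cite{BOW} yields the subgroups that actually need to be treated. For any hypothetical additional maximal $H$ with socle ${\rm L}_2(13)$ or ${\rm L}_2(16)$ we have $|H|\leqs |{\rm L}_2(16).4|=16320$, and the crude bound obtained from \eqref{e:star},
\[
\widehat{Q}(G) \leqs \frac{|H|^2 \max_{1\ne x\in H}|C_G(x)|}{|G|},
\]
is astronomically smaller than $1/2$, so these putative cases are immediate.

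For the $44$ known maximal subgroups I would process them one by one using the stored character tables and class fusions in \textsf{CTblLib}. When the fusion of $H$ into $\mathbb{M}$ is not already installed, I would invoke \textsf{PossibleClassFusions} together with the \textsf{GAP} power map and element order information and take $\widehat{Q}(G)$ to be the maximum over the surviving candidates; experience from related computations in \cite{BOW} suggests that this almost always pins the fusion down uniquely and, in any case, produces a number strictly less than $1/2$ for the subgroups outside conclusion (ii). Each successful bound $\widehat{Q}(G)<1/2$ delivers conclusion (i) via Lemma~\ref{l:q2}.

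The principal obstacle is that the probabilistic bound genuinely fails for the large $p$-local maximal subgroups when $p\in\{2,3\}$. Such subgroups (for example $2^{1+24}.{\rm Co}_1$, $2^{2}.{}^{2}E_{6}(2){:}S_3$, $3^{1+12}.2{\rm Suz}{:}2$ or $3.{\rm Fi}_{24}$) are enormous and contain many elements of order $2$ or $3$ whose centraliser in $\mathbb{M}$ is very large; already the contribution of a single such class of involutions satisfies
\[
|x^G|\cdot{\rm fpr}(x,G/H)^2 = \frac{|x^G\cap H|^2}{|x^G|}>\tfrac12,
\]
so no fixed point ratio estimate of the present form can succeed. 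This is precisely why these subgroups are quarantined in conclusion (ii), and establishing conjecture \ref{c:diam2} for them would require a constructive analysis of the regular $H$-orbits rather than the character-theoretic bound used here. The expectation, to be confirmed by running the enumeration above, is that every base-two maximal $H$ of $\mathbb{M}$ which is neither a $2$-local nor a $3$-local subgroup satisfies $\widehat{Q}(G)<1/2$, giving the dichotomy in the statement.
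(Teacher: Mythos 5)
Your proposal matches the paper's argument: the authors likewise establish this result by computing $\widehat{Q}(G)$ from the stored character table of $\mathbb{M}$ and the class fusions of its known maximal subgroups (resorting to \textsf{PossibleClassFusions} where the fusion is not installed), invoking Lemma \ref{l:q2} whenever $\widehat{Q}(G)<1/2$, and relegating the $2$-local and $3$-local subgroups, where the bound fails, to conclusion (ii). Your additional remarks on the hypothetical maximal subgroups with socle ${\rm L}_2(13)$ or ${\rm L}_2(16)$ and on why the $p$-local cases defeat the fixed-point-ratio method are consistent with, and slightly more explicit than, the paper's brief discussion.
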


\section{Problems}\label{ss:op}

In this final section we present a number of open problems. Unless stated otherwise, $G \leqs {\rm Sym}(\O)$ is a finite transitive permutation group of degree $n$ with point stabiliser $H$ and base size $2$. 

\subsection{Connectivity}

\begin{problem}\label{pr:prim}
Prove Conjecture \ref{c:diam2} for base-two primitive groups.
\end{problem}

In Sections \ref{s:sym} and \ref{s:spor}, we verified Conjecture \ref{c:diam2} for some almost simple groups and it would be desirable to push these results further. As discussed in Remark \ref{r:imp}, different methods are needed to study $\Sigma(G)$ when $G$ has socle $T = A_m$ and $H \cap T$ acts imprimitively on $\{1, \ldots, m\}$. Similarly, for sporadic groups, it might be feasible to study some of the open cases presented in Table \ref{tab:spor} using a more sophisticated computational approach.  

One of our main tools for verifying Conjecture \ref{c:diam2} is Lemma \ref{l:q2}, which immediately gives the desired result if ${\rm val}(G)>n/2$. Therefore, it would be interesting to study the connectivity properties of the Saxl graphs of primitive groups $G$ for which this approach is not available, such as the groups highlighted in Section \ref{ss:asymp}, where ${\rm val}(G)/n \to 0$ as $|G|$ tends to infinity. In this setting, we have only verified the conjecture for the groups $G = {\rm PGL}_{2}(q)$ in Example \ref{ex:pgl2q} (and a handful of cases in Example \ref{c:pex}).

\begin{problem}\label{pr:cn}
Investigate the asymptotic behaviour of the clique number $\omega(G)$ for base-two primitive groups $G$. For example, does $\omega(G) \to \infty$ as $|G| \to \infty$?
\end{problem}

As noted in Lemma \ref{l:q2}, if $Q(G,2) \to 0$ as $|G| \to \infty$, then $\omega(G)$ does tend to infinity with $|G|$. Does this asymptotic behaviour extend to other families of primitive groups $G$? Again, it would be interesting to consider the groups $G$ with the property that $Q(G,2) \to 1$ as $|G| \to \infty$. For instance, if we take $G = {\rm PGL}_{2}(q)$ as in Example \ref{ex:pgl2q}, then $\Sigma(G)$ is the Johnson graph $J(q+1,2)$ and thus $\omega(G)=q$ does tend to infinity with $q$.

In Section \ref{s:con}, we observed that the Saxl graphs of base-two imprimitive groups can exhibit a wide range of connectivity properties, from having arbitrarily many connected components, to being connected of any given diameter. The groups we constructed in the proofs of Propositions \ref{p:c2} and \ref{p:c3} are very far from primitive, so it might be fruitful to focus on base-two imprimitive groups that are in some sense close to being primitive. For example, the \emph{quasiprimitive} groups are a natural family to consider (recall that a permutation group is quasiprimitive if every non-trivial normal subgroup is transitive). We have studied the Saxl graphs of some small simple quasiprimitive groups -- in every example, $\Sigma(G)$ is connected, but there are cases with ${\rm diam}(G)=3$, such as $(G,H) = (A_8, A_4 \times 2^2)$ and $({\rm M}_{12}, S_5)$. (Note that in the latter case, ${\rm M}_{12}$ has four conjugacy classes of subgroups isomorphic to $S_5$; for representatives of two of these classes, the corresponding Saxl graph has diameter $3$; in the other two cases, the diameter is $2$.)

\begin{problem}\label{pr:qp}
Study the connectivity properties of $\Sigma(G)$ for interesting families of base-two imprimitive groups, such as quasiprimitive groups. 
\end{problem}

\subsection{Automorphisms}

\begin{problem}\label{pr:aut}
Study the automorphism group of $\Sigma(G)$. For example, when do we have $G = {\rm Aut}(\Sigma(G))$? 
\end{problem}

Let $G \leqs {\rm Sym}(\O)$ be a primitive almost simple group with socle $T$ and observe that 
\[
G \leqs {\rm Aut}(\Sigma(G)) \leqs {\rm Sym}(\O).
\]
If we exclude a list of known exceptions (see Tables II--VI in \cite{LPS}), then any overgroup of $G$ in $\Sym(\Omega)$ not containing $\Alt(\Omega)$ also has socle $T$. For these groups, this essentially reduces Problem \ref{pr:aut} to determining whether or not all bases of size $2$ for $G$ are also bases for the almost simple overgroups of $G$ in ${\rm Sym}(\O)$.

\begin{example}
Let $G = {\rm Sp}_{2m}(2)$, where $m \geqs 6$ is even. Then $G$ has a maximal subgroup $H=S_{2m+2}$, which is embedded via the fully deleted permutation module for $H$ over $\mathbb{F}_2$ (the maximality of $H$ follows from \cite[Lemma 2.6]{BLS2}, for example, and we refer the reader to the proof of Proposition \ref{p:spb} for an explicit description of the embedding). Consider the action of $G$ on $\O = G/H$ and note that $b(G)=2$ by  \cite[Proposition 3.1]{BGS}. Let $\Sigma(G)$ be the corresponding Saxl graph. By inspecting \cite[Tables II--VI]{LPS}, we deduce that ${\rm Aut}(\Sigma(G))$ is an almost simple group with socle $G$. But ${\rm Aut}(G) = G$ and thus $G = {\rm Aut}(\Sigma(G))$.
\end{example}

\begin{example}
Consider the family of primitive groups $G \leqs {\rm Sym}(\O)$ in Example \ref{ex:johnson}. Here $G = {\rm PGL}_{2}(q)$ and $\Sigma(G)$ is the Johnson graph 
$J(q+1,2)$. Therefore 
$${\rm Aut}(\Sigma(G)) = S_{q+1}$$ 
and thus $G< {\rm Aut}(\Sigma(G))$.
Similarly, if we take the affine group $G = \mathbb{F}_{q^2}{:}D_{q+1}$ in Example \ref{ex:payley}, where $q=p^f$ for an odd prime $p$, then $\Sigma(G)$ is isomorphic to the Paley graph $P(q^2)$. Therefore  
\[
{\rm Aut}(\Sigma(G)) = \mathbb{F}_{q^2}{:}C_{(q^2-1)/2}{:}C_{2f},
\]
which is an index-two subgroup of the affine semilinear group ${\rm A\Gamma L}_{1}(q^2)$ (see \cite[Theorem 9.1]{J}). In particular, $G < {\rm Aut}(\Sigma(G))$ in this case.
\end{example}

\begin{example}
Let $L \leqs {\rm Sym}(\Delta)$ be a base-two permutation group with at least two regular suborbits. Consider the product action of $L \Wr P$ on $\O = \Delta^p$, where $P = C_p$ and $p$ is a prime. The stabiliser in $P$ of any partition of $\{1,\ldots,p\}$ into at least two parts is trivial, 
so Corollary \ref{cor:base2} implies that $b(L\Wr P)=2$. More precisely, Lemma \ref{lem:findbase} implies that $\{(\alpha_1,\ldots,\alpha_p),(\beta_1,\ldots,\beta_p)\}$ is 
an edge in $\Sigma(L\Wr P)$ if and only if each $\{\alpha_i,\beta_i\}$ is a base for $L$  and there exists $i,j$ such that $(\alpha_i,\beta_i)$ and $(\alpha_j,\beta_j)$ lie in different $L$-orbits. Therefore,
\[
L \Wr P \leqs L\Wr S_p\leqslant \Aut(\Sigma(L\Wr P)).
\]
In particular, if $p$ is odd then $L \Wr P < \Aut(\Sigma(L\Wr P))$ (even if $L = \Aut(\Sigma(L))$).
\end{example}

\begin{problem}\label{pr:rec}
To what extent does $\Sigma(G)$ determine $G$ up to permutation isomorphism? 
\end{problem}

Recall that if $G$ is a Frobenius group then $\Sigma(G)$ is complete, so we can find arbitrarily many distinct base-two groups (up to permutation isomorphism) with the same  Saxl graph. Moreover, as noted in Section \ref{s:obs}, if $L$ is a group of order $n$ acting regularly on itself then the Saxl graph for $L \times D_{2m}$ acting on $nm$ points is the complete multipartite graph with $m$ parts of size $n$. Therefore, there are arbitrarily many non-Frobenius groups with the same Saxl graph, and it would be interesting to construct further  examples. Note that this is related to Problem \ref{pr:aut}: if $G$ and $H$ are non-isomorphic groups with the same Saxl graphs, then $G$ and $H$ are subgroups of ${\rm Aut}(\Sigma(G))$ and thus $G \ne {\rm Aut}(\Sigma(G))$. 

\subsection{Cycles}

\begin{problem}\label{pr:eul}
Study the  primitive base-two groups $G$ such that $\Sigma(G)$ is non-Eulerian.  
\end{problem}

By Proposition \ref{p:eulprim}, if $\Sigma(G)$ is non-Eulerian and $G$ is insoluble, then $G \leqs L \Wr P$ in its product action, where $L$ is one of the almost simple primitive groups described in Table \ref{tab:odd} and $P \leqs S_k$ is transitive of odd order. We have found one genuine example, namely $(G,H) = ({\rm M}_{23}, 23{:}11)$. Are there any others?

\begin{problem}\label{pr:ham}
Let $G$ be a finite transitive permutation group of degree $n>2$ with connected Saxl graph $\Sigma(G)$. Prove that $\Sigma(G)$ is Hamiltonian. 
\end{problem}

As discussed in Remark \ref{r:ham}, this is a refinement of a well known conjecture of Lov\'{a}sz, which asserts that every finite connected vertex-transitive graph is Hamiltonian, except for $5$ specific graphs. We have verified the conjecture in some special cases and it would be interesting to seek further results in this direction.

\subsection{Other invariants}

\begin{problem}\label{pr:chrom}
Investigate the chromatic number of $\Sigma(G)$. 
\end{problem}

Recall that if $\Gamma$ is a finite graph, then the \emph{chromatic number} of $\Gamma$, denoted by $\chi(\Gamma)$, is the minimal number of colours needed to colour the vertices of $\Gamma$ so that no two adjacent vertices have the same colour. For any $\Gamma$, we have 
$\omega(\Gamma) \leqs \chi(\Gamma)$. Write $\chi(G)$ for the chromatic number of 
$\Sigma(G)$ and note that this is the smallest number of parts in a partition of $\Omega$ such that no part contains a base of size $2$. Since we have already noted that $\omega(G)$ can be arbitrarily large when $G$ is primitive, the same conclusion holds for $\chi(G)$.

\begin{problem}\label{pr:tdn}
Study the total domination number of $\Sigma(G)$.
\end{problem}

The \emph{total domination number} of $\Sigma(G)$, denoted by $\gamma(G)$, is the minimal size of a subset $\L$ of $\O$ with the property that for each $\a \in \O$ there exists $\b \in \L$ such that $\{\a,\b\}$ is a base for $G$ (similarly, the \emph{domination number} is the size of the smallest subset $\L$ satisfying the weaker condition that for each $\a \in \O\setminus \L$ there exists $\b \in \L$ such that $\{\a,\b\}$ is a base). Clearly, if $G$ is non-regular then $\gamma(G) \geqs 2$, and equality holds if $G$ is Frobenius (in which case, $\Sigma(G)$ is complete). We also observe that $\gamma(G)\cdot {\rm val}(G) \geqs n$. To see this, suppose $\gamma(G)=k$ and let $\Lambda$ be a total dominating set of size $k$. There are $k\cdot {\rm val}(G)$ ordered pairs $(\alpha,\beta)$ such that $\alpha\in \Lambda$ and $\beta$ is adjacent to $\alpha$. On the other hand, for each $\beta\in \Omega$, there exists $\alpha\in \Lambda$ such that $(\alpha,\beta)$ is one of these pairs, so $k\cdot {\rm val}(G)\geqs n$ as claimed.

It is natural to seek bounds on $\gamma(G)$ for some families of primitive groups, and it might also be interesting to study the groups with 
$\gamma(G)=2$. For instance, consider the action of $G = {\rm GL}_{2}(q)$ on the set 
$\O$ of nonzero vectors in $\mathbb{F}_q^2$ (see Example \ref{ex:1}). Fix a pair of vectors $u,v \in \O$ with $v \not\in \la u \ra$. Then for any $w \in \O$, either $\{u,w\}$ or $\{v,w\}$ is a basis for $\mathbb{F}_q^2$, and hence a base for $G$, so $\gamma(G)=2$.

\begin{problem}\label{pr:ind}
Study the independence number of $\Sigma(G)$.
\end{problem}

The \emph{independence number} of a graph $\Gamma$, denoted by $\a(\Gamma)$, is the maximal size of a subset of the vertices of $\Gamma$ with the property that no two vertices in the subset are joined by an edge (equivalently, $\a(\Gamma)$ is the clique number of the complement of $\Gamma$). Let us write $\a(G)$ for the independence number of $\Sigma(G)$, which  is the size of the largest subset of $\O$ such that no pair of elements forms a base. 

It is easy to see that there are imprimitive base-two groups $G$ such that $\a(G)$ is arbitrarily large. For instance, if we take $G$ to be the group constructed in the proof of Proposition \ref{p:c3}, then $G$ has blocks of imprimitivity of size $p^n$, each of which is a co-clique in $\Sigma(G)$ and thus $\a(G) \geqs p^n$. We can also find primitive groups with this property. For instance, Paley graphs are self-complementary and thus $\alpha(G)=\omega(G)$ for the group $G = V{:}D_{q+1}$ in Example \ref{ex:payley}. Moreover, \cite{BDR} gives $\a(G)=q$ and thus $\a(G)$ can be arbitrarily large. Notice that this latter family of primitive groups shows that both the clique and independence number of $\Sigma(G)$ can be arbitrarily large.  The Saxl graphs in Example \ref{ex:pgl2q} have the same property: here $G = {\rm PGL}_{2}(q)$, $\omega(G) = q$ and $\a(G) = \lfloor (q+1)/2 \rfloor$.

In order to study $\a(G)$, we introduce a new parameter $B(G)$, which may be of independent interest. A base $\Lambda$ for $G$ is \emph{minimal} if no proper subset of $\Lambda$ is a base. Let $B(G)$ be the maximal size of a minimal base. For example, $B(G)=1$ if and only if $G$ is regular, and $B(G)=2$ if $G$ is Frobenius. Clearly, we have 
\[
2^{B(G)} \leqs |G| \leqs n^{b(G)}
\]
and thus 
\begin{equation}\label{e:BG}
B(G)\leqslant b(G)\log_2 n.
\end{equation}
In \cite{Cameron}, Cameron studies a related parameter,  namely the maximum size of a minimal base over all permutation representations of a group $G$.

Following \cite{Cameron, CF}, let us say that an ordered base $(\a_1, \ldots, \a_k)$ for $G$ is \emph{irredundant} if no point is fixed by the pointwise stabiliser of its predecessors. Notice that a base is minimal if and only if it is irredundant with respect to all possible orderings. Irredundant bases are studied by Blaha \cite{blaha} (he uses the term \emph{nonredundant} base) and in the proof of \cite[Lemma 4.2]{blaha} he constructs an intransitive permutation group $G$ of degree $n$ with an irredundant base of size at least $\frac{1}{3}b(G)\log_2n$. It is straightforward to check that Blaha's irredundant base is minimal in the sense defined above, so this gives a group $G$ with 
$$B(G)\geqslant \frac{1}{3}b(G)\log_2n$$ 
and thus the upper bound in \eqref{e:BG} is essentially best possible. More generally, \cite[Theorem 2.4]{CF} states that all irredundant bases for $G$ are minimal bases if and only if they are the bases of a matroid; the permutation groups with this property are called \emph{IBIS groups}, which is an acronym for ``Irredundant Bases of Invariant Size".

Notice that if $G$ is non-regular and non-Frobenius, then   
$$\a(G) \geqs B(G) \geqs 2$$
and thus a lower bound on $B(G)$ yields a lower bound on $\a(G)$. 

\begin{remark}
It is easy to construct examples such that $\a(G)-B(G)$ is arbitrarily large. For example, take the group $G = {\rm GL}_{2}(q)$ in Example \ref{ex:1} acting on the set $\O$ of nonzero vectors in the natural module $\mathbb{F}_q^2$. Here $\{\a,\b\}$ is a base for $G$ if and only if $\a$ is not a scalar multiple of $\b$. Therefore, the set of $q-1$ scalar multiples of a fixed nonzero vector is a maximal independent set in $\Sigma(G)$ and thus $\a(G)=q-1$. However, a subset of $\O$ is a base for $G$ if and only if it contains a pair of linearly independent vectors, so $B(G)=2$.
\end{remark}

Finally, we construct a family of base-two primitive groups $G$ with the property that $B(G)$ can be arbitrarily large. 

\begin{proposition}\label{p:spb}
Let $G = {\rm Sp}_{2m}(2)$, where $m \geqs 8$ is even, and consider the action of $G$ on $\O = G/H$, where $H = S_{2m+2}$ is embedded via the fully deleted permutation module over $\mathbb{F}_2$. Then $b(G)=2$ and $B(G)\geqslant (4m+2)/5$.
\end{proposition}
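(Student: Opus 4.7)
The equality $b(G)=2$ is \cite[Proposition~3.1]{BGS}, so the task is to exhibit a minimal base of size at least $(4m+2)/5$. I would begin by making the embedding $H\hookrightarrow G$ concrete. Writing $V_0=\{(x_1,\ldots,x_{2m+2})\in\mathbb{F}_2^{2m+2}:\sum x_i=0\}$ and $J=(1,1,\ldots,1)$, the symplectic space is $V=V_0/\langle J\rangle$, with the non-degenerate alternating form inherited from the standard bilinear form on $\mathbb{F}_2^{2m+2}$. Then $H=S_{2m+2}$ is the set-wise stabiliser in $G$ of the ``standard frame'' $F_0=\{\bar e_1,\ldots,\bar e_{2m+2}\}\subset V$. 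Consequently, points $\alpha\in\O$ correspond bijectively to $G$-translates $F_\alpha$ of $F_0$, and for any $S\subseteq\O$ the pointwise stabiliser $G_S$ equals the joint set-wise stabiliser of the family of frames $\{F_\alpha:\alpha\in S\}$.

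With this dictionary in hand, building a minimal base becomes a combinatorial problem about frames. I would partition $\{1,\ldots,2m+2\}$ into blocks of size $5$---the hypothesis that $m$ is even being used to arrange the partition and handle a small residual block---and for each block $B$ choose two involutions $g_{B,1},g_{B,2}\in G$ supported on $B$ (concretely, products of symplectic transvections $t_v$ with $v$ in the span of $\{\bar e_i:i\in B\}$). Setting $F_{B,s}=g_{B,s}\cdot F_0$, each such frame agrees with $F_0$ away from $B$, so its stabiliser in $G$ already contains the full symmetric group permuting $\{\bar e_i:i\notin B\}$. This locality makes joint stabilisers decouple across distinct blocks, reducing the entire minimality analysis to what happens inside a single block.

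I would then verify two properties: (a) with two alterations per block the local joint stabiliser is trivial, so the global intersection is trivial; and (b) deleting any single $F_{B,s}$ leaves its block with only one remaining alteration, whose local joint stabiliser with $F_0$ contains a non-identity symplectic element that extends trivially to the other blocks. The factor $4/5$ then arises naturally: two base points per block of five indices give $2\cdot\lfloor(2m+2)/5\rfloor$ elements, with a minor correction on the residual block accounting for the ``$+2$'' in the numerator. The main obstacle will be (b): choosing the involutions $g_{B,1},g_{B,2}$, both within each block and compatibly across blocks, so that a genuine non-identity element of $\Sp_{2m}(2)$ survives every possible single deletion. I expect this to reduce to a finite configuration check inside a single block of size $5$, but pinning down a universal choice of involutions that passes this check---and confirming non-triviality after lifting to $\Sp_{2m}(2)$---is the delicate heart of the argument.
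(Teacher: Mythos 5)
Your proposal correctly disposes of $b(G)=2$ by citing \cite[Proposition 3.1]{BGS}, which is exactly what the paper does. For the lower bound on $B(G)$, however, what you have written is a plan rather than a proof, and the plan has a genuine gap at its centre: you yourself flag that choosing the involutions $g_{B,1},g_{B,2}$ so that (a) the resulting collection of frames is a base and (b) every single deletion destroys the base property is ``the delicate heart of the argument'', and you do not carry it out. Until that configuration is exhibited and verified, no minimal base of the required size has been produced, so the inequality $B(G)\geqslant (4m+2)/5$ is not established. There are further unproved assertions along the way: the identification of points of $\O$ with ``frames'' $\{\bar e_1,\ldots,\bar e_{2m+2}\}$ is not well-founded as stated, since $e_i$ does not lie in the sum-zero subspace $V_0$ and hence $\bar e_i$ is not an element of $V_0/\langle J\rangle$ (the objects genuinely permuted by $S_{2m+2}$ inside the module are the images of the vectors $e_i+e_j$); and the claim that joint stabilisers ``decouple across distinct blocks'' is asserted from the observation that each $H^{g_{B,s}}$ contains $\mathrm{Sym}$ of the complement of $B$, which controls only part of the intersection $\bigcap_{B,s} H^{g_{B,s}}\cap H$.

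The paper's argument is quite different and avoids all of this explicit construction. It takes $K=\mathrm{Stab}_H(\{1,2,3,4\})\cong S_4\times S_{2m-2}$, locates a nonzero vector $e$ with $K\leqslant G_{\langle e\rangle}$, and uses the central transvection $z$ of the parabolic $G_{\langle e\rangle}$ together with the maximality of $K$ in $H$ to show $H\cap H^z=K$. Hence $G$ has a suborbit $\Delta$ on which $H$ acts as $S_{2m+2}$ on $4$-element subsets of $\{1,\ldots,2m+2\}$, and Halasi's theorem \cite[Theorem 3.2]{halasi} gives the exact base size $\lceil (4m+2)/5\rceil$ for that action; adjoining the fixed point $\a$ with $G_\a=H$ then yields $B(G)\geqslant b(H)\geqslant (4m+2)/5$. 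If you want to salvage your approach you would need to actually produce and check the block configuration (most plausibly by reproving a version of Halasi's combinatorial analysis by hand), which is precisely the work the paper outsources to \cite{halasi}.
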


\begin{proof}
To realise the embedding of $H$ in $G$, let $V$ be the $(2m+2)$-dimensional permutation module for $H$ over $\mathbb{F}_2$ and let $W$ be the $H$-invariant subspace
$$W=\{(v_1,\ldots,v_{2m+2}) \,:\, \sum_i v_i=0\}.$$
Then
$$\b((u_1,\ldots,u_{2m+2}),(v_1,\ldots,v_{2m+2}))=\sum_i u_iv_i$$
is an $H$-invariant alternating form on $W$, which induces a symplectic form $\b'$ on $W/W^\perp$, where $W^\perp=\langle (1,1,\ldots,1)\rangle$. This explains the embedding of $H$ in $G$. Moreover, $H$ is a maximal subgroup by \cite[Lemma 2.6]{BLS2} and $b(G)=2$ by \cite[Proposition 3.1]{BGS}.

Set $K={\rm Stab}_{H}(\{1,2,3,4\})\cong S_4\times S_{2m-2}$ and $e=(1,1,1,1,0,0,\ldots,0) + W^{\perp} \in W/W^{\perp}$. Then $K \leqs G_{\la e \ra}$, which is a $P_1$ parabolic subgroup of $G$. Note that $Z(G_{\la e \ra}) = \la z \ra$, where $z \in G$ is the transvection that fixes $e$ and maps $f$ to $e+f$, where $f \in W/W^{\perp}$ is a vector such that $\b'(e,f)=1$. Since $K$ is maximal in $H$ and $Z(K)=1$, it follows that $z \notin H$ and $H\cap H^z=K$. Therefore, $G$ has a suborbit $\Delta$ such that the action of $H$ on $\Delta$ is equivalent to the action of $H$ on the set of $4$-element subsets of $\{1, \ldots, 2m+2\}$. Since $2m+2 \geqs 4^2$, a result of Halasi \cite[Theorem 3.2]{halasi} implies that the latter action of $H$ has base size $\lceil  (4m+2)/5 \rceil$ and thus
$$B(G) \geqs b(H) \geqs (4m+2)/5$$
as required.
\end{proof}

\subsection{Infinite permutation groups}

\begin{problem}\label{pr:inf}
Study the Saxl graphs of infinite base-two permutation groups.
\end{problem}

Let $G$ be a group and recall that a subgroup $H$ of $G$ is \emph{malnormal} if $H \cap H^g = 1$ for all $g \in G \setminus H$. In  geometric group theory, infinite groups with non-trivial malnormal subgroups arise naturally in many different contexts (see \cite{Harpe} for a survey of  interesting examples). If $H \ne 1$ is malnormal, then we can view $G$ as a Frobenius permutation group on the set of cosets of $H$ and the corresponding Saxl graph $\Sigma(G)$ is complete. 

In a different direction, the study of bases for actions of algebraic groups was initiated by 
Burness, Guralnick and Saxl in \cite{BGSa}. Let $G$ be an algebraic group over an algebraically closed field $K$ of characteristic $p \geqs 0$, let $H$ be a closed subgroup of $G$ and consider the action of $G$ on the coset variety $\O = G/H$, which is equipped with the Zariski topology. Let us assume that the action of $G$ on $\O$ is faithful and let $b(G)$ be the base size of $G$. The \emph{generic base size} of $G$, denoted by $b^1(G)$, was introduced in \cite{BGSa}. This is defined to be the minimal $k \in \mathbb{N}$ such that the Cartesian product $\O^k$ contains a non-empty open subvariety $\Lambda$ with the property that $G_{\a_1, \ldots, \a_k}=1$ for all $(\a_1, \ldots, \a_k) \in \Lambda$. Note that $b(G) \leqs b^1(G)$.

\begin{example}
Consider the action of $G = {\rm GL}_{2}(K)$ on the set $\O$ of nonzero vectors in the natural module. Then $\{\a,\b\}$ is a base for $G$ if and only if $\a$ and $\b$ are linearly independent. Since this is an open condition (given by the non-vanishing of a determinant), it follows that $b(G)=b^1(G)=2$.
\end{example}

Write $H=G_{\a}$ and assume $b^1(G)=2$. This means that $\O$ contains a non-empty open subvariety $\Gamma$ such that $\{\a,\gamma\}$ is a base for $G$ for all $\gamma \in \Gamma$. In particular, we can associate such an open set to each vertex in $\Sigma(G)$. In the Zariski topology, any two non-empty open subsets of $\O$ have non-empty intersection and thus any two vertices in $\Sigma(G)$ have a common neighbour. In particular, $\Sigma(G)$ has diameter at most $2$. Detailed results on $b(G)$ and $b^1(G)$ are established in \cite{BGSa} in the case where $H$ is a maximal subgroup of a simple algebraic group $G$ (that is, when the action of $G$ on $\O$ is primitive). Typically, if $b(G)=2$ then we also have $b^1(G)=2$, although there are some exceptions. For the groups with $b(G)=b^1(G)=2$, we see that the algebraic group analogue of Conjecture \ref{c:diam2} holds.

By \cite[Proposition 2.5(iv)]{BGSa}, if $b(G) = 2$ then $b^1(G) \in \{2,3\}$ and it would be  interesting to focus on the corresponding version of Conjecture \ref{c:diam2} for base-two algebraic groups with $b^1(G)=3$. 
For example, \cite[Theorem 8]{BGSa} states that this situation arises when $p \ne 2$ and $H=C_G(\tau)$ for some involutory automorphism $\tau$ of $G$ that inverts a maximal torus (for instance, $(G,H) = (E_8,D_8)$ is such an example). Moreover, in these cases, each point stabiliser has a unique regular orbit on $\O$, so the problem reduces to showing that the intersection of any two of these regular orbits is non-empty. In general, it is not clear what the answer should be in this situation. 

\begin{remark}
Consider the special case where $G = {\rm PGL}_{2}(K)$ and $H=N_G(T)$ is the normaliser of a maximal torus $T$ of $G$. Here $b^1(G)=3$ and $b(G)=2$ by \cite[Theorem 9]{BGSa}. Notice that this is an infinite analogue of the set-up we studied in Example \ref{ex:johnson}. In particular, we can identify $\O=G/H$ with the variety of pairs of $1$-dimensional subspaces of $K^2$ and we see that two vertices in $\Sigma(G)$ are adjacent if and only if they have a common $1$-space. It follows that any two points in $\Sigma(G)$ have a common neighbour, so the natural analogue of Conjecture \ref{c:diam2} is true in this case. 
\end{remark}

\end{document}